\DeclareMathSymbol{\shortminus}{\mathbin}{AMSa}{"39}
\newcommand{\sq}{\mathrm{sq}}
\newcommand{\ex}{\mathrm{ex}}
\newcommand{\dR}{\mathrm{dR}}
\newcommand{\cl}{\mathrm{cl}}
\newcommand{\fX}{\mathfrak{X}}
\newcommand{\N}{\mathbb{N}}
\newcommand{\Z}{\mathbb{Z}}
\newcommand{\R}{\mathbb{R}}
\newcommand{\Lie}{\mathrm{Lie}}
\newcommand{\Div}{\mathrm{div}}
\newcommand{\Diff}{\mathrm{Diff}}
\newcommand{\supp}{\mathrm{supp}}
\newcommand{\oline}{\overline}
\newcommand{\cD}{\mathcal{D}}
\newcommand{\cG}{\mathcal{G}}
\newcommand{\dd}{{\tt d}}
\newcommand{\fg}{\mathfrak{g}}
\newcommand{\fL}{\mathfrak{L}}
\newcommand{\U}{\mathrm{U}}
\newcommand{\ceX}{\fX_{\mathrm{c}\shortminus\mathrm{ex}}}
\newtheorem{Theorem}{Theorem}[section]
\newtheorem{Lemma}[Theorem]{Lemma}
\newtheorem{Proposition}[Theorem]{Proposition}
\newtheorem{Corollary}[Theorem]{Corollary}
\theoremstyle{definition}
\newtheorem{Definition}[Theorem]{Definition}
\newtheorem{Remark}[Theorem]{Remark}
\title{Universal central extension of the Lie algebra of exact divergence-free vector fields}
\author{
Bas Janssens\thanks{Delft University of Technology, The Netherlands. \texttt{B.Janssens@tudelft.nl}
},
Leonid Ryvkin\thanks{
Université Claude Bernard Lyon 1, France and University of Göttingen, Germany. \texttt{ryvkin@math.univ-lyon1.fr}
},
Cornelia Vizman\thanks{
West University of Timi\c soara, Romania. \texttt{cornelia.vizman@e-uvt.ro}
}
}
\begin{document}

\maketitle

\begin{abstract}
We construct the universal central extension of the Lie algebra of exact divergence-free vector fields, proving a conjecture by Claude Roger from 1995. The proof relies on the analysis of a Leibniz algebra that underlies these vector fields. 
As an application, we construct the universal central extension of the (infinite-dimensional) Lie group of exact divergence-free diffeomorphisms of a compact 3-dimensional manifold.
\end{abstract}

\tableofcontents

\newpage

\section{Introduction}

The Lie algebra $\fX(M,\mu)$ of divergence-free vector fields on a smooth manifold $M$ with a volume form $\mu$ is one of the four 
classical infinite-dimensional Lie algebras studied by \'{E}.\ Cartan~\cite{Cartan1909, SingerSternberg1965}, the other three being the Lie algebras of symplectic vector fields, of contact vector fields, and of all vector fields.
The goal of this article is to classify the continuous central extensions of $\fX(M,\mu)$, or, equivalently, to determine the second continuous Lie algebra cohomology $H^2(\fX(M,\mu),\R)$.

In order to do so, we study the Lie algebra $\fX_{\ex}(M,\mu)$ of exact divergence-free vector fields. 
The study of this Lie algebra goes back at least to~\cite{Lichnerowicz1974}, where it is identified as a perfect commutator ideal of the Lie algebra 
$\mathfrak X(M,\mu)$ of all divergence-free vector fields. This is analogous to the Lie algebra $\fX_{\mathrm{Ham}}(M,\omega)$ of Hamiltonian vector fields for a symplectic form $\omega$, which is a perfect commutator ideal in the Lie algebra of all symplectic vector fields \cite{ALDM74}.

The following central extension of $\fX_{\ex}(M,\mu)$ appears in Ismagilov~\cite{Ismagilov1980}, and is usually attributed to Lichnerowicz. 
It was conjectured to be universal by Roger~\cite{Roger1995}, and our main result confirms this conjecture.

Recall that a vector field $X$ on $M$ is divergence-free if $L_{X}\mu = 0$ or, equivalently, if $\iota_{X}\mu$ is closed. It is exact divergence-free if $\iota_{X}\mu$ is exact, and $\alpha \in \Omega^{n-2}(M)$ is called a potential for $X$ if $\iota_{X}\mu = d\alpha$. We denote the unique vector field with potential $\alpha$ by 
$X_{\alpha}$.
On the space $\oline{\Omega}{}^{n-2}(M) := \Omega^{n-2}(M)/d\Omega^{n-3}(M)$ of potentials modulo exact potentials, we define a Lie bracket by 
$[\oline{\alpha}, \oline{\beta}] = \oline{i_{X_{\alpha}} i_{X_{\beta}}\mu}$.
This makes $\oline{\alpha} \mapsto X_{\alpha}$
into a surjective Lie algebra homomorphism 
$\oline{\Omega}{}^{n-2}(M) \rightarrow \fX_{\ex}(M)$, 
and its kernel $H^{n-2}_{\dR}(M)$ is precisely the centre of $\oline{\Omega}{}^{n-2}(M)$.
Our main result is that for $\mathrm{dim}(M) \geq 3$,
 the central extension
\begin{equation}\label{hof1}
H^{n-2}_{\dR}(M) \rightarrow \oline{\Omega}{}^{n-2}(M) 
\rightarrow \fX_{\ex}(M,\mu)
\end{equation}
is universal 
in the category of locally convex Lie algebras. 
In fact we will prove the slightly stronger version for compactly supported potentials, from which the above result easily follows.

Proving that \eqref{hof1} is universal is of course equivalent to showing that $H^2(\fX_{\ex}(M,\mu), \R) = H_{n-2}(M,\R)$.
For the Lie algebra $\fX(M,\mu)$ of all divergence-free vector fields (which does not, in general, admit a universal extension),
one then obtains 
$H^2(\fX(M,\mu), \R) = H_{n-2}(M,\R) \oplus \wedge^2 H_{n-1}(M,\R)$.

Compared with the universal central extension of $\mathfrak X_{\mathrm{Ham}}(M,\omega)$ in our previous work \cite{JV16}, 
one of the main difficulties of working with $\mathfrak X_{\ex}(M,\mu)$ is that neither $\mathfrak X_{\ex}(M,\mu)$ nor 
$\oline{\Omega}{}^{n-2}(M)$ are projective modules over the ring $C^{\infty}(M)$ of smooth functions. This makes it difficult to work with differential operators. 
We resolve this problem by shifting focus to the projective $C^{\infty}(M)$-module $\Omega^{n-2}(M)$.
Although this is no longer a Lie algebra, it is still a (left) Leibniz algebra with the bracket $[\alpha,\beta] = L_{X_{\alpha}}\beta$, and as a Leibniz algebra, it has natural projections to $\oline{\Omega}{}^{n-2}(M)$ and $\mathfrak X_{\ex}(M,\mu)$.

If $M$ is compact, then the group $\Diff_{\ex}(M,\mu)$ of exact volume preserving diffeomorphisms is a Fr\'echet--Lie group with Lie algebra $\fX_{\ex}(M,\mu)$. 
Although we will not touch on this topic in the present paper,
the classification of continuous central extensions of $\fX_{\ex}(M,\mu)$ is intricately linked to projective unitary representation theory of $\Diff_{\ex}(M,\mu)$. Indeed, every smooth projective unitary representation of $\Diff_{\ex}(M,\mu)$ gives rise to a continuous central extension of $\fX_{\ex}(M,\mu)$ by \cite{JanssensNeeb2019}, so determining these extensions is an important first step towards a classification of projective unitary representations.

As an application of our main result, we show that (a slight adaptation of) the construction in \cite{janssensHowActionThat2024}, inspired by Ismagilov's construction~\cite{Ismagilov1980}, yields a universal central extension of $\Diff_{\ex}(M,\mu)$ in the case that $M$ is a compact, 3-dimensional manifold.

This article is structured as follows:
\begin{itemize}
    \item In Section~\ref{s:prem} we introduce the Leibniz algebra $\Omega^{n-2}(M)$. We explain the  
    relation between the Lie algebras $\mathfrak X(M,\mu)$,  $\mathfrak X_{\ex}(M,\mu)$, and
    $\oline\Omega{}^{n-2}(M)$, as well as the relations between their compactly supported versions.
    \item In Section~\ref{sec:perf} we show that the Leibniz algebra $\Omega^{n-2}(M)$ is perfect, and that its ideal of squares (i.e. its non-skew-symmetric part) 
    is the space of exact forms.
    \item In Section~\ref{sec:closed}, we use the results on $\Omega^{n-2}(M)$ to show that $\oline{\Omega}{}^{n-2}(M)$ has trivial second continuous Chevalley-Eilenberg cohomology. 
    \item In Section~\ref{sec:results}, we conclude that $\oline{\Omega}{}^{n-2}(M)$ is the universal central extension of  $\mathfrak X_{\ex}(M,\mu)$. 
    In fact, we derive this from a slightly stronger version of this result in the compactly supported setting. We also derive the continuous second Lie algebra cohomology of the Lie algebra $\fX(M,\mu)$ of all divergence-free vector fields, and of its compactly supported version $\fX_{c}(M,\mu)$.
    \item In Section~\ref{sec:LieGroups}, we construct the universal central extension of the Fréchet Lie group $\Diff_{\ex}(M,\mu)$ of exact volume-preserving 
    diffeomorphisms the case that $M$ is a three-dimensional compact manifold.
\end{itemize}
Finally, we collect a number of auxiliary results of independent interest in three appendices. Appendix~\ref{app:multivfcartan} collects some useful formulas for the Cartan calculus on multivector fields in the presence of a volume form. 
In Appendix~\ref{sec:appendixParameterPoincare} we prove a parameterised version of the compactly supported Poincar\'e Lemma. Finally, in Appendix~\ref{Appendix:PoincareLemma}, we establish a Poincar\'e Lemma for differential operators.

\paragraph*{Acknowledgements}

The authors would like to thank Claude Roger for valuable insights into the history of the problem and Peter Kristel, Karl-Hermann Neeb and Milan Niestijl for interesting discussions related to the project. The authors also thank the anonymous referee for their careful reading of the manuscript.
Part of the work was carried out during a research stay of the authors at the Erwin Schrödinger International Institute
for Mathematics and Physics, specifically during the program Higher Structures and Field-Theory. L.R. acknowledges the support of the INSMI PEPS JCJC 2024, the Accueil EC 2023-2024 of the Conseil Académique de l’UCBL, and the DFG-project Higher Lie theory.
C.V. was supported by the Romanian Ministry of Education and Research,
CNCS-UEFISCDI: project number PN-III-P4-ID-PCE-2020-2888 within
PNCDI III. B.J. was supported by the NWO grant 639.032.734 `Cohomology and representation theory of infinite dimensional Lie groups'.

\section{\texorpdfstring{The topological Leibniz algebra $\Omega^{n-2}(M)$}{The topological Leibniz algebra of forms}}\label{s:prem}

Let $M$ be a $n$-dimensional smooth manifold and $\mu$ a volume form. 
A vector field $X$ is called \emph{divergence-free} (or unimodular or volume-preserving) if $L_{X}\mu = 0$.
Since $L_X\mu=d\iota_X\mu+\iota_X d\mu=d\iota_X\mu$, this happens if and only
if $\iota_{X}\mu$ is closed. The vector field $X$ is \emph{exact divergence-free} if $\iota_{X}\mu$ is exact, that is, if it admits a \emph{potential} $\alpha \in \Omega^{n-2}(M)$ 
such that $\iota_{X}\mu = d\alpha$. 
We denote by 
$X_{\alpha}$ the unique vector field with potential $\alpha$, and the Lie algebra of exact divergence-free vector fields 
is denoted by
$
\fX_{\ex}(M,\mu) 
$.

\subsection{The non-compactly supported case}
We denote the closed forms by $\Omega^{n-2}_{\cl}(M)$. Consider the exact sequence 
\begin{equation}\label{ExactSequenceLeibniz}
\Omega^{n-2}_{\cl}(M)\longrightarrow \Omega^{n-2}(M)\stackrel{q}{\longrightarrow} \fX_{\ex}(M,\mu),
\end{equation}

where the first map is inclusion, and the second map $q \colon \Omega^{n-2}(M) \rightarrow \fX_{\ex}(M,\mu)$ 
takes $\alpha\in \Omega^{n-2}(M)$ to $X_{\alpha}$.
The bracket $[\alpha, \beta] := L_{X_{\alpha}}\beta$ on  $\Omega^{n-2}(M)$ is not skew-symmetric, however it turns $\Omega^{n-2}(M)$ into a left Leibniz algebra.

\begin{Definition}
    A left Leibniz algebra is a couple $(L,[\cdot,\cdot])$, where $L$ is a vector space and $[\cdot,\cdot]:L\times L\to L$ is a bilinear map satisfying the left Leibniz identity for all $\alpha,\beta,\gamma\in L$:
    \begin{align}\label{eq:leftleib}
        [\alpha, [\beta,\gamma]] = [[\alpha,\beta],\gamma] + 
[\beta, [\alpha,\gamma]].
    \end{align}
\end{Definition} 

Note that Leibniz algebras come in two flavours: The left Leibniz identity means that $[\alpha,\,\cdot\,]$ is a derivation, while the right Leibniz identity would mean that $[\,\cdot\,,\alpha]$ is a derivation. For Lie algebras both are equivalent and coincide with the usual Jacobi identity. We refer to \cite{FeldvossWagemann21} for an introduction to Leibniz algebras and their cohomologies.

\begin{Proposition}
The sequence \eqref{ExactSequenceLeibniz}
is a left central extension of left Leibniz algebras.
\end{Proposition}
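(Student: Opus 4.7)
The proof splits naturally into four small checks. First, I would verify that the bracket $[\alpha,\beta] := L_{X_\alpha}\beta$ is well-defined on $\Omega^{n-2}(M)$ — which it manifestly is — and that the map $q$ intertwines it with the Lie bracket on $\fX_{\ex}(M,\mu)$. The key identity is $X_{[\alpha,\beta]} = [X_\alpha, X_\beta]$, which follows from Cartan calculus:
\begin{equation*}
\iota_{[X_\alpha,X_\beta]}\mu = L_{X_\alpha}\iota_{X_\beta}\mu - \iota_{X_\beta}L_{X_\alpha}\mu = L_{X_\alpha}d\beta = d(L_{X_\alpha}\beta) = d[\alpha,\beta],
\end{equation*}
using $L_{X_\alpha}\mu=0$ and $[L_{X_\alpha},d]=0$. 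This simultaneously shows that $[\alpha,\beta]\in\Omega^{n-2}(M)$ is a potential for $[X_\alpha,X_\beta]$ and hence that $q([\alpha,\beta]) = [q(\alpha),q(\beta)]$.

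Second, the left Leibniz identity \eqref{eq:leftleib} becomes, in this concrete setting, the statement
\begin{equation*}
L_{X_\alpha}L_{X_\beta}\gamma = L_{X_{[\alpha,\beta]}}\gamma + L_{X_\beta}L_{X_\alpha}\gamma,
\end{equation*}
which is just the standard commutator formula $[L_{X_\alpha},L_{X_\beta}] = L_{[X_\alpha,X_\beta]}$ combined with the identity $X_{[\alpha,\beta]}=[X_\alpha,X_\beta]$ established above. So the Leibniz identity is essentially free once the first step is done.

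Third, the kernel of $q$ consists of those $\alpha$ with $X_\alpha=0$, and since $\iota_\cdot\mu$ is injective (as $\mu$ is a volume form) this is exactly the set where $d\alpha=0$, i.e. $\Omega^{n-2}_{\cl}(M)$. The inclusion $\Omega^{n-2}_{\cl}(M)\hookrightarrow\Omega^{n-2}(M)$ being a Leibniz homomorphism onto an abelian target is automatic.

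Finally, for left centrality one checks that if $\gamma\in\Omega^{n-2}_{\cl}(M)$ then $X_\gamma=0$, hence $[\gamma,\beta]=L_{X_\gamma}\beta = 0$ for every $\beta\in\Omega^{n-2}(M)$, so the kernel sits in the left centre. (Note that $[\beta,\gamma]=L_{X_\beta}\gamma$ need not vanish, which is why the extension is only left central, not central — a point worth flagging explicitly.) There is no real obstacle in this proposition; everything reduces to Cartan calculus, and the only place one has to be a touch careful is remembering that ``central'' has a one-sided meaning for Leibniz algebras.
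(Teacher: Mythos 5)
Your proof is correct and follows essentially the same route as the paper: the same Cartan-calculus identities show that $q$ intertwines the brackets, that the left Leibniz identity holds, and that $\ker q = \Omega^{n-2}_{\cl}(M)$ lies in the left centre. The only difference is that the paper's proof additionally establishes the converse inclusion --- every left central element lies in the kernel, shown by exhibiting at each point $p$ forms vanishing at $p$ whose exterior derivatives span $\Lambda^{n-1}T^*_pM$ --- so that the kernel is \emph{precisely} the left centre; this stronger fact is not required for the proposition as stated, so its omission is not a gap.
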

\begin{proof}
To see that $q$ covers the ordinary Lie bracket on vector fields, note that \[d(L_{X_\alpha}\beta) = d\iota_{X_{\alpha}}d\beta = d\iota_{X_{\alpha}}\iota_{X_{\beta}}\mu = L_{X_{\alpha}}\iota_{X_{\beta}}\mu = \iota_{[X_{\alpha},X_{\beta}]}\mu.\]
The left Leibniz identity \eqref{eq:leftleib} is also an immediate calculation,
\[
L_{X_{\alpha}}L_{X_{\beta}}\gamma = L_{[X_{\alpha},X_{\beta}]}\gamma + L_{X_{\beta}}L_{X_{\alpha}}\gamma.
\]
An element $\alpha \in \Omega^{n-2}(M)$
is left central if $[\alpha, \beta] = 0$ for 
all $\beta \in \Omega^{n-2}(M)$, which is the case if and only if $X_{\alpha} = 0$. Indeed, at every point $p$ there exist forms $\beta_1,...,\beta_n$ which are 0 at $p$, whose exterior derivatives at $p$ form a basis of $\Lambda^{n-1}T^*_pM$. So $(L_X\beta_{i})_p=(\iota_Xd\beta_{i})_p=0$ for all $i$ implies $X_p=0$, and 
the kernel $\Omega^{n-2}_{\cl}(M)$ of $q$ is precisely the left centre of $\Omega^{n-2}(M)$.
\end{proof}

\begin{Remark} This extension of exact divergence-free vector fields into a Leibniz algebra is mentioned in 
\cite{Roger1995}, which quotes personal communication with Loday, who in turn attributes it to Brylinski.
\end{Remark}

Since $[\alpha, \beta] + [\beta, \alpha] = d(\iota_{X_{\alpha}}\beta + \iota_{X_{\beta}}\alpha)$ is exact, 
the Leibniz algebra structure on ${\Omega^{n-2}(M)}$ induces a 
Lie algebra structure on the quotient
\[ \oline{\Omega}{}^{n-2}(M) := {\Omega^{n-2}(M)}/{d\Omega^{n-3}(M)},\]
yielding a central extension of Lie algebras
\begin{equation}\label{hof}
H^{n-2}_{\dR}(M) \rightarrow \oline{\Omega}{}^{n-2}(M) 
\rightarrow \fX_{\ex}(M,\mu).
\end{equation}

\subsection{The compactly supported case}
There is an analogous construction for the Leibniz algebra in the compactly supported setting.
We denote by $\ceX(M,\mu)$
the Lie algebra of those compactly supported vector fields that 
admit a \emph{compactly supported} potential, 
\[
	\ceX(M, \mu) := \{X \in \fX(M,\mu) \,;\, \iota_{X}\mu = d\alpha \text{ for some } \alpha \in \Omega_{c}^{n-2}(M)\}.
\]
This is not to be confused with the Lie algebra $\fX_{c, \ex}(M,\mu)$ of \emph{all} compactly supported divergence-free vector fields, which has
$\ceX(M,\mu)$ as an ideal with abelian quotient.
The compactly supported analogue of \eqref{ExactSequenceLeibniz} is the extension 
\begin{equation}\label{qq}
\Omega^{n-2}_{c,\cl}(M)\longrightarrow \Omega^{n-2}_c(M)\stackrel{q}{\longrightarrow} \ceX(M,\mu)
\end{equation}
of the Lie algebra $\ceX(M,\mu)$
by the Leibniz algebra $\Omega^{n-2}_c(M)$. 

Setting
\[ \oline{\Omega}{}^{n-2}_c(M) := {\Omega^{n-2}_c(M)}/{d\Omega^{n-3}_c(M)},\]
we obtain a central extension  of Lie algebras
\begin{equation}\label{eq:CptSupportSequence}
H^{n-2}_{c, \dR}(M) \rightarrow \oline{\Omega}{}_{c}^{n-2}(M)
\rightarrow \ceX(M,\mu).
\end{equation}

\subsection{Topological Lie and Leibniz algebras}
We aim to show that the central extensions \eqref{hof} and \eqref{eq:CptSupportSequence} are universal in a topological setting.

\begin{Definition}
A \emph{topological Lie algebra} is a (Hausdorff) 
topological vector space $\mathfrak{g}$, together with a continuous Lie  bracket
$[\,\cdot\,,\,\cdot\,]\colon \mathfrak{g} \times \mathfrak{g} \rightarrow \mathfrak{g}$.
Similarly, a \emph{topological Leibniz algebra} is a (Hausdorff) topological vector space $L$ with a continuous Leibniz 
bracket $[\,\cdot\,,\,\cdot\,]\colon L \times L \rightarrow L$.
\end{Definition}

The Lie algebras of smooth vector fields, and of (exact) divergence-free vector fields, on a compact manifold $M$
are topological Lie algebras for the Fr\'echet topology of uniform convergence in all derivatives.
For non-compact $M$ we equip these Lie algebras with the Fr\'echet topology that comes from the inverse limit over the compact subsets $K\subseteq M$, 
and their compactly supported versions with the LF-topology that comes from the (strict) direct limit \cite[Section I.13]{Treves1967}. 

Similarly, $\Omega^{n-2}(M)$ is a topological Leibniz algebra for the inverse limit Fr\'echet topology, and $\Omega_{c}^{n-2}(M)$ for the 
direct limit LF topology.
We show that $d\Omega^{n-3}(M)\subseteq \Omega^{n-2}(M)$ and 
$d\Omega^{n-3}_{c}(M)\subseteq \Omega_{c}^{n-2}(M)$ are closed, 
making their respective quotients $\oline{\Omega}{}^{n-2}(M)$ and $\oline{\Omega}{}_{c}^{n-2}(M)$ into Hausdorff topological Lie algebras
(cf.~\cite[Theorem 1.41]{rudinFunctionalAnalysis1991}). 

\begin{Lemma}\label{exareclosed}
Let $M$ be a (not necessarily compact) orientable manifold of dimension $n$. Then for all $0\leq k\leq n-1$, the subspaces
$d\Omega^{k}(M)\subset \Omega^{k+1}(M)$ and 
$d\Omega^{k}_c(M)\subset \Omega^{k+1}_c(M)$ are closed for the Fr\'echet and LF-topology, respectively. 
\end{Lemma}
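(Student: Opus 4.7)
The plan is to realize both $d\Omega^k(M)$ and $d\Omega^k_c(M)$ as intersections of closed subspaces, namely intersections of kernels of continuous linear functionals. Since $d\circ d=0$ and $d$ is continuous in both topologies, both are contained in the closed subspaces $\Omega^{k+1}_{\cl}(M)=\ker d$ and $\Omega^{k+1}_{c,\cl}(M)=\ker d$ of closed forms. The real content of the lemma is to cut out the exact forms inside the closed forms by additional continuous linear conditions.

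The tool for this is Poincaré duality on the orientable manifold $M$: integration of wedge products descends to non-degenerate pairings
\[
H^{k+1}_{\dR}(M)\times H^{n-k-1}_{c,\dR}(M)\to\R \qquad \text{and} \qquad H^{k+1}_{c,\dR}(M)\times H^{n-k-1}_{\dR}(M)\to\R.
\]
From the first pairing, a closed $\omega\in\Omega^{k+1}(M)$ is exact if and only if $\int_M\omega\wedge\eta=0$ for every closed $\eta\in\Omega^{n-k-1}_c(M)$; from the second, a closed $\omega\in\Omega^{k+1}_c(M)$ is exact in $\Omega^\bullet_c(M)$ if and only if $\int_M\omega\wedge\eta=0$ for every closed $\eta\in\Omega^{n-k-1}(M)$. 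The ``only if'' directions are immediate from Stokes' theorem applied to $d(\alpha\wedge\eta)=\omega\wedge\eta$, noting that $\alpha\wedge\eta$ has compact support in either setting. For each fixed $\eta$ the functional $\omega\mapsto\int_M\omega\wedge\eta$ is continuous in the relevant topology: the integrand has support contained in a fixed compact set (either because $\eta$ does, in the Fréchet case, or because $\omega$ does, in the LF case), and integration on a compact set is continuous for uniform convergence, hence a fortiori for the Fréchet topology of $\Omega^{k+1}(M)$ or the LF-topology of $\Omega^{k+1}_c(M)$. Intersecting the resulting kernels with $\Omega^{k+1}_{\cl}(M)$ or $\Omega^{k+1}_{c,\cl}(M)$ recovers $d\Omega^k(M)$ or $d\Omega^k_c(M)$, which is therefore closed.

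The only substantive ingredient is the non-degeneracy of Poincaré duality in the pointwise form I use (a cohomology class vanishes iff it pairs to zero with every closed test form); I make no claim of a topological duality of the potentially infinite-dimensional cohomology spaces, which is what could go wrong. Non-degeneracy in this sense is standard and can be proved by a Mayer--Vietoris argument along an exhaustion of $M$ by open sets of finite type, and is precisely where the orientability hypothesis enters. Once this is granted, the topological part of the argument reduces to the trivial observation that a continuous linear map into a Hausdorff space has closed kernel.
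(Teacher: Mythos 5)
Your proof is correct and follows essentially the same route as the paper: both realize the exact forms as an intersection of kernels of continuous integration functionals, relying on the non-degeneracy (in both variables) of the Poincar\'e duality pairing between ordinary and compactly supported de Rham cohomology. The only cosmetic differences are that in the Fr\'echet case the paper tests exactness by integrating over closed singular cycles (via the de Rham theorem) rather than against compactly supported closed forms, and in the LF case it first intersects with $d\Omega^{k}(M)\cap\Omega^{k+1}_c(M)$ --- a condition your characterization shows to be redundant.
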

\begin{proof}
By the de Rham isomorphism, $\alpha \in \Omega^{k+1}(M)$ is exact if and only if it integrates to zero against all closed cycles,
so $d\Omega^k(M)$ is closed in $\Omega^{k+1}(M)$ for the Fr\'echet topology (cf.\ \cite[Prop.\ 5.2]{DiezJanssensNeebVizman2021}).
The inclusion $\Omega^{k+1}_c(M)\to \Omega^{k+1}(M)$ is continuous (with the LF-topolgy on the left and the Fréchet topology on the right), hence $d\Omega^{k}(M)\cap \Omega^{k+1}_c(M)$ is closed in $ \Omega^{k+1}_c(M)$. We now consider for any $\alpha\in \Omega_{\cl}^{n-k-1}(M)$ the following functional:
$$
F_\alpha:\Omega^{k+1}_c(M)\to \mathbb R,~~~ \beta\mapsto F_\alpha(\beta)=\int_{M}\alpha\wedge \beta
$$
These functionals are continuous in the LF-topolgy, since they are continuous when restricted to spaces of $\beta$'s with any fixed compact support. This means that $\ker(F_\alpha)$ and hence $\bigcap_\alpha \ker(F_\alpha)$ are closed. Hence also 
 $$ C=d\Omega^{k}(M)\cap \Omega^{k+1}_c(M)\cap \bigcap_{\alpha\in \Omega_{\cl}^{n-k-1}(M)} \ker(F_\alpha)$$
is closed. We claim this space is equal to $d\Omega^{k}_c(M)$. 
To show that $d\Omega^{k}_{c}(M) \subseteq C$, note that $\beta\in d\Omega^{k}_c(M)$ implies that $F_\alpha(\beta)=0$ for all closed $\alpha$.
For the converse, recall that by Poincaré duality, the pairing 
$H^{n-k-1}_{\dR}(M)\times H^{k+1}_{\dR, c}(M)\to \mathbb R$ defined by $([\alpha],[\beta])\mapsto F_\alpha(\beta)$ is non-degenerate in both entries. Since any $\beta\in C$ has compact support and is closed, it defines a class in $H^{k+1}_c(M)$. By Poincaré duality, being in the kernel of all $F_\alpha$ means that this class is zero, so $\beta\in d\Omega^{k}_c(M)$ and  $C \subseteq d\Omega^{k}_c(M)$. So $d\Omega^{k}_c(M) = C$ is closed, and $\Omega^{k+1}_{c}(M)/d\Omega^{k}_{c}(M)$ is a Hausdorff topological vector space.
\end{proof}

Equipped with this topology, the exact sequence \eqref{hof} is a central extension of Fr\'echet--Lie algebras, and 
\eqref{eq:CptSupportSequence} is a central extension of LF--Lie algebras.

\begin{Remark}
In this paper 
we will generally restrict attention to the case $\mathrm{dim}(M)\geq 3$, because the case $\mathrm{dim}(M)=2$ requires a different approach. In the two-dimensional case, a volume form is the same as a symplectic form, $\fX_{\ex}(M,\mu)$ is the Lie algebra of Hamiltonian vector fields, and $\oline{\Omega}{}^{n-2}(M) = C^{\infty}(M)$ is the Poisson algebra. In general the Poisson algebra is neither perfect nor centrally closed, and its central extensions were investigated elsewhere \cite{JV16}.
\end{Remark}

\section{Perfectness and the ideal of squares}\label{sec:perf}

The goal of this section is to show that the Leibniz algebras $\Omega^{n-2}(M)$ and $\Omega_{c}^{n-2}(M)$ are perfect, 
and that their ideal of squares is given by the exact forms.

\subsection{Perfectness}

We will work in local coordinates in which $\mu$ takes the standard form. 
\begin{Lemma}\label{Lemma:localcoordinates} Let $\mu$ be a volume form on a manifold $M$. Let $U \subseteq M$ be a coordinate neighbourhood that is diffeomorphic to $\R^n$. Then $U$ admits coordinates in which $\mu$ takes the standard form   $\mu = dx_1\wedge \cdots \wedge dx_n$.
\end{Lemma}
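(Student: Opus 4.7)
My plan is to prove this by an explicit construction rather than a Moser-type deformation argument. Using the given diffeomorphism $U \cong \R^n$, let $y_1, \ldots, y_n$ be the associated coordinates. Since $\mu$ is a volume form on $U$, one can write
\[
\mu = f(y_1, \ldots, y_n) \, dy_1 \wedge \cdots \wedge dy_n
\]
for a smooth, strictly positive function $f$. The task is then to produce new coordinates $x_1, \ldots, x_n$ on $U$ whose Jacobian with respect to $y_1, \ldots, y_n$ equals $f$.

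First, I would set $x_i := y_i$ for $i = 1, \ldots, n-1$ and
\[
x_n(y_1, \ldots, y_n) := \int_{0}^{y_n} f(y_1, \ldots, y_{n-1}, t) \, dt.
\]
Since $dx_i = dy_i$ for $i < n$, wedging with $dx_n = \sum_{j<n} (\partial_{y_j} x_n)\, dy_j + f\, dy_n$ kills every term except the one with $dy_n$, giving $dx_1 \wedge \cdots \wedge dx_n = f \, dy_1 \wedge \cdots \wedge dy_n = \mu$. So on the nose, $\mu$ is the standard volume form in the new coordinates.

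The one step that requires care is checking that $(x_1, \ldots, x_n)$ genuinely defines a chart on $U$. The Jacobian of the map $\Phi\colon y \mapsto x$ is lower triangular with diagonal entries $(1, \ldots, 1, f)$, hence everywhere non-singular because $f > 0$; and for each fixed $(y_1, \ldots, y_{n-1})$ the assignment $y_n \mapsto x_n$ is strictly increasing, again by positivity of $f$, so $\Phi$ is globally injective. Therefore $\Phi$ is a diffeomorphism from $U \cong \R^n$ onto an open subset of $\R^n$, which provides the desired coordinate system.

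The only conceptual subtlety, rather than a real obstacle, is that the image of $\Phi$ need not be all of $\R^n$: if $f$ has finite integral in the $y_n$ direction, then $x_n$ is bounded. This is harmless, since the lemma only asks for a chart on $U$ in which $\mu$ takes the standard form, not for a chart whose image is $\R^n$. An alternative route via Moser's trick, interpolating $\mu_t = (1-t)\, dy_1 \wedge \cdots \wedge dy_n + t\, \mu$ and solving $d \iota_{X_t} \mu_t = dy_1 \wedge \cdots \wedge dy_n - \mu$ using the Poincar\'e lemma on $\R^n$, would yield a chart onto $\R^n$ itself but forces one to argue for completeness of a time-dependent vector field on a non-compact manifold; the explicit construction above bypasses this issue entirely.
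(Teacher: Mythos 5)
Your proof is correct and is essentially the same construction as the paper's: the paper also replaces one coordinate (there the first rather than the last) by the partial antiderivative $\int_0^{x'_1} f(s,x'_2,\dots,x'_n)\,ds$, leaving the others untouched. Your additional remarks on the triangular Jacobian, global injectivity, and the image possibly being a proper open subset of $\R^n$ are exactly the details the paper leaves implicit, and they are harmless for how the lemma is used later.
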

\begin{proof}

If $\mu = f dx'_1\wedge \ldots \wedge dx'_n$ in local coordinates $x'_i \in \R^n$, then set $x_i := x'_i$ for $i\geq 2$, and replace $x'_1$ by
$x_{1}(x'_1, \ldots, x'_n):= \int_{0}^{x'_1}f(s, x'_2, \ldots, x'_n)ds$.
\end{proof}

Since $\mu$ is non-degenerate, it induces an isomorphism $A \mapsto i_{A}\mu$ between multivector fields and differential forms. In particular, we can rephrase 
the Cartan calculus (contraction, Lie derivative, de Rham differential) in terms of multivector fields. We will use this perspective in the sequel, since it makes certain calculations more traceable. A summary of the most relevant formulas for us from this perspective can be found in Appendix~\ref{app:multivfcartan}.

\begin{Lemma}\label{Lemma:localperfect}
Let $U\subset \mathbb R^n$ with $n\geq 3$ be open and connected, and let $\mu = dx_1\wedge \cdots \wedge dx_n$.
Then any element in  $\Omega_{c}^{n-2}(U)$ can be expressed by at most $\binom{n}{2}\cdot (n+1)$ commutators.
\end{Lemma}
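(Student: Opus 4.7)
The plan is to transfer the problem to bivector fields via the isomorphism $A \mapsto \iota_A \mu$ from $\mathfrak{X}^2_c(U)$ to $\Omega_c^{n-2}(U)$, under which the Leibniz bracket becomes $[A,B] = L_{X_A} B$ where $i_{X_A}\mu = d(\iota_A \mu)$. In the given coordinates, any compactly supported bivector decomposes as
\[
A \;=\; \sum_{1 \le i < j \le n} f_{ij}\, \partial_i \wedge \partial_j,\qquad f_{ij} \in C_c^\infty(U),
\]
giving $\binom{n}{2}$ summands, so it suffices to show that each summand $f\, \partial_i \wedge \partial_j$ is a sum of at most $n+1$ commutators.

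First I would fix a bump function $\phi \in C_c^\infty(U)$ with $\int_U \phi\, d\mu = 1$, supported in a coordinate ball inside $U$. Using the compactly supported Poincaré lemma on $U$ (and the connectedness of $U$ to transport mass to $\mathrm{supp}\,\phi$), I would decompose
\[
f \;=\; \sum_{k=1}^n \partial_k g_k \;+\; c\,\phi,\qquad g_k \in C_c^\infty(U),\quad c = \int_U f\, d\mu.
\]
For each direction $k$, choose a cutoff $\eta$ equal to $1$ on a neighbourhood of $\mathrm{supp}\,g_k$ and pick $l \neq k$; then $\alpha_k := \eta\, x_l\, dx_{\widehat{k}\widehat{l}}$ is compactly supported with $X_{\alpha_k} = \pm\partial_k$ on $\{\eta = 1\}$, so that
\[
[\alpha_k,\, g_k\, \partial_i \wedge \partial_j] \;=\; L_{\partial_k}(g_k\, \partial_i \wedge \partial_j) \;=\; (\partial_k g_k)\, \partial_i \wedge \partial_j.
\]
This produces $n$ commutators, one for each coordinate direction.

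It remains to express the residue $c\phi\, \partial_i \wedge \partial_j$ as a single commutator. Picking $k \in \{1,\dots,n\}\setminus\{i,j\}$ (possible since $n \geq 3$) and $l \in \{i,j\}$, the divergence-free vector field $X_0 := x_k\partial_k - x_l\partial_l$ satisfies the structural conditions $\partial_i X_0^m = \partial_j X_0^m = 0$ for $m \neq i,j$, which by direct computation forces
\[
L_{X_0}(g\, \partial_i \wedge \partial_j) \;=\; (X_0 g + g)\, \partial_i \wedge \partial_j
\]
with no spurious bivector components. Solving $(1 + X_0)g = \phi$ for compactly supported $g$, together with realising $X_0$ as the vector field associated to a compactly supported $(n-2)$-form (via a divergence-free cutoff correction, using that on $U$ every compactly supported divergence-free vector field with vanishing compactly supported cohomology class admits a compactly supported potential), yields the required final commutator.

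The main obstacle is this last step: integrating along the flow of $X_0$ produces a $g$ that fails to be compactly supported, because $X_0$ is hyperbolic with fixed set $\{x_k = x_l = 0\}$, so trajectories escape to infinity along the invariant hyperbolas $x_k x_l = \mathrm{const}$. I would overcome this by allowing the ansatz bivector $B$ to include additional components beyond $g\,\partial_i \wedge \partial_j$, chosen via the parameterised compactly supported Poincaré lemma of Appendix~\ref{sec:appendixParameterPoincare} so that the non-$\partial_i\wedge\partial_j$ components of $L_X B$ cancel while the $\partial_i\wedge\partial_j$-component matches $\phi$ exactly, with $B$ compactly supported. Assembling the $n$ directional commutators with this one residual commutator gives the bound $n+1$ per basis summand, hence $\binom{n}{2}(n+1)$ in total.
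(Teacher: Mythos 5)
Your reduction to $\binom{n}{2}$ basis bivectors, the decomposition $f = \sum_k\partial_k g_k + c\phi$, and the $n$ directional commutators realizing each $(\partial_k g_k)\,\partial_i\wedge\partial_j$ as a single bracket are all sound and match the structure of the paper's argument. The gap is in the last step, which is where the actual difficulty of the lemma lies: you must absorb the mass term $c\phi\,\partial_i\wedge\partial_j$, and your proposed mechanism does not work as written. Solving $(1+X_0)g=\phi$ with $X_0=x_k\partial_k-x_l\partial_l$ has no compactly supported solution (as you acknowledge, the backward trajectories of the hyperbolic flow through $\supp(\phi)$ are unbounded, and need not even stay in $U$), and the announced repair --- adding unspecified extra components to $B$ and invoking the parametrised Poincar\'e lemma so that ``the non-$\partial_i\wedge\partial_j$ components cancel while the $\partial_i\wedge\partial_j$-component matches $\phi$ exactly'' --- is a restatement of the desired conclusion, not a construction. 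Nothing in the proposal shows that such a compactly supported $B$ exists, and the parametrised Poincar\'e lemma (which solves $d_M\delta=\alpha$ for fibrewise zero-integral forms) does not obviously produce one.

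The paper sidesteps the issue by never demanding that the residue be \emph{exactly} one commutator; it reverses the order of your two steps. Pick a third coordinate $z\notin\{x,y\}$ (this is where $n\geq 3$ enters), let $A=\phi\,\partial_x\wedge\partial_y$ with $\phi=yz$ on $\supp(f)$, so that $X_A=z\partial_x$ there, and compute
\[
[A,\, f\partial_y\wedge\partial_z] \;=\; L_{z\partial_x}(f\partial_y\wedge\partial_z) \;=\; f\,\partial_x\wedge\partial_y \;+\; z f_x\,\partial_y\wedge\partial_z .
\]
This produces the \emph{entire} target $f\,\partial_x\wedge\partial_y$ (mass included) in a single commutator, at the cost of an error $zf_x\,\partial_y\wedge\partial_z$ which automatically satisfies $\int_U zf_x\,\mu=0$ by integration by parts; that error is then killed by the $n$ divergence commutators, giving $n+1$ in total. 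To salvage your order of operations you would have to either exhibit a compactly supported $B$ with $[A,B]=c\phi\,\partial_i\wedge\partial_j$ on the nose (not done, and not obviously possible), or allow an error term in the residual commutator and restructure the bookkeeping so that it is absorbed by the divergence step --- which is precisely the paper's proof.
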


\begin{proof}
We show this step by step, using the bivector field expressions from Appendix \ref{app:multivfcartan} instead of $(n{-}2)$-forms:
\begin{enumerate}
    \item Let $x,y,z$ be coordinates among $x_1,...,x_n$ and let $h\in C^{\infty}_c(U)$. We realise $\partial_xh\partial_y\wedge\partial_z$ as 
    a single commutator  in  $\mathfrak X^2_{c}(U)\cong\Omega_{c}^{n-2}(U)$. For that we choose a function $\phi$ which is compactly supported in $U$ and coincides with $y$ on the support of $h$. Then 
    $[\phi \partial_x\wedge \partial_y, h\partial_y\wedge\partial_z]=L_{\partial_x}(h\partial_y\wedge\partial_z)=\partial_xh\partial_y\wedge\partial_z$. Note that this works even when $x=z$.
    \item Suppose $g$ satisfies $\int_U gdx_1\wedge ...\wedge dx_n=0$. Then the class ${[gdx_1\wedge ...\wedge dx_n]}$ is zero in the compactly supported cohomology of $U$, hence $g=\sum_{i=1}^n\partial_ih_i$ for $h_i\in C^{\infty}_c(U)$. In particular $g\partial_x\wedge\partial_y$ can be written as a sum of $n$ commutators.
    \item Let $x, y, z$ be three coordinates among $x_1, \ldots, x_n$. Let $f \in C^{\infty}_{c}(U)$, 
and choose $\phi \in C^{\infty}_{c}(U)$ such that $\phi$ agrees with $yz$ on $\mathrm{supp}(f)$.
With $A = \phi \partial_{x} \wedge \partial_y$ and $B = f\partial_{y}\wedge \partial_z$, we have $\delta(A)=z\partial_x$
on $\supp(B)$ by
Corollary~\ref{Corollary:deltaBivector}, so $[A,B] = L_{z\partial_x}(f\partial_y\wedge \partial_z) = f\partial_x \wedge \partial_y + z f_x \partial_y \wedge \partial_z$ by Proposition~\ref{Prop:bracketbivectors}.
Since $zf_x$ integrates to zero over $U \subseteq \R^n$, this means that $ f\partial_x \wedge \partial_y$ can be written as a sum of $n+1$ commutators.
\end{enumerate}
In total this means that a form supported on $U$ can be written as the sum of $\binom{n}{2}\cdot (n+1)$ commutators.
\end{proof}

\begin{Remark}
Note that the statement is false for $n=2$. In this case $\Omega^{n-2}_{c}(V) = C^{\infty}_{c}(V)$ is the compactly supported Poisson algebra, whose commutator ideal consists of functions that integrate to zero \cite[Section 12]{ALDM74}, \cite[Prop.~3.1]{JV16}.  
\end{Remark}

We can now prove the global statement for manifolds of dimension $\mathrm{dim}(M)\geq 3$:

\begin{Theorem}\label{Thm:perfect}
The Leibniz algebras $\Omega^{n-2}_{c}(M)$ and $\Omega^{n-2}(M)$ are perfect.
\end{Theorem}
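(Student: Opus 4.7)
My plan is to globalize Lemma~\ref{Lemma:localperfect} via a partition of unity argument, using a coloring trick to keep the number of commutators bounded. By paracompactness of the $n$-manifold $M$, one may choose a locally finite cover $\{V_a\}_{a \in A}$ by relatively compact coordinate charts, each diffeomorphic to $\mathbb{R}^n$ and in which $\mu$ takes the standard form (Lemma~\ref{Lemma:localcoordinates}). A standard refinement argument for $n$-manifolds yields such a cover with the additional \emph{coloring property}: the index set splits as $A = A_0 \sqcup \cdots \sqcup A_n$, where within each color class the charts $\{V_a : a \in A_i\}$ are pairwise disjoint. Let $\{\chi_a\}$ be a subordinate smooth partition of unity with $\supp(\chi_a)$ compact in $V_a$.

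For $\omega \in \Omega^{n-2}(M)$, define $\omega_i := \sum_{a \in A_i} \chi_a \omega$, which is a smooth form on $M$ because within color $i$ the supports are pairwise disjoint and locally finite, so $\omega = \sum_{i=0}^n \omega_i$. On each chart $V_a$ the compactly supported form $\chi_a \omega$ can, by Lemma~\ref{Lemma:localperfect}, be written as a sum $\sum_{j=1}^{N} [A_{a,j}, B_{a,j}]$ of at most $N := \binom{n}{2}(n+1)$ commutators in $\Omega^{n-2}_c(V_a)$ (padded with zero terms if necessary). Since the $V_a$ of any given color are disjoint, the \emph{parallel sums} $A_j^{(i)} := \sum_{a \in A_i} A_{a,j}$ and $B_j^{(i)} := \sum_{a \in A_i} B_{a,j}$ define smooth $(n-2)$-forms on $M$. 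The vanishing of brackets between forms with disjoint supports gives
\[
\omega_i = \sum_{j=1}^{N} \bigl[A_j^{(i)},\, B_j^{(i)}\bigr],
\]
so $\omega$ is a finite sum of at most $(n+1)N$ commutators, proving $\Omega^{n-2}(M) = [\Omega^{n-2}(M), \Omega^{n-2}(M)]$.

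The compactly supported case is a simpler instance of the same argument: cover $\supp(\omega)$ by \emph{finitely} many charts with standard $\mu$, apply Lemma~\ref{Lemma:localperfect} to each piece of the associated partition of unity decomposition, and sum the finitely many resulting commutators. The main subtlety I anticipate is in the non-compactly supported case, where a naive partition of unity would produce an a priori infinite sum of commutators; the coloring trick is precisely what forces the number of commutators to be bounded uniformly (by $(n+1)\binom{n}{2}(n+1)$) and hence yields algebraic perfectness rather than only topological perfectness.
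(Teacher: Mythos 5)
Your proposal is correct and follows essentially the same route as the paper: the paper also globalizes Lemma~\ref{Lemma:localperfect} via a partition of unity subordinate to a cover coloured in $n+1$ classes of pairwise disjoint charts (obtained there from Ostrand's theorem on covering dimension, which is exactly your ``coloring trick''), assembling the local commutators in parallel within each colour to keep the total at $\binom{n}{2}(n+1)^2$. The only cosmetic difference is that the paper applies the local lemma to connected open subsets of standard-volume charts rather than to charts diffeomorphic to $\R^n$, which is immaterial since Lemma~\ref{Lemma:localperfect} holds for any connected open $U\subset\R^n$.
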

\begin{proof}
The fact that $\Omega_{c}^{n-2}(M)$ is perfect follows from Lemma~\ref{Lemma:localperfect} by a partition of unity argument. The statement for $\Omega^{n-2}(M)$ needs a slightly refined argument.

Let $\mathcal U$ be a covering of $M$ by relatively compact open sets in which $\mu$ takes the standard form $\mu = {dx_1\wedge \cdots \wedge dx_n}$. Since the covering dimension of an $n$-dimensional manifold is $n$, Ostrand's theorem (\cite[Lemma 3]{ostrandCoveringDimensionGeneral1971}, refining the 
Brouwer-Lebesgue Paving Principle, cf.\ e.g.\ \cite{HurewiczWallman1941}) states that there exist open sets $V_{i,k}$, $i\in\{1,...,n+1\}$, $k\in\mathbb N$ with the following properties:
\begin{itemize}
	\item Each $V_{i,k}$ is a connected open subset of an element in $\mathcal U$.
	\item For fixed $i$ and $k\neq l$, $V_{i,k}\cap V_{i,l}=\emptyset$
	\item $V_{i,k}$ cover $M$.
		\end{itemize}
In particular $W_i=\bigsqcup_k V_{i,k}$ gives an open cover of $M$ by $n+1$ sets. We can now pick a partition of unity $\rho$ with respect to $\{W_i\}$. Let  $\alpha\in \Omega^{n-2}(M)$. Since the restriction $\rho_i\alpha|_{V_{i,k}}$ is compactly supported, it can be written as a sum of $\binom{n}{2}(n+1)$ commutators 
in $\Omega^{n-2}_{c}(V_{i,k})$.
Since the $V_{i,k}$ are disjoint for fixed $i$, these commutators can be assembled and we obtain an expression of $\rho_i\alpha$ in terms of $\binom{n}{2}(n+1)$ commutators in $\Omega^{n-2}(W_i)$. But this means that $\alpha=\sum_i \rho_i\alpha$ is a sum of at most $\binom{n}{2}(n+1)^2$ commutators
in $\Omega^{n-2}(M)$.
\end{proof}

\begin{Corollary}[\cite{Omori1974, Lichnerowicz1974}]\label{PerfectLA}
The Lie algebras $\fX_{\ex}(M, \mu)$ and $\ceX(M, \mu)$ are perfect for $\mathrm{dim}(M)\geq 3$.
\end{Corollary}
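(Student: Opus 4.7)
The plan is to deduce perfectness of the Lie algebras directly from perfectness of the ambient Leibniz algebras (Theorem~\ref{Thm:perfect}), using the fact that $q$ intertwines the Leibniz bracket on forms with the Lie bracket on vector fields. Concretely, given any $X\in\fX_{\ex}(M,\mu)$, choose a potential $\alpha\in\Omega^{n-2}(M)$ with $q(\alpha)=X_\alpha = X$. By Theorem~\ref{Thm:perfect}, we can write
\[
\alpha \;=\; \sum_{i=1}^{N}[\beta_i,\gamma_i] \;=\; \sum_{i=1}^{N} L_{X_{\beta_i}}\gamma_i
\]
for some $\beta_i,\gamma_i\in\Omega^{n-2}(M)$. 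Applying $q$ and using the computation $q([\beta_i,\gamma_i])=[X_{\beta_i},X_{\gamma_i}]$ from the proof that \eqref{ExactSequenceLeibniz} is a central extension, we obtain $X=\sum_i[X_{\beta_i},X_{\gamma_i}]$, which exhibits $X$ as a finite sum of commutators in $\fX_{\ex}(M,\mu)$.

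The same argument, applied to the compactly supported extension \eqref{qq} and the compactly supported half of Theorem~\ref{Thm:perfect}, shows that every $X\in\ceX(M,\mu)$ is a finite sum of commutators in $\ceX(M,\mu)$; note that since $\beta_i,\gamma_i$ have compact support, so do the vector fields $X_{\beta_i},X_{\gamma_i}$, and both are in $\ceX(M,\mu)$ by construction.

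I do not anticipate any real obstacle: the content is entirely contained in Theorem~\ref{Thm:perfect}, and the corollary is just the observation that surjective Leibniz-algebra homomorphisms onto a Lie algebra preserve perfectness. The only thing to be slightly careful about is keeping track of compact supports in the second case and making sure one lands in $\ceX(M,\mu)$ rather than in the larger $\fX_{c,\ex}(M,\mu)$, which is immediate from the choice of compactly supported potentials.
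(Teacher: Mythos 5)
Your argument is correct and is precisely the one the paper intends: the corollary is stated without proof because it follows immediately from Theorem~\ref{Thm:perfect} by pushing the commutator decomposition forward along the surjective bracket-preserving map $q$ from \eqref{ExactSequenceLeibniz} (resp.\ \eqref{qq}). Your care about landing in $\ceX(M,\mu)$ rather than $\fX_{c,\ex}(M,\mu)$ is exactly the right point to check, and it is handled correctly by choosing compactly supported potentials.
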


\begin{Remark}
Note that Corollary \ref{PerfectLA} is stated in \cite{Lichnerowicz1974} and 
\cite[Chapter X.3]{Omori1974}, and that in the compact case
Theorem~\ref{Thm:perfect} would follow from Corollary~\ref{PerfectLA} together with
the characterization of the ideal of squares that we prove in
Theorem~\ref{Thm:squares} below. Unfortunately there seems to be an error in the proof of \cite{Omori1974},
and a small gap in the proof of \cite{Lichnerowicz1974}, which is why we provide this independent proof inspired by the approach used in \cite{Omori1974}. 
\end{Remark}

\subsection{The ideal of squares}
In this subsection we will investigate the \emph{ideal of squares} $\Omega^{n-2}(M)^{\sq}$, i.e. the left ideal generated by $[\alpha,\alpha]$ for $\alpha\in \Omega^{n-2}(M)$. Let us start by observing that for any $\alpha\in \Omega^{n-2}$, $[\alpha,\alpha]=L_{X_{\alpha}}\alpha=d\iota_{X_\alpha}\alpha$. In particular the ideal of squares is contained in the exact forms. The goal of this section is to prove the converse, i.e.\ that any exact form can be written as a sum of squares. We start with the compactly supported statement for cubes in $\mathbb R^n$, then we prove the general local statement and then the global version. 

\begin{Lemma} \label{lem:cubesquares}
Let $U\subset \mathbb R^n$ be a relatively compact cube (i.e. the cartesian product of open finite intervals) with the canonical volume form $\mu$ and $\beta\in \Omega^{n-3}_c(U)$.
Then $\beta$ can be written as a sum $\beta = \sum_{i}\iota_{X_{\alpha^i}}\alpha^i$ 
of at most $4 \binom{n}{3}$ forms $\alpha^i$ with compact supports in $U$.  
\end{Lemma}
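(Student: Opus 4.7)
The plan is to pass to the multivector picture via the isomorphism $\fX^k(U) \cong \Omega^{n-k}(U)$, $C \mapsto \iota_C \mu$, summarised in Appendix~\ref{app:multivfcartan}. Under this identification, an $(n{-}2)$-form $\alpha = \iota_A \mu$ corresponds to a bivector $A$ and an $(n{-}3)$-form $\beta = \iota_B \mu$ to a trivector $B$; moreover, since $X_\alpha$ is determined by $\iota_{X_\alpha}\mu = d\iota_A\mu$, and since $\iota_X \iota_A \mu = \pm\iota_{X\wedge A}\mu$, the contraction $\iota_{X_\alpha}\alpha$ corresponds to the trivector $(\delta A)\wedge A$ up to a sign absorbed in the conventions. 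The lemma thus reduces to writing every $B \in \fX^3_c(U)$ as a sum of at most $4\binom{n}{3}$ trivectors of the form $(\delta A^i)\wedge A^i$ with $A^i$ compactly supported in $U$. Since any trivector decomposes into $\binom{n}{3}$ coordinate monomials, it is enough to handle a single monomial $f\, \partial_x \wedge \partial_y \wedge \partial_z$ (with $f \in C^\infty_c(U)$ and $(x,y,z)$ a distinct triple of coordinates from $x_1, \ldots, x_n$) using at most $4$ squares.

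The key local computation is the following: using the divergence formulas of Appendix~\ref{app:multivfcartan}, one checks that a bivector of the form $A = \phi\, \partial_x \wedge \partial_y + \psi\, \partial_x \wedge \partial_z$ sharing the direction $\partial_x$ satisfies
\[ (\delta A)\wedge A = (\phi_x \psi - \psi_x \phi)\, \partial_x \wedge \partial_y \wedge \partial_z, \]
a Wronskian in the shared variable $x$ against the target monomial. Analogous identities with shared direction $\partial_y$ or $\partial_z$ give Wronskians in those variables. A crucial special case: if $\rho \in C^\infty_c(U)$ is a bump equal to $1$ on $\supp(g)$ for some $g \in C^\infty_c(U)$, then $g_x \rho - \rho_x g = \partial_x g$ everywhere on $U$, so the trivector $(\partial_x g)\, \partial_x \wedge \partial_y \wedge \partial_z$ is realised by a single square, and likewise for $\partial_y g$ and $\partial_z g$.

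To express a general $f\, \partial_x \wedge \partial_y \wedge \partial_z$ as four squares, I would split the cube as $U = U_{xyz} \times U_w$, where $w$ denotes the remaining $n{-}3$ coordinates, and set $c(w) := \int_{U_{xyz}} f(\cdot, w)\, dx\, dy\, dz$, which is compactly supported in $U_w$. The first bivector is chosen to be $A^1 = \rho(x,y,z)\, \partial_x \wedge \partial_y + \sigma(x,y,z) c(w)\, \partial_x \wedge \partial_z$, with fixed $\rho, \sigma \in C^\infty_c(U_{xyz})$ normalised so that $\int_{U_{xyz}}(\rho_x \sigma - \sigma_x \rho)\, dx\,dy\,dz = 1$; then the residue $\tilde f := f - (\rho_x \sigma - \sigma_x \rho)c(w)$ has vanishing fibrewise integral over $U_{xyz}$ for every $w$. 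The parameterised compactly supported Poincar\'e Lemma of Appendix~\ref{sec:appendixParameterPoincare}, applied to the fibration $U_{xyz}\times U_w \to U_w$, then gives $\tilde f = \partial_x g + \partial_y h + \partial_z k$ for some $g, h, k \in C^\infty_c(U)$. By the bump trick above each of these three partial derivatives is a single square, yielding $4$ squares per monomial triple and $4\binom{n}{3}$ in total.

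The main obstacle is the construction of the first bivector $A^1$ that kills the fibrewise integral obstruction: one must verify that a compactly supported pair $(\rho, \sigma)$ in $U_{xyz}$ with $\int(\rho_x \sigma - \sigma_x \rho)\,dx\,dy\,dz = 1$ actually exists, which is a short Fubini-and-integration-by-parts calculation using two bumps with overlapping support in the $x$-direction. Everything else is a direct unpacking of the multivector formulas and of the parameterised Poincar\'e Lemma, with the supports of the auxiliary functions tracked throughout to stay inside $U$.
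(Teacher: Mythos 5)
Your proposal follows essentially the same route as the paper's proof: reduce to coordinate monomials $f\,\partial_x\wedge\partial_y\wedge\partial_z$, spend one square to match the fibrewise integral over the $(x,y,z)$-slices, and then use the parameterised compactly supported Poincar\'e Lemma to write the remainder as a total divergence, each partial derivative costing one further square via a bump function. Your Wronskian identity $(\delta A)\wedge A = (\phi_x\psi - \psi_x\phi)\,\partial_x\wedge\partial_y\wedge\partial_z$ for $A = \phi\,\partial_x\wedge\partial_y + \psi\,\partial_x\wedge\partial_z$ is correct and in fact subsumes both special cases the paper uses (the paper's $X\wedge Y$ with $X=\phi\partial_x$, $Y = \partial_y - f^1\partial_z$ is the case $\psi = -\phi f^1$, whose Wronskian is $\phi^2\,\partial_x f^1$), so this part is a clean repackaging rather than a genuinely different argument.

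The one concrete defect is in your first bivector $A^1 = \rho(x,y,z)\,\partial_x\wedge\partial_y + \sigma(x,y,z)\,c(w)\,\partial_x\wedge\partial_z$: for $n>3$ the first summand is \emph{not} compactly supported in $U = U_{xyz}\times U_w$, since $\rho$ has no decay in the $w$-directions (its support in $U$ is $\supp(\rho)\times U_w$, which is closed in $U$ but not compact). The fix is immediate: replace $\rho(x,y,z)$ by $\rho(x,y,z)\tau(w)$ with $\tau\in C^\infty_c(U_w)$ equal to $1$ on a neighbourhood of $\supp(c)$; since $\tau$ is independent of $x$, the Wronskian becomes $\tau c\,(\rho_x\sigma-\sigma_x\rho) = c\,(\rho_x\sigma-\sigma_x\rho)$ and nothing else changes. (The paper sidesteps this by building the cutoff in the parameter directions into the single function $h=\phi\psi$ multiplying the vector field $\partial_x$.) The existence of $\rho,\sigma\in C^\infty_c(U_{xyz})$ with $\int(\rho_x\sigma-\sigma_x\rho)\,dx\,dy\,dz=1$, which you defer, is indeed routine: integration by parts gives $\int(\rho_x\sigma-\sigma_x\rho)=-2\int\rho\,\sigma_x$, which is nonzero for, e.g., $\sigma=\partial_x\rho$, and one then normalises. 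With the cutoff inserted, your argument is complete and yields the stated bound of $4\binom{n}{3}$.
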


Again, in the proof we will employ multivector fields in the calculations. It will be useful to know that for a form $\alpha={\iota_{X\wedge Y}\mu}$, the formula \eqref{eq:xalp} in Appendix \ref{app:multivfcartan} implies:
\begin{align}\label{eq:contractiondecomposable}
    \iota_{X_\alpha}\alpha=-\iota_{[X,Y]\wedge X\wedge Y}\mu.
\end{align}

\begin{proof}
Since every $\beta \in \Omega^{n-3}_{c}(U)$ can be written as a sum of at most $\binom{n}{3}$ terms of the form 
$$\beta=g\iota_{\partial_{x}\wedge\partial_{y}\wedge\partial_{z}}\mu$$ for some coordinates $x,y,z$ among $x_1,...,x_n$,
it suffices to prove that each such term can be written as a sum of at most 4 terms of the form $\iota_{X_{\alpha^i}}\alpha^i$.
\begin{enumerate}
    \item We first show the statement for the case where $g$ is a total divergence (as a function of $x,y,z$ the other variables being treated as parameters). Let $g= \partial_xf^1 + \partial_y f^2 +\partial_zf^3$ with compactly supported $f^i$.
   Set $X= \phi\partial_x$ for a compactly supported function $\phi \colon \R^n \rightarrow \R$ that is $1$ on the support of $f^1$, and take $Y=\partial_y-f^1\partial_z$. Then $\alpha^1=\iota_{X\wedge Y}\mu$ is 
   compactly supported, and by formula \eqref{eq:contractiondecomposable} we have $\iota_{X_{\alpha^1}}\alpha^1=\partial_xf^1\iota_{\partial_{x}\wedge\partial_{y}\wedge\partial_{z}}\mu$. Similarly we can find $\alpha^2, \alpha^3$ such that 
   $$
   \beta= \iota_{X_{\alpha^1}}\alpha^1+\iota_{X_{\alpha^2}}\alpha^2+\iota_{X_{\alpha^3}}\alpha^3
   $$

\item Consider now $X=h\partial_x$ and $Y=\partial_y-f\partial_z$ for compactly supported $f$ and $h$. Then $[X,Y]\wedge X\wedge Y=h^2\partial_xf{\partial_{x}\wedge\partial_{y}\wedge\partial_{z}}$, hence $\beta = h^2\partial_xf\iota_{\partial_{x}\wedge\partial_{y}\wedge\partial_{z}}\mu$ can be realized as 
\[
\beta = \iota_{X_\alpha}\alpha.
\]
   
   \item  
   Let $U=V\times W$ for a cube $V\subset \mathbb R^3$ and $W\subset \mathbb R^{n-3}$. Let now $g$ be a function with compact support contained in  a cube $C \times D \Subset V\times W$ (Here $\Subset$ denotes relative compactness, i.e. the closure of $C\times D$ is compact in $
   V\times W$), and let $C' \Subset V$ be a larger cube, (i.e. $C \Subset C'$). The function $H=\iiint_{V} gdxdydz$ only depends on the other $n-3$ coordinates and has support in $D$. We exhibit compactly supported functions $f, h$ such that \begin{equation}\label{eq:goededivergentie}
   \iiint_{V}h^2\partial_xf dxdydz = H.
   \end{equation}
   Let $f = x H \chi$ for a compactly supported function $\chi \colon V \rightarrow \R$ which is $1$ on $C'$. (Then $\partial_x f = H$ on $C' \times W$.) Let $h = \phi \psi$ be the product of compactly supported functions $\phi \colon V \rightarrow \R$  and $\psi \colon W \rightarrow \R$ such that $\psi$ is $1$ on $D$ and $\phi$ is constant on $C$, zero outside $C'$, and it satisfies $\iiint_{V}\phi^2 dxdydz = 1$.

    Then 
    \begin{eqnarray*}
    \iiint_{V} h^2 \partial_xf dxdydz &=& 
    H\psi^2 \iiint_{V} \phi^2 \partial_x(x\chi) dxdydz\\
    &=& H\psi^2 \iiint_{V} \phi^2 dxdydz
    \end{eqnarray*}
    because $\chi$ is $1$ whenever $\phi$ is nonzero.
    Since $\psi$ is $1$ whenever $H$ is nonzero, equation 
    \eqref{eq:goededivergentie} follows.

Now the compactly supported function $g-h^2\partial_xf$ integrates to zero over $V$, hence it is a total divergence.
By the parametrized Poincaré Lemma (\ref{lem:PoincParam}), there exist compactly supported smooth functions $f^1,f^2,f^3$ such that
$$ 
g=h^2\partial_xf + \partial_xf^1+\partial_yf^2+\partial_zf^3
$$
Hence $\beta=g\iota_{\partial_{x}\wedge\partial_{y}\wedge\partial_{z}}\mu$ can be realized as the sum of four terms of the type $\iota_{X_\alpha}\alpha$.
\end{enumerate}
\end{proof}

The problem with the above Lemma is that it only works on cubes in $\mathbb R^n$. However, because we want to apply the same technique as in Theorem \ref{Thm:perfect} to globalize the construction, we need to have the statement for any connected subset of $\mathbb R^n$.

\begin{Lemma} \label{lem:localsquares} Let $V\subset \mathbb R^n$ with the canonical volume form $\mu$ and $\beta\in \Omega^{n-3}_c(V)$. Then $\beta$ can be written as a sum $\beta = \sum_{i}\iota_{X_{\alpha^i}}\alpha^i$  of at most $4 \binom{n}{3}$ forms $\alpha^i$ with compact supports in $V$.  
\end{Lemma}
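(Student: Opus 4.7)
The plan is to adapt the three-step argument of Lemma \ref{lem:cubesquares} directly to the open set $V$, preserving the bound of $4\binom{n}{3}$ — which in particular forbids any Ostrand-type partition of unity on $V$, since that would introduce a spurious factor of $n+1$. As in Lemma \ref{lem:cubesquares}, first expand $\beta$ globally in the standard basis,
\[
\beta = \sum_{i<j<k} g_{ijk}\,\iota_{\partial_{x_{i}}\wedge\partial_{x_{j}}\wedge\partial_{x_{k}}}\mu,\qquad g_{ijk}\in C^{\infty}_{c}(V),
\]
which reduces the claim to showing that each of the $\binom{n}{3}$ basic forms $g\,\iota_{\partial_x\wedge\partial_y\wedge\partial_z}\mu$ can be written as a sum of at most $4$ squares with compact support in $V$.

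Inspecting the cube proof, steps (1) and (2) of Lemma \ref{lem:cubesquares} only use compactly supported cut-offs in the ambient set, so they apply verbatim to $V$: step (1) produces $3$ squares in $V$ whenever $g = \partial_xf^1 + \partial_yf^2 + \partial_zf^3$ with $f^i\in C^{\infty}_{c}(V)$, and step (2) produces $1$ square in $V$ whenever $g = h^2\partial_xf$ with $h,f\in C^{\infty}_{c}(V)$. The only remaining task is the analogue of step (3): to realise any $g\in C^{\infty}_{c}(V)$ as
\[
g = h^2\partial_xf + \partial_xf^1 + \partial_yf^2 + \partial_zf^3
\]
with all five functions in $C^{\infty}_{c}(V)$.

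Write $\mathbb R^n = \mathbb R^3_{(x,y,z)}\times W$ with $W = \mathbb R^{n-3}$, let $\pi:V\to W$ be the projection, and set $H(w):= \int_{\mathbb R^3}g(\cdot,w)\,dxdydz\in C^{\infty}_{c}(W)$. The strategy is to choose $h,f\in C^{\infty}_{c}(V)$ whose fibrewise integral $\int_{\mathbb R^3}h^2\partial_xf\,dxdydz$ matches $H(w)$ on every connected component of every $(x,y,z)$-slice of $V$ meeting $\supp(g)$; then the residual $g-h^2\partial_xf$ has vanishing fibre integral on every component of every slice of $V$, and the parametrized compactly supported Poincaré Lemma of Appendix \ref{sec:appendixParameterPoincare} yields compactly supported $f^i\in C^{\infty}_{c}(V)$ with $g-h^2\partial_xf = \partial_xf^1+\partial_yf^2+\partial_zf^3$. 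To construct $h$ and $f$, cover the compact set $\pi(\supp(g))\subset W$ by finitely many open charts $D_j$ on each of which $\pi$ admits a smooth local section $p_j:D_j\to\mathbb R^3$ (with one section per relevant connected component of the fibre); shrinking the $D_j$ and the tube radii $r_j>0$ arranges that the open tubes $T_j := \{(q,w):w\in D_j,\,|q-p_j(w)|<r_j\}\subset V$ are pairwise disjoint. On each $T_j$ one mimics the cube recipe of step (3) — a radial bump $\phi$ on the unit ball with $\int\phi^2 = 1$, a cut-off $\chi$ equal to $1$ on $\supp(\phi)$, a linear factor $(x-p_{j,1}(w))H(w)$, and a squared subordinate partition of unity $\{\tau_j^2\}$ on $W$ satisfying $\sum_j\tau_j^2 = 1$ on $\pi(\supp(g))$ — producing $h_j,f_j\in C^{\infty}_{c}(T_j)$ with $\int_{\mathbb R^3} h_j^2\partial_xf_j\,dxdydz = \tau_j^2(w)H(w)$. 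By disjointness of the $T_j$, the sums $h:=\sum_jh_j$ and $f:=\sum_jf_j$ lie in $C^{\infty}_{c}(V)$ with $h^2 = \sum_jh_j^2$, and fibrewise $\int h^2\partial_xf\,dxdydz = \sum_j\tau_j^2H = H$, as required.

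The main obstacle is the fibrewise-disjoint tube construction: one must place enough tubes inside $V$ to hit, on each fibre $V\cap\pi^{-1}(w)$, every connected component that carries support of $g$, while keeping the tubes mutually disjoint and compactly supported in $V$ — and doing so using only a partition of unity on $W$, never on $V$, lest an $(n+1)$-fold Ostrand factor creep back in. Once the step (3) analogue is in place, the parametrized Poincaré Lemma completes it to $1 + 3 = 4$ squares per basic form, giving the advertised total of at most $4\binom{n}{3}$ squares with compact support in $V$.
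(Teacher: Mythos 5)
Your reduction to a step-(3) analogue on $V$ is where the argument breaks down, and the gap is not a technicality. The parametrized Poincar\'e Lemma~\ref{lem:PoincParam} that you invoke is stated and proved only for a \emph{product} of cubes (its proof runs through the K\"unneth theorem for the product foliation and the compactification $\mathbb R^k\subset S^k$); it does not apply to a general open $V\subset\mathbb R^n$ fibred by the projection $\pi$ onto $\mathbb R^{n-3}$. For such a $V$ the slices $V\cap\pi^{-1}(w)$ are open subsets of $\mathbb R^3$ whose connected components appear, merge and split as $w$ varies. To write the residual $g-h^2\partial_xf$ as $\partial_xf^1+\partial_yf^2+\partial_zf^3$ with $f^i\in C^{\infty}_c(V)$ one needs its integral to vanish over \emph{each connected component of each slice}, with smooth dependence on $w$; your construction only matches the \emph{total} slice integral $H(w)$, since $\sum_j\tau_j^2=1$ distributes $H(w)$ among the tubes rather than matching $\int_C g$ on each component $C$. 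The per-component integrals of $g$ are in general not even continuous in $w$ at parameters where components merge, no finite family of disjoint tubes with smooth sections can hit ``every relevant component'' near such parameters, and no parametrized Poincar\'e Lemma is available in this generality. So the obstacle you flag at the end is a genuine obstruction to this route, not a detail to be filled in.

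The paper avoids all of this with a cut-off trick, which is the real content of the lemma. Extend $\beta$ by zero to a precompact cube $U$ containing $\supp(\beta)$ and apply Lemma~\ref{lem:cubesquares} there, obtaining $\beta=\sum_i\iota_{X_{\tilde\alpha^i}}\tilde\alpha^i$ with $\tilde\alpha^i\in\Omega^{n-2}_c(U)$; crucially, each $\tilde\alpha^i$ produced by that proof is \emph{decomposable}, $\tilde\alpha^i=\iota_{X^i\wedge Y^i}\mu$. Now pick $\chi\in C^{\infty}_c(V)$ with $\chi|_{\supp(\beta)}=1$ and set $\alpha^i:=\chi\tilde\alpha^i=\iota_{X^i\wedge\chi Y^i}\mu$, which has compact support in $V$. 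By formula~\eqref{eq:contractiondecomposable}, rescaling one factor of a decomposable form by $\chi$ rescales its square by $\chi^2$: indeed $[X^i,\chi Y^i]\wedge X^i\wedge\chi Y^i=\chi^2\,[X^i,Y^i]\wedge X^i\wedge Y^i$, because the extra term involving $X^i\chi$ contains $Y^i$ twice and dies in the wedge, so $\iota_{X_{\alpha^i}}\alpha^i=\chi^2\,\iota_{X_{\tilde\alpha^i}}\tilde\alpha^i$. Hence $\sum_i\iota_{X_{\alpha^i}}\alpha^i=\chi^2\beta=\beta$, with the same count $4\binom{n}{3}$, and with no connectedness assumption on $V$ and no control of its slices. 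Decomposability of the squares coming out of the cube lemma is the one structural fact your proposal never exploits.
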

\begin{proof}
Let $U$ be a precompact cube containing $V$. By Lemma \ref{lem:cubesquares} we can find $4 \binom{n}{3}$ forms $\tilde \alpha_i\in \Omega^{n-2}_c(U)$ such that $\beta = \sum_i\iota_{X_{\tilde \alpha^i}}\tilde \alpha^i$. We note that by construction each of the $\tilde \alpha^i$ is the contraction of two vector fields into $\mu$, i.e.  $\tilde\alpha^i=\iota_{X^i\wedge Y^i}\mu$ for some vector fields $X^i,Y^i\in \mathfrak X(U)$. Let $\chi\in C^{\infty}_c(V)$ be a function such that $\chi|_{\supp(\beta)}=1$. We set $\alpha^i=\chi\cdot \tilde\alpha^i=\iota_{X^i \wedge \chi Y^i}\mu$. These clearly have support in $V$. Moreover, they satisfy:
$$\iota_{X_{\alpha^i}}\alpha^i=-\iota_{[\chi X^i,Y^i]\wedge \chi X^i\wedge Y^i}\mu = \chi^2\iota_{X_{\tilde \alpha^i}}\tilde\alpha^i.$$
This means:
$$
\sum_i \iota_{X_{\alpha^i}}\alpha^i= \sum_i\chi^2\iota_{X_{\tilde \alpha^i}}\tilde\alpha^i=\chi^2\beta=\beta.
$$
\end{proof}

We can now prove the global statement:

\begin{Proposition}
    Let $M$ be a manifold of dimension $n$. Then every 
$\beta \in \Omega^{n-3}(M)$ can be written as a sum of at most $4(n+1)\binom{n}{3}$ terms of the form $\iota_{X_{\alpha^i}}\alpha^i$ for $\alpha^i \in \Omega^{n-2}(M)$. If $\beta$ is compactly supported, then each $\alpha^{i}$ can be chosen to be compactly supported as well.
\end{Proposition}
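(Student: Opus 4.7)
The plan is to globalize Lemma~\ref{lem:localsquares} by the same Ostrand-covering argument used for Theorem~\ref{Thm:perfect}. First I would choose a cover $\mathcal{U}$ of $M$ by relatively compact open sets on each of which $\mu$ takes the standard form (Lemma~\ref{Lemma:localcoordinates}). Applying Ostrand's theorem then produces connected open sets $V_{i,k}$, $i\in\{1,\ldots,n+1\}$, $k\in\mathbb{N}$, each contained in an element of $\mathcal{U}$, pairwise disjoint for fixed $i$, and together covering $M$. Setting $W_i=\bigsqcup_k V_{i,k}$, I pick a partition of unity $\{\rho_i\}_{i=1}^{n+1}$ subordinate to $\{W_i\}$, arranged so that $\rho_i\beta|_{V_{i,k}}$ is compactly supported in $V_{i,k}$ for every pair $(i,k)$.

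On each $V_{i,k}$, Lemma~\ref{lem:localsquares} provides forms $\alpha^{i,k,j}\in\Omega^{n-2}_c(V_{i,k})$, $j=1,\ldots,4\binom{n}{3}$, with $\rho_i\beta|_{V_{i,k}}=\sum_j \iota_{X_{\alpha^{i,k,j}}}\alpha^{i,k,j}$. For fixed $i$ and $j$ I would then set $\alpha^{i,j}:=\sum_k \alpha^{i,k,j}$, extended by zero outside $W_i$. Since the family $\{V_{i,k}\}_k$ is disjoint (hence locally finite) and each $V_{i,k}$ is relatively compact, this defines a smooth $(n-2)$-form on $M$; moreover $\alpha^{i,j}$ is compactly supported whenever $\beta$ is, because in that case only finitely many $V_{i,k}$ meet $\supp(\beta)$.

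The decisive observation is that $\supp(X_{\alpha^{i,k,j}})\subseteq\supp(\alpha^{i,k,j})\subseteq V_{i,k}$, so, for fixed $i$, the disjointness of the $V_{i,k}$ forces the cross terms to vanish: $\iota_{X_{\alpha^{i,j}}}\alpha^{i,j}=\sum_{k,l}\iota_{X_{\alpha^{i,k,j}}}\alpha^{i,l,j}=\sum_k \iota_{X_{\alpha^{i,k,j}}}\alpha^{i,k,j}$, which equals $\rho_i\beta$ on $W_i$ and vanishes elsewhere. Summing over $j$ recovers $\rho_i\beta$ as a sum of $4\binom{n}{3}$ terms of the required form, and summing over the $n+1$ indices $i$ yields $\beta=\sum_{i,j}\iota_{X_{\alpha^{i,j}}}\alpha^{i,j}$, a sum of at most $(n+1)\cdot 4\binom{n}{3}=4(n+1)\binom{n}{3}$ such terms.

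The main obstacle is the same bookkeeping that appears in the proof of Theorem~\ref{Thm:perfect}: arranging the Ostrand cover and the subordinate partition of unity so that $\rho_i\beta|_{V_{i,k}}$ is genuinely compactly supported inside $V_{i,k}$ (a standard shrinking using the relative compactness of $V_{i,k}$ inside an element of $\mathcal{U}$), and then tracking supports carefully so that the cross terms in $\iota_{X_{\alpha^{i,j}}}\alpha^{i,j}$ drop out. Once these points are handled, the proof is essentially a cosmetic adaptation of the proof of Theorem~\ref{Thm:perfect}, with $\binom{n}{2}(n+1)$ replaced by $4\binom{n}{3}$ as the local count.
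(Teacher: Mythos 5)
Your proposal is correct and follows essentially the same route as the paper: the same Ostrand covering into $n+1$ disjoint families $V_{i,k}$, a subordinate partition of unity, the local Lemma~\ref{lem:localsquares} on each piece, and the vanishing of cross terms via disjointness of supports for fixed $i$. The only cosmetic caveat is that disjointness of the $V_{i,k}$ does not by itself imply local finiteness, but the paper's own proof invokes exactly the same well-definedness of the infinite sum, so the arguments match.
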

\begin{proof}
As in the proof of Theorem \ref{Thm:perfect}, we pick a covering $\mathcal U$ by (cube-shaped) charts on which $\mu$ has canonical form. We again apply Ostrand's theorem \cite[Lemma 3]{ostrandCoveringDimensionGeneral1971} to obtain a finite covering $W_i$ ($i\in \{1,...,n+1\}$) each of which is a countable disjoint union $W_i=\bigsqcup_k V_{i,k}$ of connected subsets of elements of $\mathcal U$.

Using a partition of unity, we can write 
$\beta = \sum_{i=1}^{n+1}\sum_{k=1}^{\infty}\beta_{i,k}$ with $\supp(\beta_{i,k})\subseteq V_{i,k}$. By Lemma~\ref{lem:localsquares}, every 
(compactly supported) $\beta_{i,k} \in \Omega^{n-3}_{c}(V_{i,k})$ can be written as a finite sum 
\[\beta_{i,k} = \sum_{j=1}^{4{\binom{n}{3}}} \iota_{X_{\alpha_{i,k}^{j}}} \alpha_{i,k}^{j}\]
for some compactly supported $\alpha_{i,k}^{j} \in \Omega^{n-2}_{c}(V_{i,k})$.
Note that the infinite sum
\[\alpha_{i}^{j} := \sum_{k=1}^{\infty} \alpha_{i,k}^{j}\]
is well defined because the $V_{i,k}$
are mutually disjoint for fixed $i$.

If we set $\beta_i := \sum_{k=1}^{\infty} \beta_{i,k}$, then 
\begin{eqnarray*}
\sum_{j=1}^{4{\binom{n}{3}}} \iota_{X_{\alpha^{j}_{i}}}\alpha^{j}_{i}
=
\sum_{j=1}^{4{\binom{n}{3}}} 
    \sum_{k=1}^{\infty} \iota_{X_{\alpha^{j}_{i,k}}}
    \sum_{l=1}^{\infty} \alpha^{j}_{i,l}
=
\sum_{j=1}^{4{\binom{n}{3}}} 
    \sum_{k=1}^{\infty} \iota_{X_{\alpha^{j}_{i,k}}} \alpha^{j}_{i,k}
    =
    \sum_{k=1}^{\infty} 
    \sum_{j=1}^{4{\binom{n}{3}}} \iota_{X_{\alpha^{j}_{i,k}}} \alpha^{j}_{i,k}
    =
    \sum_{k=1}^{\infty}\beta_{i,k} = \beta_i.
\end{eqnarray*}
Since $\beta = \sum_{i=1}^{n+1} \beta_{i}$, this concludes the non-compactly supported case. If $\beta$ is compactly supported, one can arrange that only a finite number of the $\beta_{i,k}$ are non-zero. Then we can arrange that only finitely many $\alpha_{i,k}^j$ are non-zero, such that $\alpha_i^j$ are compactly supported. 
\end{proof}

Since $d\beta = \sum_{i}d \iota_{X_{\alpha^{i}}}\alpha^i = \sum_{i}[\alpha^i, \alpha^i]$, we have the following theorem as a direct consequence:
\begin{Theorem}\label{Thm:squares}
The ideal of squares in $\Omega^{n-2}(M)$ is equal to the space of exact forms $ d\Omega^{n-3}(M)$ and the ideal of squares in $\Omega^{n-2}_c(M)$ is equal to $ d\Omega^{n-3}_c(M)$.     
\end{Theorem}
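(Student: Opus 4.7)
The plan is to deduce the theorem directly from the Proposition immediately preceding it, together with a short Cartan-calculus computation showing the reverse inclusion is the nontrivial half.

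First I would establish the easy inclusion: every square lies in the space of exact forms. For any $\alpha \in \Omega^{n-2}(M)$, Cartan's magic formula yields
\[
[\alpha,\alpha] = L_{X_{\alpha}}\alpha = d\iota_{X_{\alpha}}\alpha + \iota_{X_{\alpha}}d\alpha.
\]
Since $d\alpha = \iota_{X_{\alpha}}\mu$ by the definition of $X_\alpha$, we have $\iota_{X_{\alpha}}d\alpha = \iota_{X_{\alpha}}\iota_{X_{\alpha}}\mu = 0$, so $[\alpha,\alpha] = d\iota_{X_{\alpha}}\alpha$ is exact. Hence the ideal of squares (which, being a left ideal, is in any case contained in the smallest $d$-closed subspace containing the squares, but here the squares themselves are already exact) is contained in $d\Omega^{n-3}(M)$, and likewise the compactly supported ideal of squares is contained in $d\Omega^{n-3}_{c}(M)$.

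For the reverse inclusion I would use the Proposition verbatim: given $d\beta$ with $\beta \in \Omega^{n-3}(M)$, write $\beta = \sum_{i} \iota_{X_{\alpha^i}}\alpha^i$ for finitely many $\alpha^i \in \Omega^{n-2}(M)$. Applying $d$ and invoking the identity $d\iota_{X_{\alpha^i}}\alpha^i = [\alpha^i,\alpha^i]$ established above gives
\[
d\beta = \sum_{i} d\iota_{X_{\alpha^i}}\alpha^i = \sum_{i} [\alpha^i,\alpha^i],
\]
which exhibits $d\beta$ as an actual finite sum of squares, hence a fortiori an element of the ideal of squares. In the compactly supported situation, the Proposition guarantees that we may take each $\alpha^i$ compactly supported whenever $\beta$ is, so the same identity gives $d\beta \in \Omega^{n-2}_{c}(M)^{\sq}$.

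There is essentially no obstacle here: all the work has already been absorbed into the preceding Proposition (and, through it, into the local Lemmas~\ref{lem:cubesquares} and~\ref{lem:localsquares} together with Ostrand's covering argument used to globalise). The only subtlety worth flagging is the need to know that $\iota_{X_\alpha}d\alpha$ vanishes — which is immediate from $d\alpha = \iota_{X_\alpha}\mu$ and the fact that $\iota_X \iota_X = 0$ on forms — and the observation that the exact forms obtained from the Proposition are in fact sums of squares, not merely elements of the left ideal they generate, which makes the characterisation crisp.
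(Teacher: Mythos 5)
Your proposal is correct and follows essentially the same route as the paper: the easy inclusion via $[\alpha,\alpha]=L_{X_\alpha}\alpha=d\iota_{X_\alpha}\alpha$ (using $\iota_{X_\alpha}\iota_{X_\alpha}\mu=0$), and the reverse inclusion by applying $d$ to the decomposition $\beta=\sum_i\iota_{X_{\alpha^i}}\alpha^i$ from the preceding Proposition, which exhibits every exact form as a genuine finite sum of squares. The paper likewise treats the theorem as a direct consequence of that Proposition.
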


\section{\texorpdfstring{The Lie algebra ${\Omega^{n-2}(M)}/{d\Omega^{n-3}(M)}$ is centrally closed}{The quotient Lie algebra is centrally closed}}\label{sec:closed}

We recall that for brevity we denote $\Omega^{k}(M)/d\Omega^{k-1}(M)$ by $\oline{\Omega}{}^{k}(M)$.

The goal of this section is to show that the second jointly continuous Lie algebra cohomology of $\oline{\Omega}{}^{n-2}(M)$ vanishes. 
Since $\oline{\Omega}{}^{n-2}(M)$ is a central extension of the perfect Lie algebra $\mathfrak X_{\ex}(M,\mu)$ (see \eqref{hof}), this will imply that the former is the universal central extension of the latter.

\subsection{Continuous cohomology for Lie and Leibniz algebras}\label{3.1}

We recall the Chevalley-Eilenberg Lie algebra cohomology in the continuous setting, as well as the corresponding Leibniz algebra cohomology.

\paragraph{Lie algebra cohomology.}

Let $\fg$ be a topological Lie algebra. Then a  $\mathfrak g$-module is a topological vector space 
$\mathfrak M$, together with a left $\fg$-action $(x, m) \mapsto x\cdot m$ that is continuous as a bilinear map $\mathfrak g\times \mathfrak M\to \mathfrak M$. 
The Chevalley-Eilenberg differential on the complex 
$C^n(\mathfrak{g},\mathfrak M)$ of continuous alternating $n$-linear maps $\psi \colon \fg^n \rightarrow \mathfrak M$  is given by 
\begin{align}\label{CE}
\dd \psi(x_1,\ldots,x_{n+1}):= &\sum_{ i<j}
(-1)^{i+j} \psi([x_i,x_j],x_1,\ldots,\widehat{x}_i, \ldots, 
\widehat{x}_j, \ldots, x_{n+1})\\ + &\sum _i (-1)^{i+1} {x_i}\cdot\psi(x_1,\ldots,\widehat{x}_i, \ldots, x_{n+1}),\nonumber
\end{align}
so in particular $\dd \psi(x)=x\cdot \psi$ for $\psi \in C^{0}(\fg, \mathfrak M)\simeq \mathfrak{M}$.
The cohomology of this complex, denoted  by $H^n(\mathfrak{g},\mathfrak M)$,
is called the \emph{continuous Lie algebra cohomology} of the locally convex Lie algebra $\mathfrak{g}$. 
In the same vein, we denote by $H_{\mathrm{alg}}^n(\mathfrak{g},\mathfrak M)$ the cohomology of the complex 
$C_{\mathrm{alg}}^n(\fg,\mathfrak{M})$ of alternating linear maps without continuity assumptions.

For the trivial representation $\mathfrak M=\mathbb R$, the second term in Equation \eqref{CE} vanishes and we obtain the continuous Lie algebra cohomology $H^n(\mathfrak{g},\R)$ with trivial coefficients. In degree 1 this cohomology $H^1(\mathfrak{g},\R)$ is the topological dual of the abelian Lie algebra 
$(\fg/\overline{[\fg,\fg]})$, where $\overline{[\fg,\fg]}$ is the closure of the commutator ideal.  In particular, a locally convex Lie algebra is 
topologically perfect ($\fg = \overline{[\fg,\fg]}$) if and only if $H^1(\fg,\R)$ vanishes.

Since the cohomology in degree 2 classifies the continuous central extensions of $\fg$ (cf. Section \ref{sec:results}), we call a locally convex Lie algebra $\mathfrak g$ \emph{centrally closed} if $H^2(\mathfrak g, \mathbb R)=0$.

\paragraph{Leibniz algebra cohomology.}
A topological left Leibniz algebra is a locally convex vector space $\mathfrak L$ with a continuous bilinear map 
$[\,\cdot\,, \,\cdot\,]:\mathfrak L\times \mathfrak L\to \mathfrak L$ such that the left Jacobi identity holds:
$$[x,[y,z]]=[[x,y],z]+[y,[x,z]].$$
There are various conventions for Leibniz cohomology of Leibniz algebras. Here we follow 
\cite{FeldvossWagemann21}.

Let $\fL$ be a topological left Leibniz algebra. A left-module for $\fL$ is a topological vector space $\mathfrak M$ with a continuous left action 
$\fL\times \mathfrak M\to \mathfrak M$ satisfying $[x,y] \cdot m=x\cdot (y \cdot m)- y\cdot (x \cdot m)$. The Loday complex is the complex $CL^n(\fL,\mathfrak M)$ of  jointly continuous $n$-linear maps $\fL^n \rightarrow \mathfrak{M}$. The differential is given by a convenient rewriting of \eqref{CE}:
\begin{align*}
\dd \psi(x_1, \ldots x_{n+1}) = &\sum_{i<j}(-1)^i\psi(x_1,\ldots,\widehat{x_i},\ldots,[x_i,x_j],\ldots,x_{n+1}),
\\ + &\sum _i (-1)^{i+1} {x_i}\cdot\psi(x_1,\ldots,\widehat{x}_i, \ldots, x_{n+1})
\end{align*}
where the term $[x_i,x_j]$ is placed in the $j$-th position.
We denote the cohomology of this complex by $HL^\bullet(\fL,\mathfrak M)$.
Similarly, we denote by $HL_{\mathrm{alg}}^\bullet(\fL,\mathfrak M)$ the cohomology of the Loday complex $CL_{\mathrm{alg}}^n(\fL,\mathfrak M)$
of multilinear maps without continuity assumptions.

Recall that the ideal of squares (also called the Leibniz kernel) $\fL^{\sq}$ is the left ideal spanned by elements of the form $[x,x]$ for $x\in \fL$. Let $\fL_{\Lie} = \fL / \oline{\fL}{}^{\sq}$ be the largest quotient of $\fL$ that is a Hausdorff locally convex Lie algebra.
The projection $\pi \colon \fL \rightarrow \fL_{\Lie}$ gives a pullback map 
$\pi^*\colon C^n(\fL_{\Lie},\R)\rightarrow CL^n(\fL,\R)$,
which is a chain map from the Chevalley-Eilenberg complex to the Loday complex.
In particular, we have maps on the level of cohomology groups
\[
\pi^* \colon H^{\bullet}(\fL_{\Lie},\R) \rightarrow HL^{\bullet}(\fL,\R).
\]

Later in the article, we will show exactness of cocycles in $\fL_{\Lie}$ by showing the corresponding exactness in $\fL$. For this to work, we will need the following result:

\begin{Proposition}\label{prop:inj} The map $\pi^* \colon H^{n}(\fL_{\Lie},\R) \rightarrow HL^{n}(\fL,\R)$ is injective for $n=1$ and $n=2$.
\end{Proposition}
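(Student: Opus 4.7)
For $n=1$ the statement is essentially immediate: continuous $1$-cocycles on either side (with trivial coefficients) are just continuous linear functionals vanishing on the commutator bracket, and $\pi^*$ is the restriction of the pullback of linear functionals along the surjection $\pi\colon \fL \to \fL_{\Lie}$, hence injective.

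The content is at $n=2$. Suppose $\omega \in Z^2(\fL_{\Lie}, \R)$ pulls back to a Leibniz coboundary, i.e.\ $\pi^*\omega = \dd \phi$ for some continuous $\phi \in CL^1(\fL, \R)$. Unwinding the differential this reads
\[
\omega(\pi(x), \pi(y)) = -\phi([x,y]), \qquad x, y \in \fL.
\]
The plan is to show that $\phi$ descends to a continuous linear functional $\tilde\phi\colon \fL_{\Lie} \to \R$; the same identity then gives $\omega = \dd \tilde\phi$ on $\fL_{\Lie}$ and we are done. Since $\fL_{\Lie} = \fL / \oline{\fL}{}^{\sq}$, descending $\phi$ through $\pi$ is equivalent to $\phi$ vanishing on $\oline{\fL}{}^{\sq}$, and by continuity it suffices to prove $\phi|_{\fL^{\sq}} = 0$.

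Two ingredients suffice. First, the alternation of $\omega$ gives $\phi([x,x]) = 0$, which polarises to the antisymmetry relation $\phi([x,y]) = -\phi([y,x])$. Second -- and this is the algebraic heart of the argument -- every element of $\fL^{\sq}$ lies in the right annihilator of the bracket, so that $[z, y] = 0$ for all $z \in \fL^{\sq}$ and $y \in \fL$. The starting point is the left Leibniz identity with $a = b = x$, which collapses immediately to $[[x,x], y] = 0$; an induction using Leibniz once more (applied to $[r, [z', y]]$ with $z'$ a previous element of $\fL^{\sq}$) propagates annihilation through the whole left ideal generated by the squares. Combining the two ingredients: for a generator $[x,x]$ the functional vanishes by antisymmetry, and for $z = [y, z']$ with $z' \in \fL^{\sq}$ one computes
\[
\phi([y, z']) = -\phi([z', y]) = 0
\]
using antisymmetry and the annihilator property; linearity extends this to all of $\fL^{\sq}$.

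Once $\phi$ vanishes on $\oline{\fL}{}^{\sq}$, the universal property of the quotient topology yields a continuous $\tilde\phi\colon \fL_{\Lie} \to \R$ with $\tilde\phi \circ \pi = \phi$, and substitution gives $\omega(\pi(x), \pi(y)) = -\tilde\phi([\pi(x), \pi(y)]) = \dd\tilde\phi(\pi(x), \pi(y))$, completing the proof. The main obstacle is the algebraic identification of the vanishing locus of $\phi$; once the right-annihilator identity $[[x,x], y] = 0$ is in hand, both the base case and the inductive step fall into place, and the topological descent from $\fL^{\sq}$ to its closure is then automatic.
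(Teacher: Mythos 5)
Your proof is correct and follows essentially the same route as the paper: show that the $1$-cochain $\phi$ with $\pi^*\omega = \dd\phi$ vanishes on $\fL^{\sq}$, pass to the closure by continuity, and descend to a primitive on $\fL_{\Lie} = \fL/\oline{\fL}{}^{\sq}$. The only difference is that you spell out the step the paper leaves implicit --- namely that vanishing on the generating squares $[x,x]$ forces vanishing on the entire left ideal, via the annihilation identity $[[x,x],y]=0$ and polarization --- which is a welcome elaboration rather than a different argument.
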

\begin{proof}
The statement in degree one follows from the fact that the pullback $\pi^* \colon  C^1(\fL_{\Lie},\R) \to CL^1(\fL,\R)$ is injective, since there are no coboundaries to divide out. Here $ C^1(\fL_{\Lie},\R)$ is just the continuous dual $\fL_{\Lie}'$ of $\fL_{\Lie}$ and $CL^1(\fL,\R)$ the continuous dual $\fL'$ of $\fL$.

Let $[\psi] \in H^2(\fL_{\Lie},\R)$. If $\pi^*[\psi] = 0$, then 
$\pi^*\psi = \dd c$ for $c \in \fL'$.
So $c([\alpha,\alpha]) = \psi(\pi(\alpha), \pi(\alpha))$ which is zero 
because $\psi$ is skew-symmetric. This means that $c \colon \fL \rightarrow \R$ vanishes on $\fL^{\sq}$, and hence on $\oline{\fL}{}^{\sq}$ because $c$ is continuous. So $c$ induces a continuous map $\fL_{\Lie} = \fL/ \oline{\fL}{}^{\sq}\rightarrow \R$, which is automatically a primitive of $\psi$, hence $[\psi] = 0$ in 
$H^2(\fL_{\Lie},\R)$.
\end{proof}

\begin{Remark}
We will be mainly interested in the cases $\fL = \Omega^{n-2}_{c}(M)$ and $\fL = \Omega^{n-2}(M)$, where 
$\fL^{\sq}$ is closed by Theorem~\ref{Thm:squares} and 
Lemma \ref{exareclosed}.
\end{Remark}

Let $\fL'$ be the continuous dual of $\fL$, equipped with the coadjoint action $(x \cdot T) y := -T([x,y])$.
We close this subsection by noting that to any 2-cochain $\psi\in CL^2(\fL, \R)$, we can associate a (not necessarily continuous) 1-cochain $\widehat{\psi}\in CL_{\mathrm{alg}}^1(\fL,\fL')$ by $\widehat\psi(x)y=\psi(x,y)$. 
Similarly, a 1-cochain $\eta \in CL^1(\fL,\R)$ corresponds to a 0-cochain $\widehat{\eta} \in CL_{\mathrm{alg}}^0(\fL,\fL')$. 
Then $\widehat{\psi}$ is a cocycle if and only if $\psi$ is a cocycle, and that $\psi = \dd \eta$ if and only if 
$\widehat{\psi} = \dd \widehat{\eta}$. Since the map $CL^1(\fL,\R) \rightarrow  CL_{\mathrm{alg}}^0(\fL,\fL')$ is bijective, we have the following:

\begin{Lemma}\label{lem:2coc1coc} 
The map $HL^2(\fL,\R) \hookrightarrow HL_{\mathrm{alg}}^1(\fL,\fL')$ defined by $[\psi] \mapsto [\widehat{\psi}]$ is injective.
\end{Lemma}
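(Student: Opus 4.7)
The plan is to chase the identifications stated in the paragraph preceding the lemma and check that every coboundary in the algebraic complex with coadjoint coefficients actually comes from a continuous primitive. Suppose $\psi \in CL^2(\fL,\R)$ is a cocycle and that $[\widehat{\psi}] = 0$ in $HL^1_{\mathrm{alg}}(\fL,\fL')$. Then there exists $\widehat{\eta} \in CL^0_{\mathrm{alg}}(\fL,\fL') = \fL'$ with $\widehat{\psi} = \dd \widehat{\eta}$. I will show that the corresponding $\eta$ lies in $CL^1(\fL,\R)$ and satisfies $\psi = \dd \eta$, which gives $[\psi]=0$ in $HL^2(\fL,\R)$.

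The key observation is the following bijection. By the paper's convention, $CL^0_{\mathrm{alg}}(\fL,\fL') = \fL'$ is just the continuous dual (there is no continuity hypothesis on a $0$-linear map, but the value is already required to lie in the continuous dual $\fL'$). On the other side, $CL^1(\fL,\R) = \fL'$ is by definition the continuous dual. Hence the correspondence $\eta \leftrightarrow \widehat{\eta}$ between $CL^1(\fL,\R)$ and $CL^0_{\mathrm{alg}}(\fL,\fL')$ is the identity map of $\fL'$, and in particular is a bijection. This is the content of the remark that "the map $CL^1(\fL,\R) \to CL^0_{\mathrm{alg}}(\fL,\fL')$ is bijective."

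Using the formulas for the Loday differentials, a brief computation confirms the two facts quoted in the paragraph before the lemma: on the one hand $(\dd \widehat{\psi})(x_1,x_2)(y) = -\psi([x_1,x_2],y) - \psi(x_2,[x_1,y]) + \psi(x_1,[x_2,y]) = (\dd\psi)(x_1,x_2,y)$, so cocycles correspond to cocycles; on the other hand $(\dd \widehat{\eta})(x)(y) = (x\cdot\widehat{\eta})(y) = -\eta([x,y]) = (\dd\eta)(x,y)$, so coboundaries correspond to coboundaries. Combining these gives the desired injectivity: from $\widehat{\psi} = \dd \widehat{\eta}$ with $\widehat{\eta} \in \fL'$, we set $\eta := \widehat{\eta} \in CL^1(\fL,\R)$ and conclude $\psi = \dd\eta$.

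There is no real obstacle here; the content of the lemma is almost entirely bookkeeping, and the only mild subtlety is that one must notice that the target space $\fL'$ already forces the continuity of the primitive $\widehat{\eta}$ "for free," so no additional regularity argument is needed to promote an algebraic primitive to a continuous one.
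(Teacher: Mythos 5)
Your proof is correct and follows essentially the same route as the paper, which derives the lemma directly from the observations in the preceding paragraph: cocycles correspond to cocycles, coboundaries to coboundaries, and $CL^1(\fL,\R)\to CL^0_{\mathrm{alg}}(\fL,\fL')$ is the identity of $\fL'$, so an algebraic primitive of $\widehat\psi$ is automatically a continuous primitive of $\psi$. Your explicit verification of the differential formulas and your emphasis on the target $\fL'$ forcing continuity ``for free'' are exactly the points the paper leaves implicit.
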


This statement is an instance of a much more general phenomenon, cf.\ e.g. \cite[Corollary 1.5]{FeldvossWagemann21}.

\subsection{The perfectness trick}

Lie algebra cohomology with diagonal cocycles is extensively developed in the monograph \cite{FuksCohomologyOfInfinite1986}.
The perfectness trick refers to a reasoning that infers that every continuous 2-cocycle is diagonal 
directly from the perfectness of a Lie algebra (and the fact that the bilinear map given by its bracket is diagonal).
It was used in \cite{JV16} for the Poisson bracket on functions on a symplectic manifold
and in \cite{janssensNiestijl2024} for the Lie bracket on vector fields on an arbitrary manifold.
Here we extend it to the left Leibniz bracket on differential $(n-2)$-forms associated with a volume form.

To any continuous 2-cocycle $\psi$ on the Leibniz algebra $\Omega_c^{n-2}(M)$, one associates  
a continuous linear map with values in the continuous linear dual $\Omega_c^{n-2}(M)'$:
\begin{equation}\label{psihat}
\widehat\psi:\Omega^{n-2}_{c}(M)\to \Omega^{n-2}_{c}(M)',\quad \widehat\psi(\alpha)\beta:=\psi(\alpha,\beta).
\end{equation}
It is a 1-cocycle on the Leibniz algebra $\Omega_c^{n-2}(M)$, 
for the action $(\alpha\cdot T)(\beta)=-T([\alpha,\beta])=-T(L_{X_\alpha \beta})$ on $\Omega_c^{n-2}(M)'$ that is dual to the adjoint Leibniz algebra action, i.e.
\begin{equation}\label{eq:OneCocyclefirst}
\widehat\psi([\alpha,\beta]) = \alpha \cdot \widehat\psi(\beta) - \beta \cdot \widehat\psi(\alpha),\quad\forall\alpha,\beta\in\Omega_c^{n-2}(M).
\end{equation}
A Leibniz 2-cocycle $\psi$ on  $\Omega_c^{n-2}(M)$  is called diagonal 
if $\psi( \alpha,\beta) =0$ whenever $\supp( \alpha ) \cap\supp(\beta)=\emptyset$.
This implies that the induced 1-cocycle is support-decreasing:
 $\supp(\widehat\psi(\alpha))\subseteq \supp(\alpha)$ for every $\alpha\in\Omega_c^{n-2}(M)$.

\begin{Proposition}\label{Lemma:ClosedImpliesLocal}
Let $n\geq 3$. Then
any continuous 2-cocycle $\psi$ on the Leibniz algebra $\Omega^{n-2}_{c}(M)$ is diagonal.
Moreover, the Leibniz 1-cocycle $\widehat\psi$ in \eqref{psihat} is a distribution-valued differential operator of locally finite order on $\Omega_{c}^{n-2}(M)$.
\end{Proposition}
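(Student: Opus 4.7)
The plan is to apply the ``perfectness trick'' to the associated 1-cocycle $\widehat\psi$ from \eqref{psihat}. The crucial input will be the \emph{local} perfectness provided by Lemma~\ref{Lemma:localperfect}, which lets us realise an element supported in a chart $U$ by commutators of elements supported in the same chart $U$; once diagonality is established, the differential-operator statement will follow by a Peetre-type theorem for distribution-valued local maps.

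For diagonality, take $\alpha, \beta \in \Omega^{n-2}_{c}(M)$ with disjoint supports. First I would cover $\supp(\alpha)$ by finitely many coordinate patches $U_{i}$ that are disjoint from $\supp(\beta)$ and on which $\mu$ takes the standard form (Lemma~\ref{Lemma:localcoordinates}). A subordinate partition of unity decomposes $\alpha = \sum_{i}\alpha_{i}$ with $\alpha_{i} \in \Omega^{n-2}_{c}(U_{i})$. By Lemma~\ref{Lemma:localperfect}, each $\alpha_{i}$ is a finite sum of commutators $\sum_{j} [\gamma_{i,j}, \delta_{i,j}]$ with $\gamma_{i,j}, \delta_{i,j} \in \Omega^{n-2}_{c}(U_{i})$. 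The Leibniz 2-cocycle identity from \eqref{CE} gives
\[
\psi([\gamma, \delta], \beta) = \psi(\gamma, [\delta, \beta]) - \psi(\delta, [\gamma, \beta]).
\]
Because $X_{\delta}$ is determined by $\iota_{X_{\delta}}\mu = d\delta$ and hence supported in $\supp(\delta) \subseteq U_{i}$, the form $[\delta, \beta] = L_{X_{\delta}}\beta$ has support contained in $U_{i} \cap \supp(\beta) = \emptyset$ and so vanishes; similarly for $[\gamma, \beta]$. Hence $\psi(\alpha_{i}, \beta) = 0$ for every $i$ and $\psi(\alpha, \beta) = 0$ by linearity; equivalently, $\widehat\psi$ is support-decreasing.

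For the second claim, note that $\widehat\psi \colon \Omega^{n-2}_{c}(M) \to \Omega^{n-2}_{c}(M)'$ is continuous by hypothesis and now support-decreasing, hence local in the sheaf-theoretic sense. A Peetre-type theorem for distribution-valued local linear maps on spaces of compactly supported smooth sections then guarantees that the restriction of $\widehat\psi$ to forms supported in any relatively compact open set is a differential operator of finite order, which is exactly the meaning of ``locally finite order''.

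The main obstacle is the diagonality step, and specifically the requirement that each $\alpha_{i}$ be written as a commutator sum whose \emph{individual} constituents $\gamma_{i,j}, \delta_{i,j}$ are all supported in the same chart $U_{i}$. The global perfectness Theorem~\ref{Thm:perfect} would not suffice: it is precisely the support-preserving local perfectness of Lemma~\ref{Lemma:localperfect} that forces $[\gamma_{i,j}, \beta]$ and $[\delta_{i,j}, \beta]$ to vanish and makes the cocycle identity collapse. The subsequent Peetre step is essentially standard.
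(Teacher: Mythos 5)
Your proposal is correct and follows essentially the same route as the paper: write $\alpha$ as a sum of commutators of forms supported away from $\supp(\beta)$, observe that the brackets $[\gamma,\beta]$ and $[\delta,\beta]$ vanish by disjointness of supports, collapse the cocycle identity, and then invoke the vector-bundle Peetre theorem for the continuous, support-decreasing map $\widehat\psi$. The only cosmetic difference is that the paper applies the global perfectness theorem to the open submanifold $U\supseteq\supp(\alpha)$ in one stroke, whereas you inline the partition-of-unity reduction to the local perfectness lemma; both are valid.
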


\begin{proof}
Let $\psi$ be a Leibniz 2-cocycle on $\Omega_{c}^{n-2}(M)$, thus 
$\psi(\alpha,[\beta,\gamma])=\psi([\alpha,\beta],\gamma)+\psi(\beta,[\alpha,\gamma])$.
For $\alpha, \beta \in \Omega^{n-2}_{c}(M)$ with $\supp(\alpha) \cap \supp(\beta) = \emptyset$, let $U \subseteq M$ be open with 
$\supp(\alpha) \subseteq U$ and $\supp(\beta) \cap U = \emptyset$. 
By Theorem \ref{Thm:perfect}, $\Omega^{n-2}_{c}(U)$ is perfect, thus we can write $\alpha = \sum_{i=1}^{N} [\alpha'_i, \alpha''_i]$ for 
$\alpha'_i, \alpha''_i \in \Omega^{n-2}_{c}(M)$ with support contained in $U$. By the above cocycle identity, we then have
\begin{equation*}
\psi(\alpha,\beta) = \sum_{i=1}^{N} \psi([\alpha'_i, \alpha''_i],\beta)
 = \sum_{i=1}^{N}  \psi(\alpha'_i,[\alpha''_i,\beta])-\psi(\alpha''_i,[\alpha'_i ,\beta]) ,
\end{equation*}
which is zero because $\alpha'_i$ and $\alpha''_i$ have support which is disjoint from $\beta$. 
Thus $\psi$ is diagonal.

The second statement is a consequence of the first, by applying to $\widehat\psi$ Peetre's Theorem, more precisely its vector-bundle version, cf.\ Theorem \ref{thm:peetre} in Appendix \ref{appendix:poincare}, which asserts that support-decreasing continuous linear maps from the compactly supported section space of a vector bundle to the continuous dual of the compactly supported section space of a vector bundle is a distribution-valued differential operator of locally finite order.
\end{proof}

Every continuous Lie algebra 2-cocycle $\phi$ on $\fX_{c}(M,\mu)$ lifts via the projection $q$ in \eqref{qq} to a Leibniz 2-cocycle $\psi$ on $\Omega^{n-2}_{c}(M)$,
which in addition is skew. 
As in the previous section we associate the continuous Leibniz 1-cocycle $\widehat\psi:\Omega^{n-2}_{c}(M)\to\Omega^{n-2}_{c}(M)'$,
which in addition vanishes on $ d\Omega_{c}^{n-3}(M)$.
By the perfectness trick in Proposition \ref{Lemma:ClosedImpliesLocal}, the Leibniz 2-cocycle $\psi$ is diagonal and 
the Leibniz 1-cocycle $\widehat\psi$ is a continuous differential operator of locally finite order on $\Omega_{c}^{n-2}(M)$.

For proving the results about central extensions of the Lie algebra of divergence-free vector fields, we need to use differential forms with polynomial coefficients,
hence without compact support.
Lemma \ref{lem:sheaf} in Appendix B allows to extend the map $\widehat\psi$ to a continuous differential operator, which we denote by the same letter $\widehat\psi$.

\begin{Lemma}\label{lemma:extendpsi}
Let $\phi\in C^2(\mathfrak X_c(M),\R)$ be a cocycle and $\psi=\pi^*\phi$ as above. Then $\widehat \psi\colon \Omega^{n-2}_c(M) \rightarrow \Omega^{n-2}_{c}(M)'$ can be uniquely extended to an operator
\begin{equation}\label{doi}
\widehat\psi \colon \Omega^{n-2}(M) \rightarrow \Omega^{n-2}_{c}(M)'
\end{equation}
with the following properties:
\begin{itemize}
    \item[(i)] It is a cocycle for the Leibniz algebra $\Omega^{n-2}(M)$.
    \item[(ii)] It vanishes on $d\Omega^{n-2}_c(M)$.
\end{itemize}
\end{Lemma}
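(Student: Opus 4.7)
The plan is to combine the sheaf-theoretic extension from Lemma~\ref{lem:sheaf} with a cutoff argument that leverages the diagonal property of $\psi$ established in Proposition~\ref{Lemma:ClosedImpliesLocal}. The extension will be given concretely by $\widehat\psi(\alpha)(\gamma):=\psi(\chi\alpha,\gamma)$, where $\alpha\in\Omega^{n-2}(M)$, $\gamma\in\Omega^{n-2}_c(M)$, and $\chi\in C^\infty_c(M)$ is any cutoff with $\chi\equiv 1$ on a neighbourhood of $\supp(\gamma)$. The main obstacle will be the Leibniz cocycle identity in (i), which requires carefully matching non-compactly supported brackets with compactly supported representatives near $\supp(\gamma)$ and then invoking locality of $\widehat\psi$.

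\textbf{Extension.} By Proposition~\ref{Lemma:ClosedImpliesLocal}, $\widehat\psi$ is a distribution-valued differential operator of locally finite order, hence a local operator, on $\Omega^{n-2}_c(M)$. Lemma~\ref{lem:sheaf} then supplies a unique continuous extension $\widehat\psi\colon \Omega^{n-2}(M)\to \Omega^{n-2}_c(M)'$ as a differential operator. The formula above realises this extension: if $\chi'$ is any other admissible cutoff, then $(\chi-\chi')\alpha$ has support disjoint from $\supp(\gamma)$, so $\psi((\chi-\chi')\alpha,\gamma)=0$ by the diagonal property, making the right-hand side independent of $\chi$. Uniqueness is then automatic, since any admissible extension must agree with $\widehat\psi$ on the compactly supported form $\chi\alpha$ and must take the same value on $\alpha$ by locality.

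\textbf{Cocycle identity (i).} Given $\alpha,\beta\in\Omega^{n-2}(M)$ and $\gamma\in \Omega^{n-2}_c(M)$, fix $\chi$ as above. On a neighbourhood of $\supp(\gamma)$ one has $d(\chi\alpha)=d\alpha$, hence $X_{\chi\alpha}=X_\alpha$, and similarly $X_{\chi\beta}=X_\beta$ and $\chi\beta=\beta$; consequently $[\chi\alpha,\chi\beta]$ coincides with $[\alpha,\beta]$ on a neighbourhood of $\supp(\gamma)$, while the compactly supported forms $[\chi\alpha,\gamma]$ and $[\chi\beta,\gamma]$ coincide with $[\alpha,\gamma]$ and $[\beta,\gamma]$ respectively. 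Feeding these identifications, which are all justified by the diagonal property, into the Leibniz cocycle identity for the original $\widehat\psi$ at the triple $(\chi\alpha,\chi\beta,\gamma)\in\Omega^{n-2}_c(M)^3$ yields
\[
\widehat\psi([\alpha,\beta])(\gamma)=-\widehat\psi(\beta)([\alpha,\gamma])+\widehat\psi(\alpha)([\beta,\gamma])=\bigl(\alpha\cdot\widehat\psi(\beta)-\beta\cdot\widehat\psi(\alpha)\bigr)(\gamma),
\]
which is exactly the extended cocycle identity.

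\textbf{Vanishing on exact forms (ii).} For $\alpha=d\eta$ with $\eta\in\Omega^{n-3}(M)$, decompose $\chi\alpha=d(\chi\eta)-d\chi\wedge\eta$. The first summand lies in $d\Omega^{n-3}_c(M)$, on which $\psi(\,\cdot\,,\gamma)$ vanishes, because $\psi$ descends via $q$ from a Lie-algebra cocycle on $\ceX(M,\mu)$ and hence is annihilated by exact potentials. The second summand is supported in $\supp(d\chi)$, which is disjoint from $\supp(\gamma)$, so the diagonal property annihilates its contribution. Therefore $\widehat\psi(d\eta)(\gamma)=\psi(\chi\alpha,\gamma)=0$ for every $\gamma$, which completes the verification.
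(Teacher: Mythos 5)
Your proposal is correct and follows essentially the same route as the paper: the extension via a cutoff $\chi\equiv 1$ near $\supp(\gamma)$ (well-defined by the diagonal property of $\psi$ from Proposition~\ref{Lemma:ClosedImpliesLocal} and Lemma~\ref{lem:sheaf}), the cocycle identity (i) by replacing $[\chi\alpha,\chi\beta]$ with $[\alpha,\beta]$ near $\supp(\gamma)$, and (ii) via the decomposition $\chi\,d\eta=d(\chi\eta)-d\chi\wedge\eta$. The only cosmetic difference is that you phrase the key step of (i) geometrically (the bracket only depends on local data near $\supp(\gamma)$) where the paper notes that $f_K[\alpha,\beta]-[f_K\alpha,f_K\beta]$ vanishes on $K$; these are the same observation.
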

\begin{proof}
The extension exists by \ref{lem:sheaf} in Appendix B and is unique because the compactly supported forms are dense in all forms. We recall how the extension is constructed: Given $\alpha \in  \Omega^{n-2}(M)$ and compact $K \subseteq M$, any $f_{K} \in C^{\infty}_{c}(M)$ with $f_K|_{K} = 1$ yields the same continuous linear functional 
$\widehat{\psi}(f_{K}\alpha) \in \Omega^{n-2}_{K}(M)'$, where $\Omega^{n-2}_{K}(M)$ denotes the subspace of forms supported in $K$. 
This defines an element  in the continuous linear dual of the injective limit $\varinjlim \Omega^{n-2}_{K}(M)=\Omega^{n-2}_{c}(M)$,
set to be the image of $\alpha$ by \eqref{doi}. We can now verify the properties of  $\widehat\psi$:

\begin{itemize}
\item [(i)] For $\alpha,\beta\in\Omega^{n-2}(M)$, $\gamma\in\Omega_K^{n-2}(M)$ and function $f_K$ as above, we get:
\begin{align*}
\widehat\psi([\alpha,\beta])\gamma&=\psi(f_K[\alpha,\beta],\gamma)=\psi([f_K\alpha,f_K\beta],\gamma)=\psi(f_K\alpha,[f_K\beta,\gamma])-\psi(f_K\beta,[f_K\alpha,\gamma])\\
&=\psi(f_K\alpha,[\beta,\gamma])-\psi(f_K\beta,[\alpha,\gamma])
=\widehat\psi(\alpha)[\beta,\gamma]-\widehat\psi(\beta)[\alpha,\gamma]=(\alpha\cdot\widehat\psi(\beta)-\beta\cdot\widehat\psi(\alpha))\gamma.
\end{align*}
The computation uses the fact that $\psi$ is diagonal at step two, since $f_K[\alpha,\beta]-[f_K\alpha,f_K\beta]$ vanishes on $K$,
thus its support is disjoint from $\supp(\gamma)\subseteq K$.
\item[(ii)] We show that $\widehat\psi(d\beta)=0$ for $\beta\in\Omega^{n-3}(M)$. Let $K$ be the support of $\gamma\in\Omega_c^{n-3}(M)$. 
By construction $\widehat\psi(d\beta)(\gamma)=\widehat\psi(f_Kd\beta)(\gamma)$.
Now with the identity $f_Kd\beta=d(f_K\beta)-(df_K)\wedge \beta$ we obtain that
$\widehat\psi(d\beta)(\gamma)=\widehat\psi(d(f_K\beta))(\gamma)- \widehat\psi(df_K\wedge \beta))(\gamma)=0$,
where the first term vanishes because $\widehat\psi$ vanishes on $d\Omega_c^{n-3}(M)$, while the second vanishes because $df_K$ is zero on the support of $\gamma$ and $\widehat\psi$ is support-decreasing.
\end{itemize}
\end{proof}

\subsection{Local triviality}

This section is devoted to the proof of the local version of the fact that 
$\oline{\Omega}{}_c^{n-2}(M):={\Omega_c^{n-2}(M)}/{d\Omega_{c}^{n-3}(M)}$ is centrally closed.
Thus we consider  the case where $M$ is a contractible open subset $U\subseteq \R^n$ and 
$\mu$ is the canonical volume form. In this case the following Lie algebras coincide,
\begin{equation}\label{three}
\oline{\Omega}{}_c^{n-2}(U)=\ceX(U,\mu)=\fX_c(U,\mu),
\end{equation}
and we prove that their second continuous cohomology group is zero.
In the next section we will use this to prove the global case, namely that $\oline{\Omega}{}^{n-2}_{c}(M)$ is centrally closed for arbitrary manifolds $M$.

We denote by $\fX_{\leq k}(U, \mu)$ the vector space of divergence-free vector fields with polynomial coefficients of degree at most $k$, 
and by $\fX_{ k}(U, \mu)$ the subspace with homogeneous ones of degree $k$. 
In particular 
\begin{equation}\label{patru}
[\fX_{ k}(U, \mu),\fX_{ l}(U, \mu)]\subseteq \fX_{ k+l-1}(U, \mu).
\end{equation}
Low degree cases are $\fX_{ 0}(U, \mu)=\R^n$ and $\fX_{ 1}(U, \mu)=\mathfrak{sl}(n,\R)$.
Notice that, under the above identification, the  $\mathfrak{sl}(n,\R)$-representation on $\fX_{k}(U, \mu) \subset \fX_{k}(U)=S^{k}(\R^{n})^*\otimes \R^n$ 
given by the Lie bracket in \eqref{patru} coincides with the natural action of $\mathfrak{sl}(n,\R)$ on this tensor product.

\begin{Proposition}\label{prop:workhorse}
    Let $n\geq 3$, and let $U\subset \mathbb R^n$ be a contractible open subset, and let $\psi\in CL^{2}(\Omega^{n-2}_c(U),\R)$ be a continuous skew-symmetric 
    Leibniz 2-cocycle. Then $\psi$ is a coboundary, i.e. $\psi=\dd\eta$ for some $\eta\in CL^{1}(\Omega^{n-2}_c(U),\R)=\Omega^{n-2}_c(U)'$.
\end{Proposition}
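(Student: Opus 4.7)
The strategy is to apply the perfectness trick of Proposition~\ref{Lemma:ClosedImpliesLocal} to pass from the skew Leibniz 2-cocycle $\psi$ to the associated 1-cocycle $\widehat\psi$, view the latter as a differential operator, and then invoke the Poincar\'e lemma for differential operators from Appendix~\ref{Appendix:PoincareLemma} on the contractible set $U$ to construct a primitive. By Lemma~\ref{lem:2coc1coc} it will suffice to find a $\widehat\eta \in \Omega^{n-2}_c(U)'$ such that $\widehat\psi(\alpha) = -\alpha\cdot \widehat\eta$, since this forces $\psi = \dd\eta$.

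First I would package the data. By Proposition~\ref{Lemma:ClosedImpliesLocal}, the 1-cocycle $\widehat\psi \colon \Omega^{n-2}_c(U) \to \Omega^{n-2}_c(U)'$ is a distribution-valued differential operator of locally finite order; Lemma~\ref{lemma:extendpsi} extends it to a continuous Leibniz 1-cocycle $\widehat\psi \colon \Omega^{n-2}(U) \to \Omega^{n-2}_c(U)'$ that annihilates $d\Omega^{n-3}_c(U)$. The cocycle identity combined with perfectness (Theorem~\ref{Thm:perfect}) and the skew-symmetry of $\psi$ additionally forces $\widehat\psi$ to vanish on the left centre $d\Omega^{n-3}(U)$ on both sides: for $\xi \in \Omega^{n-3}(U)$ one has $[d\xi,\cdot]=0$, hence for every commutator $[\alpha_1,\alpha_2]$ the cocycle relation yields $\widehat\psi(d\xi)([\alpha_1,\alpha_2])=0$, and perfectness closes the argument on one side while skewness handles the other. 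Using \eqref{three}, $\widehat\psi$ therefore descends to a differential operator cocycle on $\fX_c(U,\mu)$.

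Second I would build the primitive. A natural route is to exploit the filtration by polynomial divergence-free vector fields $\fX_{\leq k}(U,\mu)$ and the $\mathfrak{sl}(n,\R)=\fX_1(U,\mu)$-action from \eqref{patru}. Because $\widehat\psi$ is a diff op of finite order on compacts, its restriction to polynomials is determined by finite-dimensional data; $\mathfrak{sl}(n,\R)$-equivariance and Schur's lemma should then produce a polynomial candidate $\widehat\eta^{(k)}$ modulo unavoidable ambiguities. Applying the Poincar\'e lemma for differential operators on the contractible $U$ promotes this polynomial primitive to a smooth diff-op primitive on all of $\fX_c(U,\mu)$, the key input being the acyclicity of the diff-op-valued Chevalley-Eilenberg complex in the relevant degree over a contractible base.

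The main obstacle I expect is the last step: one has to set up the Poincar\'e lemma for differential operators in exactly the shape needed for the Leibniz cocycle equation
\[
\widehat\psi([\alpha,\beta]) = \alpha\cdot\widehat\psi(\beta) - \beta\cdot\widehat\psi(\alpha),
\]
and to guarantee that the resulting $\widehat\eta$ is simultaneously continuous, vanishes on $d\Omega^{n-3}_c(U)$, and arises from an honest $\eta \in CL^1(\Omega^{n-2}_c(U),\R)$. The bookkeeping of support, symmetry, and continuity conditions throughout the reductions (compactly supported vs.\ full forms, diagonality, skew-symmetry, descent to the quotient) is where I expect the real work to lie.
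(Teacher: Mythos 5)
Your plan follows the same skeleton as the paper's proof (perfectness trick, extension of $\widehat\psi$ via Lemma~\ref{lemma:extendpsi}, reduction to polynomial divergence-free vector fields, representation theory of $\mathfrak{sl}(n,\R)$), but two essential steps are missing or misplaced. The critical one is the assertion that $\widehat\psi$ ``descends to a differential operator cocycle on $\fX_c(U,\mu)$''. Vanishing on $d\Omega^{n-3}(U)$ does let the map factor through $\fX(U,\mu)$ as a linear map, but $\fX(U,\mu)$ is not a projective $C^{\infty}(U)$-module, so Peetre's theorem gives you no locally finite expansion for a cocycle defined only on divergence-free fields --- and without an expansion of the form \eqref{trei} the polynomial induction and the concluding step ``a differential operator is determined by its values on polynomials'' have nothing to act on. The paper resolves this by applying the Poincar\'e lemma for differential operators (Lemma~\ref{Lemma:exactDO}) \emph{at this stage}, to the relation $\widehat\psi\circ d=0$, obtaining a factorization $\widehat\psi=Q\circ d$ with $Q\colon\Omega^{n-1}(U)\to\Omega^{n-2}_c(U)'$ a differential operator on the free module $\Omega^{n-1}(U)\cong\fX(U)$; only then is $D=Q\circ\mu^\flat$ a genuine differential operator on \emph{all} vector fields. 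In your proposal that lemma appears only at the end, to ``promote a polynomial primitive to a smooth one'', which is not what it does and leaves the descent unjustified.

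The second gap is the induction itself: it is not ``equivariance plus Schur's lemma'' throughout. In degree $0$ the cocycle condition $\partial_i\cdot T^{\vec 0}_j=\partial_j\cdot T^{\vec 0}_i$ is solved by the Poincar\'e lemma for \emph{currents}; in degree $1$ one needs Whitehead's first lemma $H^1(\mathfrak{sl}(n,\R),\wedge^{n-2}\R^n)=0$ --- in both cases one solves a cocycle equation, not an intertwining condition. Only for $k\ge 2$ does the identity reduce to $\mathfrak{sl}(n,\R)$-equivariance, and there the needed input is that $\wedge^{n-2}\R^n$ is not a summand of $S^{k}(\R^n)^*\otimes\R^n$. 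Finally, your argument that $\widehat\psi$ kills $d\Omega^{n-3}(U)$ (``perfectness on one side, skewness on the other'') is circular as written: applying the cocycle identity to $\widehat\psi(d\xi)([\alpha,\beta])$ reintroduces $\widehat\psi$ evaluated on another exact form. The clean route is Theorem~\ref{Thm:squares}: the ideal of squares equals $d\Omega^{n-3}_c(U)$, $\widehat\psi$ annihilates squares by the cocycle identity and skew-symmetry, and the extension to $d\Omega^{n-3}(U)$ is part (ii) of Lemma~\ref{lemma:extendpsi}.
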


\begin{proof}
Let $\psi$ be the skew-symmetric continuous Leibniz 2-cocycle on $\Omega_c^{n-2}(U)$. The idea is to extend it to a differential operator 
$D \colon \fX(U) \rightarrow \Omega^{n-2}_{c}(U)'$
and use the vector spaces $\fX_{\leq k}(U, \mu)$ for an inductive proof that all the cocycles are coboundaries.

We extend $\psi$ to a Leibniz 1-cocycle $\widehat\psi$ on $\Omega^{n-2}(U)$ with values in $\Omega^{n-2}_{c}(U)'$, as in Lemma \ref{lemma:extendpsi}. Since $\psi$ is skew-symmetric, it vanishes on the ideal of squares $d\Omega^{n-3}_{c}(U) \subseteq \Omega^{n-2}_{c}(U)$. 
Since $\psi$ is continuous, $\widehat{\psi}$ vanishes on $d\Omega^{n-3}(U) \subseteq \Omega^{n-2}(U)$, and $\widehat\psi\circ d=0$.
Since $n\geq 3$, this allows to apply  the Poincar\'e lemma for differential operators, Lemma~\ref{Lemma:exactDO} in Appendix~\ref{Appendix:PoincareLemma},
to obtain that $\widehat\psi$ is of the form $Q \circ d$ for a differential operator 
$Q \colon \Omega^{n-1}(U) \rightarrow \Omega^{n-2}_{c}(U)'$.

The identification $\mu^{\flat} \colon \fX(U) \xrightarrow{\sim} \Omega^{n-1}(U)$ by 
the volume form is $C^{\infty}(U)$-linear, so $Q$ yields the differential operator $D:=Q \circ \mu^{\flat}$. 
By Peetre's Theorem (\ref{thm:peetre} in Appendix~\ref{Appendix:PoincareLemma}),  $D\colon \fX(U) \rightarrow \Omega^{n-2}_{c}(U)'$ admits a locally finite expansion
\begin{equation}\label{trei}
D\Big(\sum_{i=1}^nX^i\partial _i\Big) = \sum_{i=1}^n\sum_{\vec{\sigma} \in \mathbb{N}^{n}}
(\partial_{\vec\sigma}X^i)T^{\vec{\sigma}}_{i}
\end{equation}
in terms of $T^{\vec{\sigma}}_{i} \in \Omega^{n-2}_{c}(U)'$. 

If $X \in \fX(U, \mu)$ is divergence-free, then there exists a potential $\alpha_{X} \in \Omega^{n-2}(U)$ such that $i_{X}\mu = d\alpha_{X}$ 
(because $H^{n-1}_{\mathrm{dR}}(U) = 0$).
Since $D(X) = Q(i_{X}\mu) = Q(d\alpha_{X}) = \widehat{\psi}(\alpha)$, the restriction of $D$ to the divergence-free vector fields is a Lie algebra 1-cocycle
for $\fX(U, \mu)$ with values in $\Omega_c^{n-2}(U)'$, that is, 
\begin{equation}\label{eq:OneC}
D([X,Y]) = X \cdot D(Y) - Y \cdot D(X),\quad \forall X,Y\in  \fX(U, \mu)
\end{equation}
for the action $(X\cdot T)(\beta)=-T(L_X \beta)$ of $\fX(U,\mu)$ on $\Omega_c^{n-2}(U)'$.

Now we are ready to prove inductively that the 1-cocycle $D: \fX(U, \mu) \to\Omega_c^{n-2}(U)'$ is cohomologous to a 1-cocycle that vanishes on $\fX_{\leq k}(U,\mu)$. 

\paragraph{Step 0}
We find $\eta\in\Omega^{n-2}_{c}(U)'$ such that  the 1-cocycle $D-\dd \eta$ vanishes on $\fX_{0}(U, \mu)$, i.e.~on constant vector fields.

We use the language of currents, so let us denote by $\cD'^{q}(U) := \Omega^{n-q}_{c}(U)'$ the space of currents of degree $q$.
Then the exterior derivative  $d:\cD'^{q}(U) \to \cD'^{q+1}(U) $ obeys a Poincar\'e Lemma for currents \cite[Section I.2]{Demailly07}.
We define $T:= T^{\vec{0}}_{i}\otimes dx^{i}$, an $(\R^n)^*$-valued current of degree~2,
which we consider as a $\wedge^{n-2}\R^n$-valued current of degree~1 by means of the 
isomorphism 
\[
\cD'{^2}(U)\otimes (\R^n)^*
\simeq
\wedge^{n-2}\R^n \otimes \cD'{}^{0}(U) \otimes (\R^n)^*
\simeq 
\wedge^{n-2}\R^n \otimes \cD'{}^{1}(U).
\]
The cocycle identity \eqref{eq:OneC}
for constant vector fields  $X = \partial_{i}$ and $Y = \partial_{j}$ in $ \fX_{0}(U, \mu)$ yields 
$\partial_{i} \cdot T^{\vec{0}}_{j} - \partial_{j}\cdot T^{\vec{0}}_{i} = 0$.
This translates to $d T= 0$, because the action of $\partial_{i}$ on $T^{\vec{0}}_{j} \in\Omega^{n-2}_{c}(U)'\simeq \cD'{}^{0}(U) \otimes \wedge^{n-2}\R^n$ corresponds to the  ordinary derivative of distributions.
By the Poincar\'e Lemma for currents, there exists a $\wedge^{n-2}\R^n$-valued $0$-current
$\eta \in \cD'{}^{0}(U) \otimes \wedge^{n-2}\R^n \simeq \Omega^{n-2}_{c}(U)'$ with 
$T= d\eta$. This means that $T^{\vec{0}}_{i} = \partial_{i}\eta$ for all coordinate directions $i$,
so by the expansion \eqref{trei} the 1-cocycle $D - \dd \eta$ vanishes on $\fX_{0}(U, \mu)$.

\paragraph{Step 1}
Suppose that $D$ vanishes on $\fX_{0}(U,\mu)$. We find $\eta\in\Omega^{n-2}_{c}(U)'$ such that  the 1-cocycle $D-\dd \eta$ vanishes 
on $\fX_{\leq 1}(U, \mu)$.

For $X \in \fX_{0}(U,\mu)$ and 
$Y \in \fX_{1}(U, \mu)$ we have $[X,Y] \in \fX_{0}(U,\mu)$, so
the cocycle identity~\eqref{eq:OneC} yields $X \cdot D(Y) = 0$. It follows that 
$D(Y)\in\Omega^{n-2}_{c}(U)'$ is a constant current of degree~2, for all $Y \in \fX_{1}(U, \mu)$. 
The subspace of constant currents $\wedge^{n-2}\R^n \subset \Omega^{n-2}_{c}(U)'$
is a subrepresentation for the Lie subalgebra $\fX_{1}(U, \mu)$ of linear divergence-free vector fields.
If we identify $\fX_{1}(U, \mu)$ with $\mathfrak{sl}(n,\R)$, then this subrepresentation
$\wedge^{n-2}\R^n$ is the $(n-2)$-fold wedge product of the defining representation of $\mathfrak{sl}(n,\R)$, as one would expect. 

The restriction of $D$ to  linear divergence-free vector fields is a 1-cocycle on 
$\mathfrak{sl}(n,\R)$ with values in the finite dimensional representation $\wedge^{n-2}\R^n$, and hence a coboundary by Whitehead's Lemma.
Thus there exists $\eta \in \wedge^{n-2}\R^n \subseteq \Omega^{n-2}_{c}(U)'$ with  $D(X) = X \cdot \eta$ for all $X\in\fX_{1}(U, \mu)$. 
Moreover, since $\eta$ is a constant current, $X\cdot \eta = 0$ for $X \in \fX_{0}(U, \mu)$.
We obtain that the 1-cocycle $D-\dd \eta$ vanishes on $\fX_{\leq 1}(U, \mu)$.

\paragraph{Step k} Suppose that $D$ vanishes on $\fX_{\leq k-1}(U)$ for $k\ge 2$.
Then it also vanishes on  $\fX_{\leq k}(U, \mu)$.

We have $[\fX_{ 0}(U, \mu),\fX_{ k}(U, \mu)]\subseteq \fX_{ k-1}(U, \mu)$ by \eqref{patru},
so the cocycle identity for $D$ yields 
$X\cdot D(Y) = 0$ for all $X\in \fX_{0}(U,\mu)$ and $Y \in \fX_{k}(U,\mu)$. It follows that the restriction of $D$ to $\fX_{k}(U,\mu)$
takes values in the subspace of constant currents $\wedge^{n-2}\R^{n} \subseteq \Omega^{n-2}_{c}(U)'$.
The cocycle identity applied to $X \in \fX_{1}(U, \mu)$ and $Y \in \fX_{k}(U, \mu)$
then reads $D([X,Y]) = X \cdot D(Y)$, 
so the restriction of $D$ to 
$\fX_{k}(U, \mu) \subset \fX_{k}(U)=S^{k}(\R^{n})^*\otimes \R^n$ with values in $\wedge^{n-2}\R^{n} $ is an intertwiner of 
$\mathfrak{sl}(n,\R)$-representations.
Since $S^{k}(\R^{n})^*\otimes \R^n$ decomposes as a direct sum of two irreducible representation 
\cite[Prop~15.25]{FultonHarris04}, 
the $\mathfrak{sl}(n,\R)$-subrepresentation $\fX_{k}(U,\mu) \subseteq S^{k}(\R^{n})^*\otimes \R^n$ must be irreducible.
Since $\wedge^{n-2}\R^n$ is irreducible as well,
but not an irreducible subrepresentation of $S^{k}(\R^{n})^*\otimes \R^n$ \cite[Prop~15.25]{FultonHarris04},
the intertwiner  $D \colon \fX_{k}(U,\mu) \rightarrow \wedge^{n-2}\R^{n}$ is zero, and $D$ vanishes on $\fX_{\leq k}(U, \mu)$. Since a differential operator is completely determined by its values on polynomials, it follows that the 1-cocycle $D \colon \fX(U,\mu) \rightarrow \fX_{c}(U,\mu)'$ is a coboundary, $D=\dd\eta$ with $\eta \in  \Omega^{n-2}_{c}(U)'$. So $\widehat{\psi} \colon \Omega^{n-2}(U) \rightarrow \Omega^{n-2}_{c}(U)'$ is a coboundary as well, 
and so is $\psi \colon \Omega_{c}^{n-2}(U) \times \Omega_{c}^{n-2}(U) \rightarrow \R$.
\end{proof}

The above immediately implies:

\begin{Theorem}
\label{thm:local}
The Lie algebra of compactly supported divergence-free vector fields on $\R^n$ endowed with canonical volume form is centrally closed, i.e.~$H^2(\fX_c(\R^n,\mu),\R)=0$ .
\end{Theorem}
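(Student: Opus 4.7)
The plan is to combine Proposition~\ref{prop:workhorse} with Proposition~\ref{prop:inj}, after identifying $\fX_c(\R^n,\mu)$ with the Lie-algebra quotient of the Leibniz algebra $\Omega^{n-2}_c(\R^n)$.

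First I would record the vanishings of compactly supported de Rham cohomology on $\R^n$: since $H^{k}_{c,\dR}(\R^n)=0$ for $k\neq n$, we have $H^{n-1}_{c,\dR}(\R^n)=0$ (for $n\geq 2$) and $H^{n-2}_{c,\dR}(\R^n)=0$ (for $n\geq 3$). The first vanishing says that every compactly supported divergence-free vector field admits a compactly supported potential, so $\ceX(\R^n,\mu)=\fX_c(\R^n,\mu)$. The second vanishing turns the central extension \eqref{eq:CptSupportSequence} into an isomorphism
\[
\oline{\Omega}{}^{n-2}_c(\R^n)\xrightarrow{\sim}\fX_c(\R^n,\mu).
\]
Next, setting $\fL:=\Omega^{n-2}_c(\R^n)$, Theorem~\ref{Thm:squares} identifies the ideal of squares $\fL^{\sq}$ with $d\Omega^{n-3}_c(\R^n)$, which is closed in $\fL$ by Lemma~\ref{exareclosed}. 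Hence the associated Hausdorff Lie-algebra quotient is $\fL_{\Lie}=\oline{\Omega}{}^{n-2}_c(\R^n)\cong\fX_c(\R^n,\mu)$.

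Given a continuous Lie 2-cocycle $\phi\in C^2(\fX_c(\R^n,\mu),\R)$, I would pull it back along the projection $\pi\colon\fL\to\fL_{\Lie}$ to obtain a continuous skew-symmetric Leibniz 2-cocycle $\pi^*\phi\in CL^2(\fL,\R)$. Applying Proposition~\ref{prop:workhorse} to $U=\R^n$ gives that $\pi^*\phi$ is a Leibniz coboundary, so $[\pi^*\phi]=0$ in $HL^2(\fL,\R)$. By the injectivity of $\pi^*\colon H^2(\fL_{\Lie},\R)\to HL^2(\fL,\R)$ established in Proposition~\ref{prop:inj}, this forces $[\phi]=0$ in $H^2(\fX_c(\R^n,\mu),\R)$.

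I do not expect a substantive obstacle here, since all the real work is absorbed by the two propositions quoted. The only point that requires care is bookkeeping the chain of identifications $\fX_c(\R^n,\mu)=\ceX(\R^n,\mu)\cong\oline{\Omega}{}^{n-2}_c(\R^n)=\fL_{\Lie}$ on contractible $\R^n$, which rests entirely on the vanishing of $H^{n-1}_{c,\dR}$ and $H^{n-2}_{c,\dR}$, together with closedness of the ideal of squares.
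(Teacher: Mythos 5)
Your proposal is correct and follows essentially the same route as the paper: both pull the Lie cocycle back to a skew-symmetric Leibniz cocycle on $\Omega^{n-2}_c(\R^n)$, invoke Proposition~\ref{prop:workhorse} to produce a Leibniz primitive, and then descend to $\fX_c(\R^n,\mu)$ via the identification $\fX_c(\R^n,\mu)\cong\oline{\Omega}{}^{n-2}_c(\R^n)$ coming from the vanishing of $H^{n-1}_{c,\dR}(\R^n)$ and $H^{n-2}_{c,\dR}(\R^n)$. The only cosmetic difference is that you cite Proposition~\ref{prop:inj} for the descent, whereas the paper unwinds that argument explicitly by checking that the primitive $\eta$ kills the ideal of squares $d\Omega^{n-3}_c(\R^n)$.
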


\begin{proof}
We have to show that $H^2(\fX_c(\R^n,\mu),\R)=0$. Let $\phi$ be a continuous Lie algebra 2-cocycle on $\fX_{c}(\R^n,\mu)$. We can pull back $\phi$ to a skew-symmetric cocycle $\psi$ on $\Omega^{n-2}_c(\R^n)$ and apply Proposition \ref{prop:workhorse} to obtain a potential $\eta$.

Using the skew-symmetry of $\phi$ we get for all $\alpha\in\Omega_c^{n-2}(\mathbb R^n)$:
\begin{align}\label{eq:vanonex}
  0=\phi(X_\alpha,X_\alpha)
  =\psi(\alpha,\alpha)
  =-\eta([\alpha,\alpha]),  
\end{align}

thus $\eta$ vanishes on the ideal of squares of the Leibniz algebra $\Omega_c^{n-2}(\mathbb R^n)$, which is  $d\Omega^{n-3}_{c}(\R^n)$ by Theorem \ref{Thm:squares}.
This means that $\eta$ arises from an $\bar\eta\in\fX_c(\R^n,\mu)'$ and $\widehat\phi \colon \fX_{c}(\R^n, \mu) \rightarrow  \fX_{c}(\R^n, \mu)'$ is its coboundary: $\widehat\phi=\dd\bar\eta$.
In particular the 2-cocycle $\phi$ is a coboundary as well, and $H^2(\fX_c(\R^n,\mu),\R)=0$.
The result holds for $n=2$ as well, see \cite{JV16}.
\end{proof}

\subsection{Global triviality}\label{subs:globtriv}

In this section we will use the local triviality established in Proposition \ref{prop:workhorse} to show that $\oline{\Omega}{}_c^{n-2}(M)$ and $\oline{\Omega}{}^{n-2}(M)$ have trivial second cohomology. We start with the compactly supported case:

\begin{Theorem} \label{prop:mainc}Let $(M,\mu)$ be a smooth manifold of dimension $\geq 3$, equipped with a volume form. Then 
$\oline{\Omega}{}_c^{n-2}(M)$ is centrally closed, i.e. $H^2(\oline{\Omega}{}_c^{n-2}(M),\R)=0$.
\end{Theorem}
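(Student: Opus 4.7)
The plan is to reduce the Lie cohomology statement to a Leibniz cohomology question via Proposition~\ref{prop:inj}, and then combine the local triviality of Proposition~\ref{prop:workhorse} with a sheaf-theoretic gluing argument powered by the perfectness result of Theorem~\ref{Thm:perfect}. Concretely, I start with a continuous 2-cocycle $\phi$ on $\oline{\Omega}{}_c^{n-2}(M)$ and pull it back along the projection $\pi$ to a skew-symmetric continuous Leibniz 2-cocycle $\psi := \pi^*\phi$ on $\Omega^{n-2}_c(M)$. By Proposition~\ref{prop:inj} it is enough to exhibit a continuous $\eta \in \Omega^{n-2}_c(M)'$ with $\dd \eta = \psi$, and Proposition~\ref{Lemma:ClosedImpliesLocal} provides the first structural input: $\psi$ is diagonal and $\widehat{\psi}$ is a support-decreasing distribution-valued differential operator of locally finite order.

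For the local primitives I would mimic the proof of Theorem~\ref{Thm:perfect}: cover $M$ by contractible charts on which $\mu$ is canonical, and apply Ostrand's theorem to refine this into a cover by $n+1$ open sets $W_i = \bigsqcup_k V_{i,k}$, with each $V_{i,k}$ a contractible open in a chart. On each $V_{i,k}$ the restriction of $\psi$ is a skew-symmetric continuous Leibniz 2-cocycle on $\Omega^{n-2}_c(V_{i,k})$, so Proposition~\ref{prop:workhorse} supplies $\eta_{i,k} \in \Omega^{n-2}_c(V_{i,k})'$ with $\dd \eta_{i,k} = \psi|_{V_{i,k}}$. Since the $V_{i,k}$ are pairwise disjoint for fixed $i$, these assemble into $\eta_i \in \Omega^{n-2}_c(W_i)'$ with $\dd \eta_i = \psi|_{W_i}$.

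The gluing is the step that requires the most care. On each overlap $W_i \cap W_j$ the difference $\eta_i - \eta_j$ is a continuous Leibniz 1-cochain with vanishing coboundary, hence it annihilates every commutator in $\Omega^{n-2}_c(W_i \cap W_j)$. By Theorem~\ref{Thm:perfect} this Leibniz algebra is perfect, so $\eta_i$ and $\eta_j$ agree on $W_i \cap W_j$. Because continuous linear functionals on compactly supported differential forms form a sheaf, the $\eta_i$ then assemble to a global $\eta \in \Omega^{n-2}_c(M)'$.

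To check $\dd \eta = \psi$ globally, I would use a partition of unity subordinate to $\{W_i\}$ to reduce to the case where $\alpha$ has support in a single $V_{i,k}$; then both $\widehat{\psi}(\alpha)$ and $\eta \circ [\alpha,\,\cdot\,]$ are currents supported in $\supp(\alpha) \subset V_{i,k}$ (the former by diagonality of $\psi$, the latter because $[\alpha, \beta] = L_{X_\alpha}\beta$ has support contained in $\supp(\alpha)$), and they coincide on $V_{i,k}$ by the local identity $\dd\eta_{i,k} = \psi|_{V_{i,k}}$. Finally, skew-symmetry of $\psi$ forces $\eta([\alpha, \alpha]) = -\psi(\alpha, \alpha) = 0$, so $\eta$ vanishes on the ideal of squares $d\Omega^{n-3}_c(M)$ by Theorem~\ref{Thm:squares} and descends to a continuous primitive for $\phi$ on $\oline{\Omega}{}_c^{n-2}(M)$. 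The main obstacle I anticipate is precisely this global verification: one has to argue carefully that the combination of perfectness (enforcing uniqueness of primitives on overlaps) and the support-decreasing structure of $\widehat{\psi}$ (allowing local-to-global comparison even for test forms whose support meets several $W_i$) is strong enough to make the sheaf-glued $\eta$ genuinely satisfy $\dd\eta = \psi$.
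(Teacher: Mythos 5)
Your argument is essentially the paper's own proof: reduce to exactness of the Leibniz $1$-cocycle $\widehat\psi$ via Proposition~\ref{prop:inj} and Lemma~\ref{lem:2coc1coc}, produce local primitives from Proposition~\ref{prop:workhorse}, show they agree on overlaps by perfectness of $\Omega^{n-2}_c(U_i\cap U_j)$, glue by the sheaf property of distributions, and conclude $\dd\eta=\widehat\psi$ from the fact that $\dd\eta-\widehat\psi$ is support-decreasing and vanishes on each chart. The one place you deviate is the Ostrand refinement, and that is the only step I would push back on: Proposition~\ref{prop:workhorse} requires a \emph{contractible} open subset of $\R^n$ (it uses the Poincar\'e lemmas for currents and for differential operators), whereas Ostrand's theorem only delivers connected pieces $V_{i,k}$, so as written you cannot apply the local triviality result to them. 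The detour is also unnecessary: the disjointness of the $V_{i,k}$ was needed in Theorem~\ref{Thm:perfect} to assemble infinitely many commutator expressions, but here the sheaf property of $U\mapsto \Omega^{n-2}_c(U)'$ already glues local primitives over an \emph{arbitrary} cover by contractible charts, which is exactly what the paper does. Dropping Ostrand and working directly with a cover by contractible coordinate neighbourhoods removes the gap without changing anything else in your argument.
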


\begin{proof} Let $\phi\in C^2(\oline{\Omega}{}_c^{n-2}(M),\R)$ be a cocycle. 
For $\fL=\Omega_c^{n-2}(M)$, we have $\fL_{\Lie}=\oline{\Omega}{}_c^{n-2}(M)$
by Theorem \ref{Thm:squares}. Hence, Proposition \ref{prop:inj} implies that to get the exactness of $\phi$ it suffices to show that $\psi=\pi^*\phi\in CL^2(\Omega^{n-2}_c(M))$ is exact. By Lemma \ref{lem:2coc1coc}, it suffices to verify that $\widehat \psi\colon \Omega_c^{n-2}(M) \rightarrow \Omega^{n-2}_{c}(M)'$ is exact.
 
Let $\{U_i\}$ an open cover of $M$ by contractible coordinate neighborhoods.
The operators $\widehat \psi|_{U_i}\colon \Omega_c^{n-2}(U_i) \rightarrow \Omega^{n-2}_{c}(U_i)'$ vanish on exact forms. In other words they are skew-symmetric, i.e. Proposition \ref{prop:workhorse} is applicable to them. This means that  $\widehat\psi|_{U_i}$ are exact with potentials $\eta_i\in\Omega^{n-2}_c(U_i)'$.

For any $\alpha,\beta\in \Omega^{n-2}_c(U_i)$, we have  $\eta_i([\alpha,\beta])=\widehat\psi_{i}(\alpha)\beta$. We first observe that 
$\eta_i$ and $\eta_j$ agree on $\Omega_{c}^{n-2}(U_i \cap U_j)$:
Indeed, since $\Omega_{c}^{n-2}(U_i \cap U_j)$ is a perfect Leibniz algebra,
every $\alpha \in \Omega^{n-2}_{c}(U_i \cap U_j)$ can be written as 
$\alpha = \sum_{r=1}^{N}[\beta_r,\gamma_r]$ with 
$\beta_r, \gamma_r \in \Omega^{n-2}_{c}(U_i \cap U_j)$, yielding
\[
\eta_i(\alpha)=\sum_{r=1}^{N}\eta_i([\beta_r,\gamma_r])=
\sum_{r=1}^{N}\psi|_{U_i}(\beta_r)\gamma_r=
\sum_{r=1}^{N}\psi|_{U_i\cap U_j}(\beta_r)\gamma_r=
\sum_{r=1}^{N}\psi|_{U_j}(\beta_r)\gamma_r=\sum_{r=1}^{N}
\eta_{j}([\beta_r,\gamma_r])=\eta_j(\alpha).
\]
By \cite[Proposition A1]{janssensNiestijl2024}, the presheaf that assigns to every open set $U$ the distributions $\Omega^{n-2}_{c}(U)'$  
is a sheaf, hence the $\eta_i$ can be glued to an element $\eta\in \Omega^{n-2}_{c}(M)'$. We would like to show that $\dd\eta=\hat\psi$. To see this we can observe that $\dd\eta-\hat\psi$ is support decreasing. This already implies that $\dd\eta-\hat\psi$ is identically zero because by construction it vanishes on the subalgebras $\Omega^{n-2}_{c}(U_i) \subseteq \Omega^{n-2}_{c}(M)$ for all $i$. 

Finally, we observe that by the same calculation as in Equation \eqref{eq:vanonex}, $\eta$ vanishes on exact forms, i.e. it passes to the quotient as a primitive 
$\bar\eta\in \oline{\Omega}{}^{n-2}_c(M)'=C^1(\oline{\Omega}{}^{n-2}_c(M),\R)$ of $\phi$.
\end{proof}

Now we show that we can infer the statement without support conditions from the compactly supported one:

\begin{Theorem}\label{prop:main}
    Let $(M,\mu)$ be a smooth manifold of dimension $\geq 3$, equipped with a volume form. Then 
$\oline{\Omega}{}^{n-2}(M)$ is centrally closed, i.e. $H^2(\oline{\Omega}{}^{n-2}(M),\R)=0$.
\end{Theorem}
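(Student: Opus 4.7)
The plan is to deduce Theorem \ref{prop:main} from its compactly supported counterpart Theorem \ref{prop:mainc} by showing that the primitive provided by the latter can be realised as a compactly supported current, and then extended across all of $\Omega^{n-2}(M)$. Let $\phi \in C^{2}(\oline{\Omega}{}^{n-2}(M),\R)$ be a continuous cocycle, with pull-back $\psi=\pi^{*}\phi\in CL^{2}(\Omega^{n-2}(M),\R)$. First I would exploit Fr\'echet continuity to localize $\psi$: the seminorms generating the topology on $\Omega^{n-2}(M)$ are the $C^{k}$-norms on compacta $K\subseteq M$, so continuity yields a compact $K$, an order $k\in\N$ and a constant $C>0$ with
\[
|\psi(\alpha,\beta)|\leq C\|\alpha\|_{K,k}\|\beta\|_{K,k},
\]
forcing $\psi(\alpha,\beta)=0$ whenever $\alpha|_{K}=0$ or $\beta|_{K}=0$.

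Because the inclusion $\oline{\Omega}{}^{n-2}_{c}(M)\hookrightarrow \oline{\Omega}{}^{n-2}(M)$ is continuous, restricting $\phi$ gives a continuous 2-cocycle on $\oline{\Omega}{}^{n-2}_{c}(M)$, which by Theorem \ref{prop:mainc} is exact. Pulling back along $\pi$ then yields $\eta_{c}\in \Omega^{n-2}_{c}(M)'$, vanishing on $d\Omega^{n-3}_{c}(M)$, with $\eta_{c}([\alpha,\beta])=-\psi(\alpha,\beta)$ for all $\alpha,\beta\in \Omega^{n-2}_{c}(M)$. The crucial step is to prove that $\eta_{c}$ is supported in $K$. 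If $\gamma\in \Omega^{n-2}_{c}(M)$ has support in the open submanifold $V:=M\setminus K$, then Theorem \ref{Thm:perfect} applied to $V$ produces a decomposition $\gamma=\sum_{i=1}^{N}[\alpha_{i},\beta_{i}]$ with $\alpha_{i},\beta_{i}\in \Omega^{n-2}_{c}(V)$, and hence
\[
\eta_{c}(\gamma)=-\sum_{i=1}^{N}\psi(\alpha_{i},\beta_{i})=0,
\]
because each $\alpha_{i},\beta_{i}$ vanishes on $K$.

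I would then fix $\chi\in C^{\infty}_{c}(M)$ that is identically $1$ on an open neighbourhood $W$ of $K$, and set $\eta(\alpha):=\eta_{c}(\chi\alpha)$ for $\alpha\in\Omega^{n-2}(M)$. The map $\alpha\mapsto \chi\alpha$ is continuous from $\Omega^{n-2}(M)$ into the Fr\'echet subspace of forms supported in $\supp(\chi)$, so $\eta\in \Omega^{n-2}(M)'$, and independence of the chosen cutoff follows from the support property of $\eta_{c}$ established above. To verify $\dd\eta=\psi$, observe that on $W$ we have $d(\chi\alpha)=d\alpha$ and therefore $X_{\chi\alpha}=X_{\alpha}$, while $\chi\beta=\beta$; since the Lie derivative is local, $[\chi\alpha,\chi\beta]$ and $\chi[\alpha,\beta]$ coincide on $W$, and their difference is supported in $V$ and hence annihilated by $\eta_{c}$. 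For the vanishing of $\eta$ on $d\Omega^{n-3}(M)$, expand $\chi\, d\beta = d(\chi\beta)-d\chi\wedge\beta$: the first term is killed by $\eta_{c}$ since $\chi\beta\in\Omega^{n-3}_{c}(M)$, and the second is killed because $d\chi$ vanishes on $W$.

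Consequently $\eta$ descends to a continuous primitive $\bar\eta\in\oline{\Omega}{}^{n-2}(M)'$ of $\phi$, which proves $H^{2}(\oline{\Omega}{}^{n-2}(M),\R)=0$. The main obstacle is the compact-support statement for $\eta_{c}$; once this is in hand, the extension and the verifications of $\dd\eta=\psi$ and vanishing on exact forms are routine bookkeeping with cutoffs. The compact-support step itself hinges on combining the Fr\'echet continuity of $\phi$ (which localises $\psi$ on a fixed compact) with the local perfectness from Theorem \ref{Thm:perfect} (which localises the commutator decomposition of $\gamma$ away from that compact).
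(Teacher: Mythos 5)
Your proposal is correct and follows essentially the same route as the paper: restrict to the compactly supported subalgebra, invoke Theorem~\ref{prop:mainc} to get a primitive $\eta_c$, show via perfectness of $\Omega^{n-2}_c(M\setminus K)$ that $\eta_c$ is supported in a compact set $K$ outside of which $\psi$ vanishes, and extend by a cutoff. The only substantive difference is in how the localization to $K$ is obtained — you use the standard seminorm estimate for a jointly continuous bilinear map on a Fr\'echet space, whereas the paper argues by contradiction using diagonality of $\psi$ and an infinite sum of forms with pairwise disjoint supports; your version is arguably cleaner and more quantitative. One line should be added at the end of your verification of $\dd\eta=\psi$: after reducing to $-\eta_c([\chi\alpha,\chi\beta])=\psi(\chi\alpha,\chi\beta)$, you still need $\psi(\chi\alpha,\chi\beta)=\psi(\alpha,\beta)$, which follows immediately from your opening estimate because $(1-\chi)\alpha$ and $(1-\chi)\beta$ vanish on the neighbourhood $W$ of $K$ (note also that the estimate kills $\psi(\alpha,\beta)$ only when a form vanishes together with its derivatives up to order $k$ on $K$ — e.g.\ when it vanishes on a neighbourhood of $K$ — not merely when $\alpha|_K=0$ pointwise, which is in any case the only situation in which you use it).
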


\begin{proof}
As before, any continuous two-cocycle on $\oline{\Omega}{}^{n-2}(M)$ induces a one-cocycle $\widehat\psi:\Omega^{n-2}(M)\to \Omega^{n-2}(M)'$. We can restrict the latter to a cocycle $\widehat\psi_c:\Omega^{n-2}_c(M)\to \Omega^{n-2}_c(M)'$. By Theorem~\ref{prop:mainc} the latter is exact with a primitive $\eta\in \Omega^{n-2}_c(M)'$.

We claim that $\widehat \psi$ has compact support, i.e.\ that there exists a compact set $K$ such that $\hat\psi(\alpha)=0$ if ${\supp(\alpha)\cap K}=\emptyset$. If this were not the case, there would be a sequence of disjoint open sets $\{U_i\}_{i\in \mathbb N}$ and forms $\alpha_i,\beta_i\in\Omega^{n-2}_c(U_i)$ with $\psi(\alpha_i,\beta_i)=1$. Since $\psi \colon \oline{\Omega}{}^{n-2}(M) \times \oline{\Omega}{}^{n-2}(M) \rightarrow \R$ is diagonal and continuous, $\widehat\psi(\alpha)(\beta) = \psi(\alpha,\beta)$ would have infinite value on $\alpha=\sum \alpha_i$ and $\beta=\sum\beta_i$, which would form a contradiction.

The support of $\eta$ is then contained in $K$ as well. Indeed, any $x\notin K$ has an open neighbourhood $U$ such that 
$\psi(\oline{\Omega}{}^{n-2}_{c}(U), \oline{\Omega}{}^{n-2}_{c}(U)) = \{0\}$. Then 
\[
	\eta\big(\oline{\Omega}{}^{n-2}_{c}(U)\big) = \eta\big([\oline{\Omega}{}^{n-2}_{c}(U), \oline{\Omega}{}^{n-2}_{c}(U)]\big) =  
	\psi\big(\oline{\Omega}{}^{n-2}_{c}(U), \oline{\Omega}{}^{n-2}_{c}(U)\big) = \{0\},
\] 
so $x\notin \mathrm{supp}(\eta)$.
 Let $f$ be any compactly supported function with $f|_K=1$. We can define $\tilde\eta: \Omega^{n-2}(M)\to \R$ by $\tilde\eta(\alpha):=\eta(f\alpha)$. By construction, on compactly supported forms $\tilde \eta$ and $\eta$ coincide. This means that $\widehat \psi - \dd \tilde \eta$ vanishes on compactly supported forms. Since compactly supported forms  are dense in $\oline{\Omega}{}^{n-2}(M)$ and since $\widehat \psi - \dd \tilde \eta$ is continuous,
 this means that $\widehat \psi - \dd \tilde \eta$ is identically zero, i.e.\ $\widehat\psi$ is a coboundary.
\end{proof}

\section{Universal central extensions of Lie algebras}\label{sec:results}
In this section we apply the general theory developed in \cite{Neeb2002} together with the results from Section \ref{subs:globtriv}
 in order to construct universal central extensions for the Lie algebra $\mathfrak X_{\ex}(M,\mu)$ of exact divergence-free vector fields, 
 and for the Lie algebra $\ceX(M,\mu)$ of exact divergence-free vector fields that admit a compactly supported potential.
 
 Note that $\mathfrak X_{\ex}(M,\mu)$ fits into an exact sequence 
 \[
\fX_{\ex}(M,\mu) \hookrightarrow \fX(M,\mu) \twoheadrightarrow H^{n-1}_{\dR}(M)
 \]
 of locally convex Lie algebras, where $H^{n-1}_{\dR}(M)$ is abelian and the perfect Lie algebra $\fX_{\ex}(M,\mu)$ is the commutator ideal of $\fX(M,\mu)$ 
 (cf.~\cite{Lichnerowicz1974}).
In particular, $\mathfrak X(M,\mu)$ is perfect if and only if $H^{n-1}_{\dR}(M) = \{0\}$. In the algebraic setting, a Lie algebra has a universal central extension if and only if it is perfect \cite[\S1]{vdKallen1973}, so in general
one cannot expect $\mathfrak X(M,\mu)$ to have a universal central extension.
This explains our focus on the exact divergence-free vector fields.

In the same vein, the exact sequence  
 \[
	\ceX(M,\mu) \hookrightarrow \fX_{c}(M,\mu) \twoheadrightarrow H^{n-1}_{c, \dR}(M)
 \]
explains why in general one expects a universal central extension of $\ceX(M,\mu)$ and not of $\fX_{\ex}(M,\mu)$; 
by $ \cite{Lichnerowicz1974}$ the perfect Lie algebra $\ceX(M,\mu)$ is the commutator ideal of $\fX_{c}(M,\mu)$.

\subsection{Central extensions of locally convex Lie algebras}
Before we derive the main results, we briefly recall some properties of central extensions of topological Lie algebras from \cite{Neeb2002}.

Let $\mathfrak g$ be a topological Lie algebra. A central extension of $\mathfrak g$ is a continuous surjection $q':\mathfrak g'\to\mathfrak g$ of topological Lie algebras, such that $\mathfrak z':=ker(q')$ is central in $\mathfrak{g}'$.
In such cases we say that $\fg'$ is an extension of $\mathfrak g$ by $\mathfrak z'$.
A morphism of central extensions from $(q',\mathfrak g')$ to another central extension $(q'',\mathfrak g'')$ is a Lie algebra morphism $f:\mathfrak g'\to\mathfrak g''$ mapping $\mathfrak{z}'$ to $\mathfrak z''$ such that $q''f=q'$. 
Note that $f$ maps $\mathfrak{z}'$ into $\mathfrak{z}''$. 
If $\mathfrak{z}' = \mathfrak{z}''$, then a morphism is called strict if $f|_{\mathfrak{z}'} \colon \mathfrak{z}' \rightarrow \mathfrak{z}''$ is the identity.

A central extension $(q',\mathfrak g')$  is called linearly split, if there is a continuous linear section of $q'$. In this case, $\mathfrak g'\cong \mathfrak g\oplus \mathfrak z'$ as a topological vector space, and the bracket is uniquely characterized by a continuous two-cocycle of $\mathfrak g$ with values in 
the trivial $\fg$-module $\mathfrak z'$ (as in Section \ref{3.1}).
Cohomologous two-cocycles correspond exactly to isomorphic central extensions, hence we have a bijective correspondence:

\begin{quote}
linearly split central extensions of $\mathfrak g$ by $\mathfrak z'$ up to strict isomorphism ~~~~$\overset{1:1}{\Longleftrightarrow}$ ~~~~$H^2(\mathfrak g,\mathfrak z')$
\end{quote}

A central extension $\hat{\mathfrak z}\to \hat{\mathfrak g}\to \mathfrak g$ is called universal for a topological vector space $\mathfrak z'$, if for all linearly split central extensions $\mathfrak z'\to \mathfrak g'\to\mathfrak g$, there is a unique morphism of central extensions from $\hat{\mathfrak g}$ to $\mathfrak g'$. We underline here, that in principle a central extension could be universal for a certain class of central extensions (e.g. finite-dimensional $\mathfrak z'$) without being universal for another class of central extensions (e.g. locally convex ones). Let us try to describe universality in terms of cohomology:

\begin{Lemma}\label{lem:closedimpliesuniversal} Let $\hat q: \hat{\mathfrak g}\to \mathfrak g$ be a central extension of complete locally convex Lie algebras with:
	\begin{enumerate}
		\item $\hat{\mathfrak g}$ is perfect
		\item $H^2(\hat{\mathfrak g},\mathfrak z')=0$ where $\mathfrak z'$ is a locally convex vector space.
	\end{enumerate}
	Then $\hat{\mathfrak g}\to \mathfrak g$ is universal for central extensions by $\mathfrak z'$.
\end{Lemma}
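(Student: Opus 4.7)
The plan is to combine the pullback construction for central extensions with the vanishing of $H^2(\hat{\fg},\mathfrak{z}')$ to produce the required morphism, and then use perfectness of $\hat{\fg}$ to obtain uniqueness. Both halves are standard manoeuvres in Neeb's framework; the work lies in checking that the pullback genuinely yields a linearly split topological central extension in the complete locally convex category.

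For existence, given a linearly split central extension $\mathfrak{z}' \hookrightarrow \fg' \xrightarrow{q'} \fg$ with continuous linear section $\sigma\colon \fg\to\fg'$, I would form the pullback
\begin{equation*}
\tilde{\fg} := \hat{\fg} \times_{\fg} \fg' = \{(\hat{x}, x') \in \hat{\fg} \oplus \fg' \,:\, \hat{q}(\hat{x}) = q'(x')\}
\end{equation*}
as a closed topological Lie subalgebra of $\hat{\fg} \oplus \fg'$. The first projection $\pi_1\colon \tilde{\fg}\to \hat{\fg}$ is a central extension of $\hat{\fg}$ by $\mathfrak{z}'$, and it is linearly split via $\hat{x}\mapsto (\hat{x},\sigma(\hat{q}(\hat{x})))$. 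By the correspondence between strict isomorphism classes of linearly split central extensions and $H^2$ recalled just before the lemma, the vanishing of $H^2(\hat{\fg},\mathfrak{z}')$ implies that $\pi_1$ admits a continuous Lie algebra section $s\colon \hat{\fg}\to \tilde{\fg}$. Composing with the second projection $\pi_2\colon \tilde{\fg}\to \fg'$ gives a continuous Lie algebra morphism $f := \pi_2\circ s\colon \hat{\fg}\to\fg'$ satisfying $q'\circ f = \hat{q}$, which is the desired morphism of central extensions.

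For uniqueness, suppose $f_1, f_2\colon \hat{\fg}\to\fg'$ are two morphisms of central extensions. Their difference $\delta := f_1 - f_2$ is continuous and lands in $\mathfrak{z}' = \ker(q')$. Since $\mathfrak{z}'$ is central in $\fg'$, for all $x,y\in\hat{\fg}$,
\begin{equation*}
[f_1(x),f_1(y)] = [f_2(x)+\delta(x), f_2(y)+\delta(y)] = [f_2(x),f_2(y)],
\end{equation*}
so $\delta$ vanishes on $[\hat{\fg},\hat{\fg}]$. Perfectness of $\hat{\fg}$ (together with continuity of $\delta$, in case perfectness is only topological) then forces $\delta = 0$, hence $f_1 = f_2$.

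The main obstacle I anticipate is the topological book-keeping in the existence step: one must check that the pullback $\tilde{\fg}$ is a topological Lie algebra in the appropriate (complete locally convex) sense, that $\pi_1$ really is a linearly split topological central extension, and that the class it defines in $H^2(\hat{\fg},\mathfrak{z}')$ is precisely the one whose vanishing we assume — i.e.\ that the classification by jointly continuous cocycles applies to extensions by the possibly non-normable locally convex space $\mathfrak{z}'$. The algebraic content is short, but these topological verifications (carried out in full generality in \cite{Neeb2002}) are where completeness of $\hat{\fg}$ really enters.
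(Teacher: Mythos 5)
Your proposal is correct and follows essentially the same route as the paper: the fibred product $\hat{\mathfrak g}\times_{\mathfrak g}\mathfrak g'$ you construct is exactly the extension $\hat{\mathfrak g}\oplus_{\hat q^*\omega}\mathfrak z'$ obtained in the paper by pulling back the defining cocycle, the section exists for the same reason ($H^2(\hat{\mathfrak g},\mathfrak z')=0$), and the uniqueness argument via perfectness and centrality of the difference is identical.
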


\begin{proof} Let $\mathfrak z'\to \mathfrak g'\to \mathfrak g$ be a linearly split central extension. Let $\omega$ be the two-cocycle defining the bracket on $\mathfrak g'=\mathfrak g\oplus_{\omega} \mathfrak z'$. We can pull back the cocycle to obtain $\hat q^*\omega$ and correspondingly a central extension $\hat{\mathfrak g}\oplus_{\hat q^*\omega}\mathfrak z'$ of $\hat{\mathfrak g}$ by $\mathfrak z'$. Since $H^2_c(\hat{\mathfrak g},\mathfrak z')=0$, this extension is isomorphic to the trivial one and there exists a continuous Lie algebra morphism $f:\hat{\mathfrak g}\to \hat{\mathfrak g}\oplus_{\hat q^*\omega}\mathfrak z'$ that is the identity on the first component. We compose this morphism with the natural projection $(\hat q, \mathrm{id}):\hat{\mathfrak g}\oplus_{\hat q^*\omega}\mathfrak z'\to {\mathfrak g}\oplus_{\omega}\mathfrak z'$ to obtain a morphism of central extensions $F:\hat{\mathfrak{g}} \to {\mathfrak g}\oplus_{\omega}\mathfrak z'$. 
	
We claim this morphism is unique. So let us consider any morphism of central extensions $\tilde F:\hat{\mathfrak{g}} \to {\mathfrak g}\oplus_{\omega}\mathfrak z'$. Let $x\in\hat {\mathfrak g}$. Due to perfectness, $x=\sum[ y_i,z_i]$ for some finite collection $y_i,z_i$ of elements of $\hat{\mathfrak{g}}$. We have
$$
\tilde F(x)=\sum \tilde F([y_i,z_i])= \sum [\tilde F(y_i),\tilde F(z_i)]=\sum [F(y_i),F(z_i)]=F(x)
$$
Here we were allowed to replace $[\tilde F(y_i),\tilde F(z_i)]$ by $[F(y_i), F(z_i)]$, since the image of $F - \tilde{F}$ is central.
\end{proof}
\subsection{\texorpdfstring{Universal central extension of $\fX_{\ex}(M,\mu)$ and $\ceX(M,\mu)$}{Universal central extensions}}
Using these results from \cite{Neeb2002} combined with Theorems \ref{prop:mainc} and \ref{prop:main},
we readily obtain universal central extension of $\fX_{\ex}(M,\mu)$ and $\ceX(M,\mu)$.

\begin{Theorem}\label{Thm:UniversalNoncompactLA} Let $M$ be a smooth manifold of dimension at least $3$, and let $\mu$ be a volume form. Then the central extension 
	$$
	H^{n-2}_{\dR}(M)\to \oline{\Omega}{}^{n-2}(M)\to \mathfrak X_{\ex}(M,\mu)
	$$
	is 
	universal for linearly split extensions by complete locally convex spaces.
\end{Theorem}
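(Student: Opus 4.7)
The plan is to apply Lemma~\ref{lem:closedimpliesuniversal} to the central extension $\oline{\Omega}{}^{n-2}(M) \to \fX_{\ex}(M,\mu)$, reducing the theorem to two tasks, which I would carry out in order.

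\textbf{Step 1: Perfectness of $\oline{\Omega}{}^{n-2}(M)$.} This is a direct corollary of Theorem~\ref{Thm:perfect}. Since the Leibniz algebra $\Omega^{n-2}(M)$ is perfect, every element of it is a finite sum of brackets, and this property descends to the Lie quotient $\oline{\Omega}{}^{n-2}(M) = \Omega^{n-2}(M)/d\Omega^{n-3}(M)$ (alternatively one may combine Corollary~\ref{PerfectLA} with the abelianness of the kernel $H^{n-2}_{\dR}(M)$).

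\textbf{Step 2: Vanishing of $H^2(\oline{\Omega}{}^{n-2}(M), \z') = 0$ for every complete locally convex space $\z'$.} Theorem~\ref{prop:main} gives this in the scalar case $\z' = \R$, and the upgrade to $\z'$-coefficients is a formal consequence of Step~1. Given a continuous cocycle $\omega \colon \oline{\Omega}{}^{n-2}(M) \times \oline{\Omega}{}^{n-2}(M) \to \z'$, for each $\lambda$ in the continuous dual $(\z')'$ the scalar cocycle $\lambda\circ\omega$ is exact by Theorem~\ref{prop:main}, producing a continuous primitive $\eta_\lambda \in \oline{\Omega}{}^{n-2}(M)'$. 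By Step~1 any two primitives differ by a continuous linear functional vanishing on all commutators, and therefore vanishing identically, so $\eta_\lambda$ is unique and $\lambda \mapsto \eta_\lambda$ is linear. Using perfectness to write $x = \sum_i [y_i, z_i]$, set
\[
\eta(x) := -\sum_i \omega(y_i, z_i) \in \z'.
\]
The identity $\lambda(\eta(x)) = \eta_\lambda(x)$ for all $\lambda$, together with Hahn--Banach, shows that $\eta(x)$ is independent of the chosen decomposition, and $\dd\eta = \omega$ is then automatic.

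With both hypotheses of Lemma~\ref{lem:closedimpliesuniversal} in place, the universality of $\oline{\Omega}{}^{n-2}(M) \to \fX_{\ex}(M,\mu)$ follows.

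The main obstacle I anticipate is verifying that the assembled primitive $\eta \colon \oline{\Omega}{}^{n-2}(M) \to \z'$ is continuous in the locally convex topology of $\z'$, since no continuous choice of decomposition $x = \sum_i [y_i, z_i]$ is available. My approach would be to combine the weak continuity of $\eta$ (immediate from the continuity of each $\eta_\lambda$) with an equicontinuity argument on the family $\{\eta_\lambda\}$ indexed by equicontinuous subsets of $(\z')'$, and then invoke completeness of $\z'$ to upgrade weak continuity to continuity in the original topology. Alternatively, one can realise any complete locally convex $\z'$ as a closed subspace of a product of Banach spaces and reduce to the Banach-valued case component-wise, where the continuity upgrade is routine.
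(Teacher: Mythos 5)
Your proof follows the same route as the paper's: perfectness of $\oline{\Omega}{}^{n-2}(M)$ (from Theorem~\ref{Thm:perfect}) plus Lemma~\ref{lem:closedimpliesuniversal}, with the scalar vanishing $H^2(\oline{\Omega}{}^{n-2}(M),\R)=0$ supplied by Theorem~\ref{prop:main}. The one point of divergence is the upgrade from $\R$-coefficients to a general complete locally convex $\mathfrak z'$: the paper outsources this to \cite[Proposition 2.10]{Neeb2002}, invoking that $\oline{\Omega}{}^{n-2}(M)$ is Fr\'echet, whereas you carry it out by hand. Your construction is sound — well-definedness of $\eta(x)$ follows from Hahn--Banach separation exactly as you say, and uniqueness of each $\eta_\lambda$ from perfectness — and the continuity obstacle you flag is precisely where the Fr\'echet hypothesis does its work. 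Since $\oline{\Omega}{}^{n-2}(M)$ is the quotient of a Fr\'echet space by a closed subspace (Lemma~\ref{exareclosed}), it is Fr\'echet, hence barrelled: for any continuous seminorm $p$ on $\mathfrak z'$ the family $\{\eta_\lambda\}$, with $\lambda$ ranging over the polar of $\{p\le 1\}$, is pointwise bounded (as $\eta_\lambda(x)=\lambda(\eta(x))$) and therefore equicontinuous by Banach--Steinhaus, and writing $p(v)=\sup_\lambda|\lambda(v)|$ over that polar yields a dominating continuous seminorm for $p\circ\eta$. Equivalently, Fr\'echet spaces are Mackey spaces, so weak continuity of the linear map $\eta$ already implies continuity; your closed-graph reduction to Banach-valued components works too. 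So your Step~2 is a correct inline replacement for the citation, buying self-containedness at the cost of recording this one functional-analytic fact; note also that your argument only uses that $\mathfrak z'$ is Hausdorff locally convex, not completeness.
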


\begin{proof} Since $\oline{\Omega}{}^{n-2}(M)$ is perfect, by Lemma \ref{lem:closedimpliesuniversal} we only need to show that $H^2(\oline{\Omega}{}^{n-2}(M), \mathfrak z')=0$ for any complete locally convex space $\mathfrak z'$. Since $\oline{\Omega}{}^{n-2}(M)$ is Fréchet, this follows directly from \cite[Proposition 2.10]{Neeb2002} and the fact that  $H^2( \oline{\Omega}{}^{n-2}(M), \mathbb R)=0$ as established in Theorem \ref{prop:main}.
\end{proof}

Applying this to the central extension $\R \oplus_{\omega}\fX_{\ex}(M,\mu)$ associated to a continuous $2$-cocycle $\omega$ on 
$\fX_{\ex}(M,\mu)$ with trivial coefficients, we conclude that every such 2-cocycle
is of the form 
\begin{equation}
\omega_{\lambda}(X_{\alpha}, X_{\beta}) = \lambda({L_{X_{\alpha}}\beta})
\end{equation}
for a continuous linear functional $\lambda \in \Omega^{n-2}(M)'$ with $\lambda(d\Omega^{n-3}(M)) = 0$, that is a closed compactly supported current of degree 2. A closed compactly supported 2-form $\sigma$ induces such a current $\lambda$, by $\lambda(\gamma)=\int_M \gamma\wedge \sigma$. The corresponding cocycle 
\begin{align}\label{eq:LichnerowiczCocycle}
    \omega_\lambda(X_\alpha,X_\beta)= \int_M(\iota_{X_\alpha}\iota_{X_\beta}\mu)\wedge \sigma= \int_M\sigma(X_\alpha,X_\beta)\mu
\end{align}
is called Lichnerowiz cocycle (cf \cite{Lichnerowicz1974}).  Alternatively a (smooth) closed cycle $C$ in singular homology induces a current $\lambda$ by $\lambda(\gamma)=\int_C\gamma$ (cf. \cite[Section 2.1]{JanssensVizmanSurvey2016}). The cocycle then reads: 
\begin{align}\label{eq:OtherCocycle}
    \omega_\lambda(X_\alpha,X_\beta)= \int_C\iota_{X_\alpha}\iota_{X_\beta}\mu.
\end{align}

If two functionals $\lambda$ and $\lambda'$ agree on $d\Omega^{n-3}(M)$, 
then their difference $\eta = \lambda - \lambda'$ defines a linear functional on $\Omega^{n-2}(M)/d\Omega^{n-3}(M) \simeq \fX_{\ex}(M,\mu)$.
So $\omega_{\lambda}$ is cohomologous to $\omega_{\lambda'}$, and we obtain the following result:

\begin{Corollary}
The map $\lambda \mapsto \omega_{\lambda}$ induces a linear isomorphism 
$H^{n-2}_{\dR}(M)' \stackrel{\sim}{\rightarrow} H^2(\fX_{\ex}(M,\mu),\R)$.
\end{Corollary}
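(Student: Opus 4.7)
The strategy is to make the correspondence $\lambda \mapsto \omega_\lambda$ into an explicit linear map from $\oline{\Omega}{}^{n-2}(M)'$ to $H^2(\fX_{\ex}(M,\mu),\R)$, identify its image and kernel via the vanishing $H^2(\oline{\Omega}{}^{n-2}(M),\R)=0$ from Theorem~\ref{prop:main}, and then dualize the short exact sequence $H^{n-2}_{\dR}(M) \hookrightarrow \oline{\Omega}{}^{n-2}(M) \twoheadrightarrow \fX_{\ex}(M,\mu)$ by Hahn--Banach.

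Concretely, any $\bar\lambda \in \oline{\Omega}{}^{n-2}(M)'$ lifts to a $\lambda \in \Omega^{n-2}(M)'$ vanishing on $d\Omega^{n-3}(M)$, and the formula $\omega_\lambda(X_\alpha, X_\beta) := \lambda(L_{X_\alpha}\beta)$ descends to a well-defined alternating form on $\fX_{\ex}(M,\mu)$: if $\beta$ is replaced by $\beta+\delta$ with $d\delta=0$, the difference $L_{X_\alpha}\delta = d\iota_{X_\alpha}\delta$ is exact and hence killed by $\lambda$. Using the Leibniz bracket $[\alpha,\beta] = L_{X_\alpha}\beta$ on $\Omega^{n-2}(M)$, a direct computation gives $\pi^*\omega_\lambda = -\dd \bar\lambda$, where $\pi: \oline{\Omega}{}^{n-2}(M) \to \fX_{\ex}(M,\mu)$ is the quotient map and $\dd$ the Chevalley--Eilenberg differential. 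In particular $\omega_\lambda$ is a continuous $2$-cocycle, and we obtain a linear map $\Phi : \oline{\Omega}{}^{n-2}(M)' \to H^2(\fX_{\ex}(M,\mu),\R)$, $\bar\lambda \mapsto [\omega_\lambda]$.

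Surjectivity of $\Phi$ is immediate from Theorem~\ref{prop:main}: for any continuous $2$-cocycle $\omega$ on $\fX_{\ex}(M,\mu)$, the pullback $\pi^*\omega$ is a continuous $2$-cocycle on $\oline{\Omega}{}^{n-2}(M)$, hence of the form $-\dd\bar\lambda$ for some $\bar\lambda$, and then $\pi^*(\omega - \omega_\lambda)=0$ forces $\omega = \omega_\lambda$ because $\pi$ is surjective. For the kernel, suppose $\omega_\lambda = -\dd\eta$ with $\eta \in \fX_{\ex}(M,\mu)'$. Pulling back gives $\dd(\bar\lambda - \pi^*\eta)=0$ on $\oline{\Omega}{}^{n-2}(M)$, and since the latter is perfect as a Lie algebra (being the Lie quotient of the perfect Leibniz algebra of Theorem~\ref{Thm:perfect}), the continuous functional $\bar\lambda - \pi^*\eta$ vanishes on all brackets and hence everywhere. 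Thus $\bar\lambda = \pi^*\eta$ vanishes on $\ker\pi = H^{n-2}_{\dR}(M)$; conversely, any $\bar\lambda$ annihilating $H^{n-2}_{\dR}(M)$ factors through a continuous $\eta$ on $\fX_{\ex}(M,\mu)$ by the open mapping theorem for Fr\'echet spaces, and this $\eta$ satisfies $\omega_\lambda = -\dd\eta$. Hence $\ker\Phi$ is exactly the annihilator of $H^{n-2}_{\dR}(M)$ in $\oline{\Omega}{}^{n-2}(M)'$.

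It remains to dualize. The subspace $H^{n-2}_{\dR}(M) = \Omega^{n-2}_{\cl}(M)/d\Omega^{n-3}(M)$ is closed in $\oline{\Omega}{}^{n-2}(M)$, because $d\Omega^{n-3}(M)$ is closed by Lemma~\ref{exareclosed} and $\Omega^{n-2}_{\cl}(M)=\ker d$ is closed in $\Omega^{n-2}(M)$. By Hahn--Banach the restriction $\oline{\Omega}{}^{n-2}(M)' \twoheadrightarrow H^{n-2}_{\dR}(M)'$ is surjective, with kernel the annihilator of $H^{n-2}_{\dR}(M)$, which is exactly $\ker\Phi$. Passing $\Phi$ to the quotient yields the desired isomorphism $H^{n-2}_{\dR}(M)' \xrightarrow{\sim} H^2(\fX_{\ex}(M,\mu),\R)$. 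The only genuinely non-trivial ingredient is Theorem~\ref{prop:main}; the main technical point to watch is the closedness of $H^{n-2}_{\dR}(M)$ inside $\oline{\Omega}{}^{n-2}(M)$, on which the Hahn--Banach identification depends.
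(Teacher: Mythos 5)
Your proof is correct and follows essentially the same route as the paper: both rest on the vanishing $H^2(\oline{\Omega}{}^{n-2}(M),\R)=0$ to show every cocycle equals some $\omega_\lambda$, and on the observation that $[\omega_\lambda]$ depends only on the class of $\lambda$ in $H^{n-2}_{\dR}(M)'$. You merely unwind the universality statement directly and supply the details (perfectness of $\oline{\Omega}{}^{n-2}(M)$ for the kernel computation, closedness of $H^{n-2}_{\dR}(M)$ and Hahn--Banach for the identification with $H^{n-2}_{\dR}(M)'$) that the paper leaves implicit in its two-line derivation.
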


Recall that $\ceX(M,\mu)$ is the Lie algebra of exact divergence-free vector fields that admit a compactly supported potential.
Since the Lie algebras $ \oline{\Omega}{}^{n-2}_c(M)$ and $\ceX(M,\mu)$ are not Fréchet when $M$ is non-compact, 
we obtain universality only for finite-dimensional extensions:

\begin{Theorem} Let $M$ be a smooth manifold of dimension at least $3$, and let $\mu$ be a volume form. Then the central extension 
	$$
	H^{n-2}_{c, \dR}(M)\to \oline{\Omega}{}_c^{n-2}(M)\to \ceX(M,\mu)
	$$
	is 
	universal for extensions by finite-dimensional spaces.
\end{Theorem}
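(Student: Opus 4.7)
The plan is to imitate the proof of Theorem \ref{Thm:UniversalNoncompactLA} and reduce the claim to Theorem \ref{prop:mainc} via Lemma \ref{lem:closedimpliesuniversal}. First I would verify that the hypotheses of that lemma hold for $\hat{\fg} = \oline{\Omega}{}_c^{n-2}(M)$: it is perfect because the Leibniz algebra $\Omega^{n-2}_{c}(M)$ is perfect by Theorem \ref{Thm:perfect}, and perfectness of a Leibniz algebra descends to its maximal Lie-algebra quotient; and it is complete because strict inductive limits of Fréchet spaces are complete. Thus, by Lemma \ref{lem:closedimpliesuniversal}, it suffices to show that $H^{2}(\oline{\Omega}{}_{c}^{n-2}(M), \mathfrak{z}') = 0$ for every finite-dimensional vector space $\mathfrak{z}'$.

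Here is where this argument necessarily diverges from the non-compact case: in the Fréchet setting one invokes \cite[Proposition 2.10]{Neeb2002} to pass from $\mathbb{R}$-valued cohomology to cohomology with arbitrary locally convex coefficients, but $\oline{\Omega}{}_c^{n-2}(M)$ is only an LF-space, so this tool is not directly available. For finite-dimensional coefficients, however, the reduction is elementary. Picking a basis $e_{1},\ldots,e_{k}$ of $\mathfrak{z}'$, any continuous $\mathfrak{z}'$-valued 2-cocycle $\omega$ decomposes uniquely as $\omega = \sum_{i=1}^{k}\omega_{i}\,e_{i}$ with $\omega_{i}\in C^{2}(\oline{\Omega}{}_c^{n-2}(M),\R)$ continuous cocycles. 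By Theorem \ref{prop:mainc}, each $\omega_{i} = \dd\eta_{i}$ for some continuous linear functional $\eta_{i}\in \oline{\Omega}{}_{c}^{n-2}(M)'$. Setting $\eta := \sum_{i}\eta_{i}\,e_{i}$ gives a continuous primitive of $\omega$, so $H^{2}(\oline{\Omega}{}_{c}^{n-2}(M),\mathfrak{z}') = 0$.

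The main obstacle bypassed by this approach is the failure of the LF-topology to be Fréchet, which would be an issue for infinite-dimensional $\mathfrak{z}'$; this is precisely why the statement is restricted to finite-dimensional central extensions. The kernel of the universal extension is identified with $H^{n-2}_{c,\dR}(M)$ via the already-established exact sequence \eqref{eq:CptSupportSequence}, so combining the two conclusions yields universality in the stated class.
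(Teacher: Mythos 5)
Your proposal is correct and follows essentially the same route as the paper: the paper's proof likewise reduces to Theorem \ref{prop:mainc} by observing that $H^{2}(\hat{\mathfrak g},\cdot)$ commutes with finite direct sums (which is exactly your basis decomposition) and then invokes Lemma \ref{lem:closedimpliesuniversal}. Your additional remarks on perfectness, completeness of the strict inductive limit, and the reason the LF-topology forces the finite-dimensionality restriction are accurate elaborations of what the paper leaves implicit.
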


\begin{proof}Since $H^2(\hat{\mathfrak g},\cdot)$ commutes with finite direct sums, Theorem \ref{prop:mainc} implies that  $H^2(\hat{\mathfrak g},\mathfrak z)=0$ for any finite-dimensional space $\mathfrak z$. The statement of the theorem then follows from Lemma \ref{lem:closedimpliesuniversal}.
\end{proof}
Similar to the reasoning above, the map $\lambda \mapsto \omega_{\lambda}$ (now for $\lambda \in \Omega^{n-2}_{c}(M)'$ with 
$\lambda(d\Omega_{c}^{n-3}(M))=0$) yields a linear isomorphism 
\begin{equation}
H^{n-2}_{c,\dR}(M)'\stackrel{\sim}{\rightarrow} H^2(\ceX(M,\mu),\R).
\end{equation}

\subsection{\texorpdfstring{The second cohomology of $\fX(M,\mu)$ and $\fX_c(M,\mu)$}{Second cohomology of divergence-free vector fields}}

To compute the second continuous cohomology group for the Lie algebra of divergence-free vector fields $\fX(M,\mu)$, we use the 
short exact sequence of Fr\'echet Lie algebras
\begin{equation}\label{short}0 \rightarrow \fX_{\ex}(M,\mu) \rightarrow \fX(M,\mu) \rightarrow H^{n-1}_{\dR}(M) \rightarrow 0.\end{equation}
Here $ \fX_{\ex}(M,\mu)$ is a perfect ideal of $\fX(M,\mu)$ and $H^{n-1}_{\dR}(M)$ is endowed with the trivial Lie bracket. 
\begin{Theorem} Let $M$ be a smooth manifold of dimension at least 3, and let $\mu$ be a volume form. Then
\[H^2(\fX(M,\mu),\R) = H_{\dR}^{n-2}(M,\R)' \oplus \Lambda^2 H_{\dR}^{n-1}(M,\R)'.
\]
\end{Theorem}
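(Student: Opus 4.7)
The plan is to exploit the short exact sequence \eqref{short}, analysing any continuous 2-cocycle on $\fX(M,\mu)$ by restricting it to the perfect ideal $\fX_{\ex}(M,\mu)$ and then descending the residual part to the abelian quotient $H^{n-1}_{\dR}(M)$. Two natural families of cocycles realise the right hand side. First, for a closed $(n{-}2)$-current $\lambda$ (i.e.\ a continuous functional on $\Omega^{n-2}(M)$ vanishing on $d\Omega^{n-3}(M)$), the formula $\omega_\lambda(X,Y) := \lambda(\iota_X\iota_Y\mu)$ extends the Lichnerowicz cocycle~\eqref{eq:LichnerowiczCocycle} from $\fX_{\ex}$ to $\fX(M,\mu)$; the cocycle identity follows from the Cartan-calculus identity $\sum_{\mathrm{cyc}}\iota_{[X,Y]}\iota_Z\mu = d(\iota_X\iota_Y\iota_Z\mu)$ valid for divergence-free $X,Y,Z$, combined with $\lambda\circ d = 0$. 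Second, for $\phi\in\Lambda^2 H^{n-1}_{\dR}(M,\R)'$, the pullback $\omega_\phi(X,Y) := \phi([\iota_X\mu]\wedge[\iota_Y\mu])$ along $X \mapsto [\iota_X\mu]$ is automatically a cocycle since $[\iota_{[X,Y]}\mu] = [d(\iota_X\iota_Y\mu)] = 0$. These assemble into a linear map $H^{n-2}_{\dR}(M,\R)'\oplus\Lambda^2 H^{n-1}_{\dR}(M,\R)' \to H^2(\fX(M,\mu),\R)$.

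For surjectivity, take any continuous 2-cocycle $\omega$ on $\fX(M,\mu)$ and restrict it to $\fX_{\ex}(M,\mu)$. By Theorem~\ref{Thm:UniversalNoncompactLA} and the Corollary following it, $\omega|_{\fX_{\ex}\times\fX_{\ex}}$ is cohomologous to $\omega_\lambda|_{\fX_{\ex}\times\fX_{\ex}}$ for a unique class $\lambda\in H^{n-2}_{\dR}(M,\R)'$. Choosing a continuous linear section of the projection $\fX(M,\mu)\twoheadrightarrow H^{n-1}_{\dR}(M)$, I extend the potentiating 1-cochain on $\fX_{\ex}$ to a continuous 1-cochain on $\fX(M,\mu)$ and subtract its coboundary along with $\omega_\lambda$; we may thus assume $\omega$ vanishes on $\fX_{\ex}(M,\mu)\times\fX_{\ex}(M,\mu)$.

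The crucial reduction then exploits the perfectness of $\fX_{\ex}(M,\mu)$ (Corollary~\ref{PerfectLA}). Writing $X=\sum_k[A_k,B_k]$ with $A_k,B_k\in\fX_{\ex}$, for any $Z\in\fX(M,\mu)$ the Jacobi identity for $\omega$ gives
\[
    \omega([A_k,B_k],Z) = -\omega([B_k,Z],A_k) - \omega([Z,A_k],B_k),
\]
and both right hand terms vanish because $\fX_{\ex}$ is an ideal, so $[B_k,Z],[Z,A_k]\in\fX_{\ex}$, while $A_k,B_k\in\fX_{\ex}$. Hence $\omega$ vanishes on $\fX_{\ex}\times\fX$, and descends to a continuous alternating bilinear form on the abelian quotient $\fX(M,\mu)/\fX_{\ex}(M,\mu) \cong H^{n-1}_{\dR}(M)$, yielding $\phi\in\Lambda^2 H^{n-1}_{\dR}(M,\R)'$ with $[\omega] = [\omega_\lambda] + [\omega_\phi]$. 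For injectivity, note that $\omega_\phi$ vanishes on $\fX_{\ex}\times\fX_{\ex}$: a coboundary relation $\omega_\lambda + \omega_\phi = \dd\eta$ restricts to $\omega_\lambda|_{\fX_{\ex}} = \dd(\eta|_{\fX_{\ex}})$, forcing $\lambda = 0$ by Theorem~\ref{Thm:UniversalNoncompactLA}; applying $\omega_\phi = \dd\eta$ to $X,Y\in\fX_{\ex}$ gives $\eta([X,Y]) = 0$, so $\eta|_{\fX_{\ex}} = 0$ by perfectness, and applied to arbitrary $X,Y\in\fX(M,\mu)$ it then gives $\omega_\phi(X,Y) = -\eta([X,Y]) = 0$ because $[X,Y]\in\fX_{\ex}$, forcing $\phi = 0$.

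The main obstacle I anticipate is the continuous extension of the 1-cochain from $\fX_{\ex}(M,\mu)$ to $\fX(M,\mu)$, which requires a continuous linear section of \eqref{short}. This is automatic when $H^{n-1}_{\dR}(M)$ is finite-dimensional (in particular for compact $M$), but in general one must argue that $d\Omega^{n-2}(M)$ admits a topological complement inside $\Omega^{n-1}_{\mathrm{cl}}(M)$, or else organise the argument via the Hochschild--Serre spectral sequence in continuous cohomology for the sequence \eqref{short}, whose $E_2$-page directly reads $E_2^{0,2}\oplus E_2^{2,0} = H^{n-2}_{\dR}(M,\R)'\oplus\Lambda^2 H^{n-1}_{\dR}(M,\R)'$, with $E_2^{1,1}=0$ since $\fX_{\ex}$ is perfect.
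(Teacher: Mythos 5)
Your proposal is correct and follows essentially the same route as the paper: restrict to the perfect ideal $\fX_{\ex}(M,\mu)$, identify the restriction with a Lichnerowicz cocycle via Theorem~\ref{Thm:UniversalNoncompactLA}, subtract a globally defined extension, and use perfectness of the ideal to push the remainder down to the abelian quotient $H^{n-1}_{\dR}(M)$. The one place you diverge is the ``main obstacle'' you flag at the end: no continuous linear section of \eqref{short} (and no spectral sequence) is needed, because the object you must extend is a single continuous linear functional $\lambda \colon \fX_{\ex}(M,\mu) \to \R$ on a subspace of a locally convex space, and the Hahn--Banach theorem for locally convex spaces extends it directly to $\lambda' \colon \fX(M,\mu) \to \R$ --- this is exactly what the paper does, and it removes the only gap in your argument.
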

\begin{proof}
Let $\psi' \colon \fX(M,\mu) \times \fX(M,\mu) \rightarrow \R$ be a continuous 2-cocycle. Then its restriction $\psi$ to  $\fX_{\ex}(M,\mu)$ 
is cohomologous to a Lichnerowicz cocycle $\omega$ of the form \eqref{eq:LichnerowiczCocycle}, $\psi = \omega + \dd \lambda$ for a continuous 
linear functional $\lambda \colon \fX_{\ex}(M,\mu) \rightarrow \R$. 
If we extend $\omega$ to $\fX(M,\mu)$ by the same formula $\omega'(X,Y) := \int_M\sigma(X,Y)\mu$, then the result is still a cocycle \cite{VizmanPapucHonour}:
\[\textstyle
\omega'([X,Y], Z]) + \mathrm{cycl.} = \int_{M}\sigma([X,Y], Z])\mu + \mathrm{cycl.} = \int_{M}(L_{X}(\sigma(Y,Z)) \mu + \mathrm{cycl.} =0.
\]
By the Hahn--Banach Theorem for locally convex vector spaces, $\lambda$ extends to a continuous linear functional $\lambda' \colon \fX(M,\mu) \rightarrow \R$. 
Then $\gamma' := \psi' - \omega' - \dd\lambda'$ is a 2-cocycle on $\fX(M,\mu)$ that vanishes on $\fX_{\ex}(M,\mu) \times \fX_{\ex}(M,\mu)$.
Since $\fX_{\ex}(M,\mu) \subseteq \fX(M,\mu)$ is a perfect ideal, $\gamma'$ vanishes on $\fX_{\ex}(M,\mu) \times \fX(M,\mu)$
by the cocycle identity:
\[\gamma'(\fX_{\ex}(M,\mu), \fX(M,\mu)) = \gamma'([\fX_{\ex}(M,\mu), \fX_{\ex}(M,\mu)], \fX(M,\mu)) \subseteq 
\gamma'(\fX_{\ex}(M,\mu), [\fX_{\ex}(M,\mu), \fX(M,\mu)]) = \{0\}.
\]
So $\gamma'$ descends to a continuous 2-cocycle on the abelian Lie algebra $\fX(M,\mu)/\fX_{\ex}(M,\mu) = H^{n-1}_{\dR}(M,\mu)$. 
\end{proof}

A similar argument works in the compactly supported setting, using the short exact sequence of LF Lie algebras
\begin{equation}
0 \rightarrow \ceX(M,\mu) \rightarrow \fX_c(M,\mu) \rightarrow H^{n-1}_{c,\dR}(M) \rightarrow 0.
\end{equation}
We obtain
\begin{equation}
H^2(\fX_c(M,\mu) ,\R)
=H_{c,\dR}^{n-2}(M)'\oplus\Lambda^2H_{c,\dR}^{n-1}(M)'. 
\end{equation}

\section{Universal central extensions of Lie groups}\label{sec:LieGroups}

Let $M$ be a compact 3-manifold with an integral volume form $\mu$.
Then $\Diff(M,\mu)$ is a Fr\'echet--Lie group with Lie algebra $\fX(M,\mu)$ \cite{Hamilton1982}, 
and the flux homomorphism 
\[\mathrm{flux}_{\mu} \colon \fX(M,\mu) \rightarrow H^2_{\dR}(M): X \mapsto [i_{X}\mu]\]
integrates to a smooth Lie group homomorphism 
$\mathrm{Flux}_{\mu} \colon \Diff(M,\mu)_{0} \rightarrow J^2(M)$ 
from the connected identity component $\Diff(M,\mu)_{0}$ of $\Diff(M,\mu)$ to the Jacobian torus 
$J^2(M) = H^{2}_{\dR}(M)/(H^2(M,\Z)\otimes_{\Z}\R)$.
The connected identity component of the kernel of the flux  homomorphism is the group $\Diff_{\ex}(M,\mu)$ of exact volume preserving diffeomorphisms. It is a Fr\'echet--Lie group \cite{Gloeckner2006}, \cite[Thm.~III.11]{NeebWagemann2008} 
with Lie algebra $\fX_{\ex}(M,\mu)$ \cite{NeebVizman2003},
\cite[Prop.~3.8]{DiezJanssensNeebVizman2021}.

In joint work with Peter Kristel \cite{janssensHowActionThat2024}, 
we have constructed a Fr\'echet--Lie group extension $\widehat{\Diff}(M,\mu)_{\cG}$
of $\Diff_{\ex}(M,\mu)$ that covers the universal Lie algebra extension 
$\oline{\Omega}{}^1(M) \rightarrow \fX_{\ex}(M,\mu)$.
Using Neeb's Recogniton Theorem \cite{Neeb2002}, 
we show that, up to covering, this is the universal central extension of the group 
$\Diff_{\ex}(M,\mu)$ of exact volume preserving diffeomorphisms.

\subsection{Diffeomorphisms that stabilize a bundle gerbe}

The construction of the Lie group extension uses the 2-category of bundle gerbes.
Since the 3-form $\mu$ is integral, there exists a $\U(1)$-bundle gerbe with connection $\cG$ on $M$ whose curvature is~$\mu$.
Bundle gerbes with this property are in general not unique; they form a torsor over $H^2(M,\U(1)) = \mathrm{Hom}(H_2(M,\Z), \U(1))$.

By 
\cite[Rk.~3.4]{DiezJanssensNeebVizman2021},
a volume-preserving diffeomorphism $\phi\in \Diff(M,\mu)$ is exact if it lies in the connected identity component $\Diff(M,\mu)_{0}$ of $\Diff(M,\mu)$,
and if $\phi^*\cG$ is isomorphic to $\cG$ in the 2-category of bundle gerbes with connection \cite{Waldorf2007},
\begin{equation}
\Diff_{\ex}(M,\mu) = \{\phi \in \Diff(M,\mu)_{0}\,;\, \phi^*\cG \simeq \cG\}.
\end{equation}

If $\phi^*\cG$ is isomorphic to $\cG$, then the 1-morphisms $A \colon \phi^*\cG \rightarrow \cG$ are in general not unique;
their equivalence classes $\overline{A}$ modulo 2-morphisms constitute a torsor over $H^1(M,\U(1)) = \mathrm{Hom}(H_1(M,\Z),\U(1))$.
Equipped with the multiplication 
\[
(\phi, \oline{A})\cdot (\psi, \oline{B}) = (\phi\circ \psi, \oline{B\circ \psi^*A}), 
\]
the group 
\begin{equation}
\widehat{\Diff}_{\ex}(M,\mu)_{\cG} = \{(\phi, \overline{A})\,;\, \phi \in \Diff_{\ex}(M,\mu) \text{ and } A \in \mathrm{Hom}(\phi^*\cG, \cG)\}
\end{equation}
becomes a central extension of $\Diff_{\ex}(M,\mu)$ by $H^1(M,\U(1))$. In \cite{janssensHowActionThat2024} it is shown that 
$\widehat{\Diff}_{\ex}(M,\mu)_{\cG}$ is a Fr\'echet--Lie group in a natural way, and that the central extension of Fr\'echet--Lie groups
\begin{equation}
H^1(M,\U(1)) \stackrel{i}{\longrightarrow} \widehat{\Diff}_{\ex}(M,\mu)_{\cG}  \stackrel{q}{\longrightarrow} \Diff_{\ex}(M,\mu)
\end{equation}
gives rise to the universal central extension of Lie algebras
\begin{equation}
H_{\dR}^1(M,\R) \rightarrow \oline{\Omega}{}^1(M) \rightarrow \fX_{\ex}(M,\mu).
\end{equation}
Note that the centre $H^1(M,\U(1))$ is connected if and only if $H_1(M,\Z)$ is torsion-free: if
\[\textstyle{H_1(M,\Z) \simeq \Z^{b_1} \times \prod_{i=1}^{k}(\Z/ n_i\Z)}\] 
with $b_1$ the first Betti number of $M$, then 
 $H^1(M,\U(1)) \simeq \U(1)^{b_1} \times \prod_{i=1}^{k}C_{n_i}$ with $C_{n_i} \subseteq \U(1)$ the cyclic group of order $n_i$.

\subsection{\texorpdfstring{The universal central extension of $\Diff_{\ex}(M,\mu)$ for closed 3-manifolds}{The universal central group extension for closed 3-manifolds}}

Since $\widehat{\Diff}_{\ex}(M,\mu)_{\cG}$ need not be simply connected, it is not in general the universal central extension of $\Diff_{\ex}(M,\mu)$.
However, we will use Theorem~\ref{Thm:UniversalNoncompactLA} and the Recognition Theorem~\cite{Neeb2002} to show the following:
\begin{Theorem}\label{Thm:MainTheoremGroups}
If $M$ is a compact 3-manifold with integral volume form $\mu$, then
the universal cover of the connected identity component of $\widehat{\Diff}_{\ex}(M,\mu)_{\cG}$ is the universal central extension of $\Diff_{\ex}(M,\mu)$.
\end{Theorem}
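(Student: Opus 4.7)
The plan is to invoke Neeb's Recognition Theorem from \cite{Neeb2002}, which (roughly) says that a simply connected central Fréchet--Lie group extension whose associated Lie algebra extension is universal is itself a universal central extension of Lie groups. Concretely, set $H := \widehat{\Diff}_{\ex}(M,\mu)_{\cG,0}$, the connected identity component, and let $\widetilde{G} \to H$ be its universal cover, equipped with the natural Fréchet--Lie group structure inherited from $H$. Since covering maps do not change the Lie algebra, $\mathrm{Lie}(\widetilde{G}) = \mathrm{Lie}(H) = \oline{\Omega}{}^{1}(M)$. By Theorem~\ref{Thm:UniversalNoncompactLA} applied in dimension $n=3$, this is the universal central extension of $\fX_{\ex}(M,\mu)$ in the locally convex category, and by Theorem~\ref{Thm:perfect} it is perfect.

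Next, I would verify that the composition $p \colon \widetilde{G} \to H \to \Diff_{\ex}(M,\mu)$ is a central extension of Fréchet--Lie groups. Since $\Diff_{\ex}(M,\mu)$ is connected by construction, $p$ is surjective, and its kernel $\widetilde{Z}$ fits into a short exact sequence
\[
1 \to \pi_{1}(H) \to \widetilde{Z} \to H^{1}(M,\U(1)) \to 1,
\]
where $\pi_{1}(H)$ is discrete. To verify that $\widetilde{Z}$ is central in $\widetilde{G}$, fix $\tilde{z} \in \widetilde{Z}$ and consider the continuous map $\tilde{g} \mapsto \tilde{g}\tilde{z}\tilde{g}^{-1}\tilde{z}^{-1}$. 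Its image in $H$ is trivial, since $H^{1}(M,\U(1))$ lies in the center of $H$; hence this map takes values in the discrete group $\pi_{1}(H)$. Connectedness of $\widetilde{G}$ forces it to be constant, equal to its value at $\tilde{g}=e$, which is the identity. Hence $\widetilde{Z}$ is central, and $p$ is a central extension of Fréchet--Lie groups whose Lie algebra extension is precisely the universal one $\oline{\Omega}{}^{1}(M) \to \fX_{\ex}(M,\mu)$.

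Finally, I would apply the Recognition Theorem: since $\widetilde{G}$ is simply connected, since $\oline{\Omega}{}^{1}(M)$ is perfect, and since the associated Lie algebra extension is universal, the group extension $\widetilde{G} \to \Diff_{\ex}(M,\mu)$ is the universal central extension of Fréchet--Lie groups. As in the proof of Lemma~\ref{lem:closedimpliesuniversal}, universality at the group level is then obtained by pulling back an arbitrary central extension along $p$, trivializing it using vanishing of $H^{2}(\oline{\Omega}{}^{1}(M), \mathfrak{z}')$ at the Lie algebra level (Theorem~\ref{prop:main}), and integrating the resulting splitting; uniqueness follows from perfectness of $\widetilde{G}$.

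The principal difficulty is ensuring that all hypotheses of Neeb's Recognition Theorem are satisfied in the infinite-dimensional Fréchet setting. In particular, one has to confirm that $\widetilde{G}$ carries a well-defined Fréchet--Lie group structure (the universal cover of an infinite-dimensional Lie group requires care but is standard, cf.\ \cite{Neeb2002}), that the period homomorphism $\pi_{2}(\Diff_{\ex}(M,\mu)) \to H^{1}_{\dR}(M,\R)$ arising from the Lie algebra extension produces no additional obstruction beyond what is killed by passing to the universal cover, and that the non-connectedness of $H^{1}(M,\U(1))$ (which may contain finite cyclic factors from torsion in $H_{1}(M,\Z)$) is compatible with the statement of the Recognition Theorem. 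Once these technicalities are handled, the theorem follows.
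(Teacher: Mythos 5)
Your proof is correct, and for the crucial step --- centrality of the kernel of $\widetilde{\hat G}_0\to\Diff_{\ex}(M,\mu)$ --- you take a genuinely different and substantially shorter route than the paper. The paper assembles a commutative diagram built on the fibred product $\widehat G\times_G\widetilde G$, proves exactness of all rows and columns, and then runs a diagram chase: a kernel element $x$ is corrected by a central element $x_\gamma$ lying over its class in $\pi_1(G)$, so that $x(x_\gamma)^{-1}$ lands in the central subgroup $\widetilde{Z_0}\simeq H^1_{\dR}(M)$. You instead observe that for $\tilde z$ in the kernel, the continuous commutator map $\tilde g\mapsto \tilde g\tilde z\tilde g^{-1}\tilde z^{-1}$ projects to the identity in $\widehat G_0$ (since the image of $\tilde z$ there lies in the central subgroup $Z\cap\widehat G_0$), hence takes values in the discrete group $\pi_1(\widehat G_0)$ and is constant equal to $e$ by connectedness of $\widetilde{\hat G}_0$. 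This is the classical ``a discrete normal subgroup of a connected group is central'' argument adapted to a two-step kernel, and it replaces the entire diagram chase; the conclusion via the Recognition Theorem (perfectness of $\fX_{\ex}(M,\mu)$, $H^2(\oline{\Omega}{}^1(M),\R)=0$ from Theorem~\ref{prop:main}, and simple connectedness) is then identical to the paper's. Two minor points: the quotient of your kernel by $\pi_1(\widehat G_0)$ is $Z_r=H^1(M,\U(1))\cap\widehat G_0$, not necessarily all of $H^1(M,\U(1))$ (the paper tracks this defect via $\pi_0(Z_r)$), though this does not affect your centrality argument; and the worry about the period homomorphism is moot, since the group extension is already given and no Lie algebra cocycle needs to be integrated. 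What the paper's longer argument additionally yields is an explicit description of the kernel of $\widetilde{\hat G}_0\to G$ as assembled from $\widetilde{Z_0}\simeq H^1_{\dR}(M)$, $\pi_0(Z_r)$ and $\pi_1(G)$; this extra information is not needed for universality.
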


We will use the following characterization of the universal central extension of a simply connected Fr\'echet--Lie group $H$, which was also employed in \cite[Theorem 7.1]{janssensHowActionThat2024}:
\begin{Theorem}[Theorem 4.13 in \cite{Neeb2002}]\label{neebcor}
	Consider a central extension $Z\to \Check{H}\to H$ of a Fréchet Lie group with finite-dimensional $Z$. Suppose that:
	\begin{enumerate}
		\item The Lie algebra $\mathfrak h$ is perfect,
		\item $H^2_c(\Check{\mathfrak h},\mathbb R)=0$,
		\item $\Check{H}$ is simply connected,
	\end{enumerate}
	then $Z\to \Check{H}\to H$  is universal for central extensions of $H$ by regular abelian Lie groups modeled on sequentially complete locally convex spaces.
\end{Theorem}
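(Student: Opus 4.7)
The plan is to apply Neeb's Recognition Theorem (the result quoted right after the statement) to the central extension obtained by first taking the connected identity component of $\widehat{\Diff}_{\ex}(M,\mu)_{\cG}$ and then passing to its universal cover. Write $\widehat{H}:=\widehat{\Diff}_{\ex}(M,\mu)_{\cG}$, $Z:=H^1(M,\U(1))$, and $H:=\Diff_{\ex}(M,\mu)$, so that $Z\to\widehat H\to H$ is the Fréchet--Lie group central extension constructed in \cite{janssensHowActionThat2024}.

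\textbf{Step 1: reducing to the identity component.} Since $H$ is by definition the connected identity component of $\ker(\mathrm{Flux}_\mu)$, the restriction $\widehat{H}_0\to H$ of the quotient map to the connected identity component is still surjective, with kernel $\widehat{H}_0\cap Z$. Because $Z\cong \U(1)^{b_1}\times\prod_i C_{n_i}$ is a finite-dimensional compact abelian Lie group, $\widehat H_0\cap Z$ is a closed subgroup of dimension at most $b_1$. Thus $\widehat H_0\cap Z\to\widehat H_0\to H$ is a central extension of Fréchet--Lie groups by a finite-dimensional abelian Lie group, and its derived Lie algebra extension is the universal one $H_{\dR}^1(M,\R)\to\oline\Omega{}^1(M)\to\fX_{\ex}(M,\mu)$ from Theorem~\ref{Thm:UniversalNoncompactLA} (note $n-2=1$ since $\dim M=3$).

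\textbf{Step 2: taking the universal cover.} Let $\check H\to\widehat H_0$ be the universal covering Fréchet--Lie group, with discrete kernel $\pi_1(\widehat H_0)$. Its Lie algebra is again $\oline\Omega{}^1(M)$, and $\check H$ is simply connected by construction. The kernel $\check Z$ of the composed map $\check H\to H$ is the preimage of $\widehat H_0\cap Z$, hence sits in a short exact sequence $1\to\pi_1(\widehat H_0)\to\check Z\to \widehat H_0\cap Z\to 1$ of abelian groups. In particular $\check Z$ is finite-dimensional. Centrality of $\check Z$ in $\check H$ follows from a standard connectedness argument: for any $\check z\in\check Z$, the commutator map $\check g\mapsto\check g\check z\check g^{-1}\check z^{-1}$ lands in the discrete subgroup $\pi_1(\widehat H_0)$, so is constant on the connected group $\check H$, and its value at the identity is trivial. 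Hence $\check Z\to\check H\to H$ is a central extension by a finite-dimensional abelian Lie group.

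\textbf{Step 3: verifying Neeb's hypotheses.} Condition (1) of the Recognition Theorem: $\fX_{\ex}(M,\mu)$ is perfect by Corollary~\ref{PerfectLA}. Condition (2): the Lie algebra $\oline\Omega{}^1(M)$ of $\check H$ satisfies $H^2(\oline\Omega{}^1(M),\R)=0$ by Theorem~\ref{prop:main} applied with $\dim M=3$. Condition (3): $\check H$ is simply connected by construction. Therefore Theorem~\ref{neebcor} concludes that $\check Z\to\check H\to H$ is universal among central extensions of $H$ by regular abelian Lie groups modeled on sequentially complete locally convex spaces.

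I expect no single hard step; the subtleties rather lie in the bookkeeping of Step 2, namely checking that the universal cover of $\widehat H_0$ is still a Fréchet--Lie group mapping onto $H$ with central, finite-dimensional kernel, and that this kernel really is abelian (both $\pi_1(\widehat H_0)$ and $\widehat H_0\cap Z$ are, but one should verify no nontrivial extension issue arises beyond the already-central $\pi_1$). Once these points are settled, the theorem is an immediate consequence of plugging the vanishing of $H^2(\oline\Omega{}^1(M),\R)$ established in Section~\ref{sec:closed} into Neeb's Recognition Theorem.
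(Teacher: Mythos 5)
There is a genuine gap, and it is a fundamental one: your proposal does not address the statement at all. Theorem~\ref{neebcor} is the \emph{general} Recognition Theorem — a criterion, quantified over arbitrary Fréchet--Lie groups $H$ and arbitrary central extensions $Z \to \check{H} \to H$ with finite-dimensional $Z$, asserting universality whenever conditions (1)--(3) hold. Your proof instead \emph{assumes} this theorem ("The plan is to apply Neeb's Recognition Theorem...") and applies it to the single example $H = \Diff_{\ex}(M,\mu)$, $\widehat H = \widehat{\Diff}_{\ex}(M,\mu)_{\cG}$. What you have written is therefore an argument for Theorem~\ref{Thm:MainTheoremGroups} (the application, proved later in the paper by a diagram chase), not for Theorem~\ref{neebcor} itself; as a proof of \ref{neebcor} it is circular, and no amount of bookkeeping about $\widehat H_0$, its universal cover, or the kernel $\check Z$ can establish a statement that ranges over all Fréchet--Lie groups by verifying one instance.

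For comparison, the paper's own "proof" of Theorem~\ref{neebcor} is not an argument from scratch either — the result is quoted from the literature (Theorem 4.13 in \cite{Neeb2002}). The proof consists of a reduction to the formulation in \cite[Theorem 7.1]{janssensHowActionThat2024}: one checks that the two formulations are equivalent, the only point requiring justification being that a third condition appearing there (involving the commutator subgroup of $\check H$) may be omitted, because perfectness of $\mathfrak h$ already forces that commutator group to be all of $\check H$. That observation — why hypothesis (1) makes the extra condition in the cited theorem automatic — is the actual mathematical content expected here, and it is absent from your proposal. Incidentally, your Step 2 centrality argument (the commutator map into a discrete kernel is constant on a connected group) is correct and is a cleaner route to part of what the paper's diagram chase in the proof of Theorem~\ref{Thm:MainTheoremGroups} accomplishes; but that belongs to a different proof than the one you were asked to supply.
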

\begin{proof}
We used here a formulation equivalent to \cite[Theorem 7.1]{janssensHowActionThat2024} with the modification that condition 3) there is omitted since the perfectness of $\mathfrak h$ implies that the commutator group mentioned in this condition is the whole group $\tilde H$. 
\end{proof}

For brevity we write $G=\Diff_{\ex}(M,\mu)$, $\widehat{G}=\widehat{\Diff}_{\ex}(M,\mu)_{\cG}$ and $Z := H^1(M,\U(1))$ in the following.
For a Fr\'echet--Lie group $H$ with a smooth morphism $F \colon H \rightarrow G$, 
we denote the pullback by $\widehat{H} := F^*\widehat{G}$. Then
\begin{equation}\label{eq:bullbackCE}
\widehat{H} = \{(\hat{g},h) \in  \widehat{G} \times H\,;\,  q(\hat{g}) = F(h)\}
\end{equation}
is a central extension of $H$ by $Z$, denoted
\begin{equation}\label{eq:baseq}
Z\overset{j_H}{\longrightarrow}\widehat{H}\overset{q_H}{\longrightarrow} H.
\end{equation}
Finally, we will denote the universal cover of a Fr\'echet--Lie group $H$ by
\[
\pi_1(H)\overset{i_H}{\longrightarrow}\tilde H\overset{p_H}{\longrightarrow} H.
\]

\begin{Lemma}\label{lem:exx}
If $H$ is a 1-connected Fr\'echet--Lie group, then
$\tilde{\hat H}_0$ is a central extension of $H$. Moreover, the new central extension reads
    $$\tilde Z_0\overset{k_H}{\longrightarrow} \tilde {\hat H}_0\overset{q_{H}\circ p_{\hat H}}{\longrightarrow} H,$$
    where $\widetilde{Z}_0 \simeq H^1_{\dR}(M)$. 
\end{Lemma}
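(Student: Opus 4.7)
I would structure the proof in three stages.

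\emph{Stage 1: Restricting to identity components.} Applying the long exact homotopy sequence to the principal $Z$-bundle $\hat H \xrightarrow{q_H} H$, the hypotheses $\pi_0(H) = \pi_1(H) = 0$ give the isomorphism $Z/Z_0 \xrightarrow{\sim} \hat H / \hat H_0$; hence $Z \cap \hat H_0 = Z_0$ and $q_H$ restricts to a surjection $\hat H_0 \twoheadrightarrow H$, yielding a central extension
\[
Z_0 \longrightarrow \hat H_0 \xrightarrow{\,q_H\,} H.
\]
Concretely, given $z\in Z\cap\hat H_0$, a path in $\hat H_0$ from $e$ to $z$ projects to a loop in $H$, and lifting a null-homotopy of that loop through the bundle $\hat H\to H$ produces a path in $Z$ from $e$ to $z$.

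\emph{Stage 2: Composing with the universal cover.} Writing $p = p_{\hat H}$, the composite $q_H \circ p \colon \tilde{\hat H}_0 \to H$ is a surjective Lie group morphism with kernel $K := p^{-1}(Z_0)$. Since $p$ is a local diffeomorphism, $\mathrm{Lie}(K) = \mathrm{Lie}(Z_0) = H^1(M,\R) = H^1_{\dR}(M,\R)$. Centrality of $K$ in $\tilde{\hat H}_0$ follows from that of $Z_0$ in $\hat H_0$: for fixed $k \in K$ the continuous commutator map $g \mapsto kgk^{-1}g^{-1}$ lands in the discrete fibre $\ker p = \pi_1(\hat H_0)$, hence is constant equal to $e$ on the connected group $\tilde{\hat H}_0$.

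\emph{Stage 3: Identification with $\tilde Z_0$.} From the long exact sequence of the fibration $K \hookrightarrow \tilde{\hat H}_0 \twoheadrightarrow H$ and the vanishing of $\pi_0,\pi_1$ for both $\tilde{\hat H}_0$ and $H$, one reads off $\pi_0(K) = 0$, so $K$ is connected. Since $\tilde Z_0$ is simply connected, the inclusion $Z_0 \hookrightarrow \hat H_0$ lifts uniquely to a Lie group homomorphism $k_H \colon \tilde Z_0 \to \tilde{\hat H}_0$ with $k_H(e) = e$; its image is contained in $K$, and since $k_H$ is a local diffeomorphism onto a connected Lie group of the same finite dimension, it surjects onto $K$.

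The main obstacle is the injectivity of $k_H$, equivalently the simple connectedness of $K$. Comparing the long exact sequences of the fibrations $K \to \tilde{\hat H}_0 \to H$ and $Z_0 \to \hat H_0 \to H$ identifies $\pi_1(K)$ with the image of the transgression $\pi_2(H) \to \pi_1(Z_0)$ of the principal $Z_0$-bundle $\hat H_0 \to H$. The crucial input is therefore the vanishing of this connecting map, which in the intended application to $H = \widetilde{\Diff_{\ex}(M,\mu)}$ comes from the specific structure of the pullback central extension $\hat H = F^* \hat G$ together with the fact that $\tilde Z_0$ is a vector space.
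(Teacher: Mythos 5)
Your Stages 1 and 2 are correct and run parallel to the paper's argument. The paper also works with the fibration $q_H$: it takes a class $[\gamma]$ in the kernel, notes that $q_H\circ\gamma$ is a contractible loop in the 1-connected group $H$, lifts the null-homotopy through the bundle, and concludes that $[\gamma]$ is represented by a path in $Z_0$; centrality is obtained there by observing that multiplication on the universal cover is pointwise multiplication of paths, so a path in the central subgroup $Z_0$ commutes with every path. Your commutator-into-the-discrete-fibre argument is an equally valid substitute, and your Stage 1 is the long-exact-sequence version of the same path-lifting step.

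The gap is in Stage 3. You correctly reduce the identification $K=p_{\hat H}^{-1}(Z_0)\simeq \tilde Z_0$ to the injectivity of $k_H$, equivalently to the vanishing of the transgression $\partial\colon \pi_2(H)\to\pi_1(Z_0)$ of the bundle $\hat H_0\to H$, but you then do not prove this vanishing; you only assert that it follows from ``the specific structure of the pullback'' and from $\tilde Z_0$ being a vector space. Neither remark closes the gap: the lemma is stated for an arbitrary 1-connected Fr\'echet--Lie group $H$; by naturality the transgression of the pullback is $\partial_G\circ F_*$, which has no a priori reason to vanish; and $\tilde Z_0$ being a vector space is true of the universal cover of any torus and does not decide whether $K$ is that universal cover or a proper intermediate cover $\R^{b_1}/\mathrm{im}(\partial)$. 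Since the final assertion of the lemma is precisely that the kernel is $\tilde Z_0\simeq H^1_{\dR}(M)$, deferring this point leaves the proof incomplete. For comparison, the paper's own proof establishes exactly your surjectivity statement (every kernel element is represented by a path in $Z_0$) and is equally terse about why distinct classes in $\tilde Z_0$ stay distinct in $\tilde{\hat H}_0$; so you have isolated the one genuinely delicate step of the argument, but naming the obstruction is not the same as removing it, and a complete proof would have to supply the vanishing of $\partial$ (or reformulate the kernel as the corresponding quotient of $\tilde Z_0$).
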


\begin{proof}
We start by an element $[\gamma]\in \tilde{\hat H}_0$ which is in the kernel of the projection to $H$. The path $\gamma$ starts from $e$ and since it has to project to $e$ in $H$, it ends in a point in $Z=ker(q_H)$. So $q_H\circ\gamma$ is a loop in $H$, hence contractible. Since $q_H$ is a fiber bundle, we can lift the homotopy between $q_H\circ\gamma$ and the trivial path to a homotopy in $\hat H_0$. This means $\gamma$ is homotopic to a path inside the connected identity component $Z_0$ of $Z$, 
and the kernel of the projection is the universal cover of $Z_{0} \simeq \U(1)^{b_1}$, which is isomorphic to $\R^{b_1} \simeq H^1_{\dR}(M)$.
 A path in $Z_0$ commutes with everything because the multiplication on the universal cover is defined pointwise. 
\end{proof}

\begin{Lemma} Let $M$ be compact 3-dimensional and $\mu$ an integral volume form. Then $\widehat{\tilde G} = \hat G\times_G\tilde G$, with 
    \begin{equation}\label{eq:pullbackeqn}
    \widehat G\times_G\widetilde G=\{(a,b)\in \widehat G\times \widetilde G\,;\, q_G(a)=p_G(b)\}
    \end{equation}
    the fibred product of $\widetilde{G}$ and $\widehat{G}$ over $G$. Further,
     \begin{equation}
     Z\times \pi_1(G)\to \widehat G\times_G\widetilde G \overset{p_G\circ q_{\tilde G}}{\longrightarrow} G
     \end{equation}
    is a central extension of $G$ by $Z\times \pi_1(G)$.
    \end{Lemma}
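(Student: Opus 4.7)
The strategy is to recognize that the first asserted equality is merely the definition $\widehat{H} := F^*\widehat{G}$ unfolded for a specific choice, and that the second assertion then amounts to composing the pullback central extension with the universal covering.

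For the first equality, I would apply the general construction \eqref{eq:bullbackCE} with $H = \widetilde{G}$ and $F = p_G \colon \widetilde{G} \to G$, obtaining
\[
\widehat{\widetilde{G}} = \{(\hat{g}, \tilde{g}) \in \widehat{G} \times \widetilde{G} \mid q_G(\hat{g}) = p_G(\tilde{g})\},
\]
which is exactly the fibered product \eqref{eq:pullbackeqn}. So this half is tautological.

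For the central-extension statement, I would proceed in three short steps. First, \eqref{eq:baseq} applied to $H = \widetilde{G}$ gives that $Z \xrightarrow{j_{\widetilde{G}}} \widehat{\widetilde{G}} \xrightarrow{q_{\widetilde{G}}} \widetilde{G}$ is already a Fréchet--Lie central extension (the pullback of a Fréchet--Lie central extension along a smooth morphism inherits this structure, cf.~\cite{Neeb2002}). Second, composing with the universal covering $p_G$ yields a smooth surjection $p_G \circ q_{\widetilde{G}} \colon \widehat{\widetilde{G}} \to G$; surjectivity is automatic since both factors are surjective. Third, I would compute the kernel directly: $(\hat{g}, \tilde{g})$ lies in the kernel iff $p_G(\tilde{g}) = e$, which forces $\tilde{g} \in \pi_1(G)$, and then the fibered-product constraint $q_G(\hat{g}) = p_G(\tilde{g}) = e$ forces $\hat{g} \in Z$. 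Hence the kernel equals $Z \times \pi_1(G)$ under the obvious componentwise inclusion.

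To finish, I would verify centrality of this kernel. Since multiplication on the fibered product is componentwise, an element $(z, \gamma) \in Z \times \pi_1(G)$ commutes with every $(\hat{g}, \tilde{g}) \in \widehat{\widetilde{G}}$ iff $z$ is central in $\widehat{G}$ and $\gamma$ is central in $\widetilde{G}$. The former holds by hypothesis (the extension $Z \to \widehat{G} \to G$ is central), and the latter is the standard fact that a discrete normal subgroup of a connected (Fréchet--)Lie group lies in the centre. The whole argument is essentially bookkeeping: the only nontrivial input is the standard pullback result for Fréchet--Lie central extensions, and I do not expect any genuine obstacle.
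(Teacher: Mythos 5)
Your proposal is correct and follows essentially the same route as the paper: the first identity is the pullback construction \eqref{eq:bullbackCE} applied to $H=\widetilde{G}$, $F=p_G$, and centrality of the kernel $Z\times\pi_1(G)$ reduces to $Z$ being central in $\widehat{G}$ and $\pi_1(G)$ being central in the connected group $\widetilde{G}$ (the paper compresses this into the single remark that $Z\times\pi_1(G)$ is central in the ambient product $\widehat{G}\times\widetilde{G}$). Your version merely spells out the kernel computation and the discrete-normal-subgroup argument that the paper leaves implicit.
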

\begin{proof} 
Equation~\ref{eq:pullbackeqn} follows from \eqref{eq:bullbackCE}, and the extension is central because 
$Z\times \pi_1(G)$ is central in $\widehat G\times \widetilde G$. 
\end{proof}
We also have the following statement:
\begin{Corollary}
The universal covers of $\widehat{G}_0$ and $\widehat{\tilde{G}}_0$ are the same,
 
    $\widetilde{(\hat{\tilde{G}})}_0={\tilde{\hat{G}}}_0 = \widetilde{(\hat{G}_0\times_G\tilde G)}$.
\end{Corollary}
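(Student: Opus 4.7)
The plan is to unfold the corollary using the identification $\hat{\tilde G} = \hat G \times_G \tilde G$ from the previous Lemma together with the functoriality of the universal cover for covering maps. The claim becomes a two-step matching: first identify $\hat{\tilde G}_0$ with the pullback $\hat G_0 \times_G \tilde G$ (handling the first equality), and then observe that the natural projection to $\hat G_0$ is a covering map (handling the second).

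First I would show that $\hat{\tilde G}_0 = \hat G_0 \times_G \tilde G$ as subsets of $\hat G \times \tilde G$. For the inclusion $\hat{\tilde G}_0 \subseteq \hat G_0 \times_G \tilde G$, any $(a,b) \in \hat{\tilde G}_0$ is joined to $(e,e)$ by a path in $\hat{\tilde G}$, whose first-coordinate projection is a path in $\hat G$ from $e$ to $a$, placing $a$ in $\hat G_0$. For the reverse inclusion, note that $\hat G_0 \to G$ is surjective by path-lifting in the principal $Z$-bundle $\hat G \to G$ over the connected group $G$, so the pullback $\hat G_0 \times_G \tilde G \to \tilde G$ is a principal $Z_0$-bundle with connected base $\tilde G$ and connected fiber $Z_0 \simeq \U(1)^{b_1}$, hence connected; containing $(e,e)$, it sits inside $\hat{\tilde G}_0$. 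Passing to universal covers gives $\widetilde{(\hat{\tilde G})_0} = \widetilde{\hat G_0 \times_G \tilde G}$, which is the first and third expressions of the corollary.

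For the remaining equality, I would observe that the first projection $p_1 \colon \hat G_0 \times_G \tilde G \to \hat G_0$ is a covering map. Indeed, it is the pullback of the covering $p_G \colon \tilde G \to G$ along the smooth morphism $q_G \colon \hat G_0 \to G$, so its fiber over any $a \in \hat G_0$ is the discrete $\pi_1(G)$-torsor $p_G^{-1}(q_G(a))$. Since covering maps of connected Fr\'echet--Lie groups induce canonical isomorphisms on universal covers (the universal cover depending only on the local structure), we obtain $\widetilde{\hat G_0 \times_G \tilde G} \simeq \widetilde{\hat G_0} = \tilde{\hat G}_0$, completing the chain.

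The only delicate points are the surjectivity of $\hat G_0 \to G$ and the fact that pulling a covering back along a smooth Lie group morphism in the Fr\'echet setting still yields a covering. The first is the path-lifting argument above; the second is purely local and so carries over from the finite-dimensional case without modification, which is why no further infinite-dimensional input is needed here.
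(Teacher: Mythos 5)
Your overall strategy is the same as the paper's: the map $\widehat{G}_0\times_G\widetilde{G}\to \widehat{G}_0$ is a covering (discrete fibre $\pi_1(G)$, pulled back from $p_G\colon\widetilde G\to G$), hence induces an identification of universal covers. That second step is exactly the paper's proof and is what carries the corollary.

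There is, however, a genuine error in your first step. You claim that $\widehat{G}_0\times_G\widetilde{G}\to\widetilde{G}$ is a principal $Z_0$-bundle with connected fibre $Z_0\simeq\U(1)^{b_1}$, and conclude that the total space is connected, whence $(\widehat{\widetilde{G}})_0=\widehat{G}_0\times_G\widetilde{G}$. The fibre of $\widehat{G}_0\to G$ over a point is a torsor not under $Z_0$ but under $Z_r=Z\cap\widehat{G}_0$, which is a union of connected components of $Z=H^1(M,\U(1))$ and may be strictly larger than $Z_0$; indeed $\pi_0(Z_r)$ is the image of the connecting map $\pi_1(G)\to\pi_0(Z)$ and appears as a possibly nontrivial group in the paper's diagram (it is a quotient of the torsion data of $H_1(M,\Z)$). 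The long exact homotopy sequence of $Z_r\to\widehat{G}_0\times_G\widetilde{G}\to\widetilde{G}$ over the simply connected base $\widetilde{G}$ gives $\pi_0\bigl(\widehat{G}_0\times_G\widetilde{G}\bigr)\cong\pi_0(Z_r)$, so the pullback need not be connected and the asserted set-level equality with $(\widehat{\widetilde{G}})_0$ can fail. The corollary nevertheless survives: since $\widehat{G}_0\times_G\widetilde{G}\to\widehat{G}_0$ and $\widehat{\widetilde{G}}\to\widehat{G}$ are coverings, their restrictions to identity components are coverings of $\widehat{G}_0$, and all three universal covers in the statement (read as universal covers of identity components) coincide. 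So you should delete the connectedness claim and run your covering-map argument directly on identity components, which is what the paper does.
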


\begin{proof}
By the above Lemma, the projection $\widehat{\tilde G}\to \hat G$ has the fiber $\pi_1(G)$. In particular the fiber is discrete, i.e.\ it is a covering map. Since 
the connected component $(\widehat{\tilde G})_0$ is connected, this means that the simply connected universal cover of $(\widehat{\tilde G})_0$ is a universal cover of $\hat G_0$.
\end{proof}

We are now ready to prove Theorem~\ref{Thm:MainTheoremGroups}:

\begin{proof}[Proof of Theorem~\ref{Thm:MainTheoremGroups}]
Let $\widehat{G}_0$ be the connected identity component of $\widehat{G}$, and let $Z_r := Z \cap \widehat{G}_0$.
Then $Z_r \rightarrow \widehat{G}_{0} \rightarrow G$ is a connected central extension.
In the following commutative diagram, all rows and columns are exact:
\[
\begin{tikzcd}
		& 1 				& 1 											& 1 						& 			&   		\\
\pi_2(G) 	& \pi_1(Z_0) 		& \pi_1(\widehat{G}_{0}) 							& \pi_1(G) 				&\pi_0(Z_r) 	& 1		\\
1 		& \widetilde{Z_0}	&	\widetilde{\hat{G}}_0 = \widetilde{\hat{\tilde{G}}_0}	&\widetilde{G}				& 1			&		\\
1 		& Z_r			&	\widehat{G}_0								&		G				& 1			&		\\
\pi_1(G)	& \pi_0(Z_r)		&	1										&		1				& 			&		\\
		& 1				&											&						& 			&		
\arrow[from=2-1, to=2-2]
\arrow[from=2-2, to=2-3]
\arrow[from=2-3, to=2-4]
\arrow[from=2-4, to=2-5]
\arrow[from=2-5, to=2-6]
\arrow[from=3-1, to=3-2]
\arrow[from=3-2, to=3-3]
\arrow[from=3-3, to=3-4]
\arrow[from=3-4, to=3-5]
\arrow[from=4-1, to=4-2]
\arrow[from=4-2, to=4-3]
\arrow[from=4-3, to=4-4]
\arrow[from=4-4, to=4-5]
\arrow[from=5-1, to=5-2]
\arrow[from=5-2, to=5-3]
\arrow[from=1-2, to=2-2]
\arrow[from=2-2, to=3-2]
\arrow[from=3-2, to=4-2]
\arrow[from=4-2, to=5-2]
\arrow[from=5-2, to=6-2]
\arrow[from=1-3, to=2-3]
\arrow[from=2-3, to=3-3]
\arrow[from=3-3, to=4-3]
\arrow[from=4-3, to=5-3]
\arrow[from=1-4, to=2-4]
\arrow[from=2-4, to=3-4]
\arrow[from=3-4, to=4-4]
\arrow[from=4-4, to=5-4]
\end{tikzcd}
\]

\paragraph{Columns are exact}
The first column is exact because the kernel of $Z_r \mapsto \pi_i(Z_r)$ is the connected component $Z_0$ of $Z_r$, and 
because the universal cover of $Z_0$ yields an exact sequence
$1 \rightarrow \pi_1(Z_0) \rightarrow \widetilde{Z_0} \rightarrow Z_0 \rightarrow 1$.
The second and third columns are exact because they correspond to the universal covers of $\widehat{G}_0$ and $G$, respectively.
\paragraph{Rows are exact}
The first row is exact because it is part of the long exact sequence in homotopy corresponding to the Serre fibration $\widehat{G}_{0} \rightarrow G$ with fibre $Z_r$.
The second row is exact by Lemma \ref{lem:exx} applied to $H=\tilde G$, yielding the sequence 
\[\widetilde{Z_0} \to \tilde{\hat{\tilde G}}_0 \to \tilde G.\] 
Compatibility with the other rows follows from the identification ${\tilde{\hat{ G}}_0=\tilde{\hat{\tilde G}}}_0$. The third row is the connected component of 
the central extension $\widehat{G} \rightarrow G$. The fourth row is a shifted version of the first row.

\paragraph{The extension is central}
We need to check that the kernel of the diagonal map from $\widetilde{\hat{G}}_0 = \widetilde{\hat{\tilde{G}}_0}$ to $G$ is central.
Suppose that $x \in \widetilde{\hat{G}}_0$ maps to $1\in G$. Then a diagram chase to the left lower corner of the diagram yields an element 
$[\gamma] \in \pi_1(G)$. Now $\pi_1(G)$ occurs in the top right part of the diagram as well as a central subgroup of $\widetilde{G}$, and as such 
it gives rise to a central element $[\gamma] \in \widehat{G}$.
But the second row is a central extension by Lemma \ref{lem:exx} applied to $H = \widetilde{G}$, 
so there exists a \emph{central} element $x_{\gamma} \in \widetilde{\hat{G}}_0$ that maps to $[\gamma]$ under the map 
$\widetilde{\hat{G}}_0 \rightarrow \widetilde{G}$.

In order to show that $x$ is central in $\widetilde{\hat{G}}_0$, it therefore suffices to show that $x (x_{\gamma})^{-1}$ is central.
Let $z_{\gamma} \in Z_r \subseteq \widehat{G}_0$ be the image of $x_{\gamma}$ under $\widetilde{\hat{G}}_0 \rightarrow \widehat{G}_0$.
Then $\pi_0(z_{\gamma}) \in \pi_0(Z_r)$ is precisely the image of $[\gamma]$ under the map $\pi_1(G) \rightarrow \pi_0(Z_r)$
that comes from the Serre fibration $\widehat{G}_0 \rightarrow G$. 
Indeed, the diagram chase from $\pi_1(G)$ on the right upper part of the diagram to $\pi_0(Z_r)$ in the left lower corner proceeds by
taking a closed loop in $G$ that starts and ends at the identity, lifting it to a path in $\widehat{G}_{0}$ that starts at the identity and ends in $Z_r$, 
and taking the connected component of the fibre $Z_r$ of $\widehat{G}_{0} \rightarrow G$ determined by the end point. 

It follows that if we replace $x$ by $x (x_{\gamma})^{-1}$, the diagram chase from $\widetilde{\hat{G}}_0$ to the left lower corner 
$\pi_1(G)$ yields the identity. So $x (x_{\gamma})^{-1}$ is in the image of $\widetilde{Z_0}$, which is central in $\widetilde{\hat{G}}_0$.

This shows that the extension $\widetilde{\hat{G}}_0 \rightarrow G$ is central. Since the corresponding Lie algebra extension 
is still $\oline{\Omega}{}^1(M) \rightarrow \mathfrak X_{\ex}(M,\mu)$, the Recognition Theorem~\ref{neebcor} is applicable,
and we conclude that the central extension is universal. 
\end{proof}

\begin{Remark}
Since the construction of $\widehat{\Diff}_{\ex}(M,\mu)$ crucially depends on a fusion product on loop space (which does not appear to have an analogue for higher bundle gerbes), our construction of the universal central extension is currently restricted to manifolds $M$ of dimension 3. 
However, for $\mathrm{dim}(M)\geq 3$, there does exist a procedure, cf.\ \cite{HallerVizman2004} and \cite[Section 5.3.1 and 5.3.2]{DiezJanssensNeebVizman2021}, 
to construct central $\U(1)$-extensions of 
$\Diff_{\ex}(M,\mu)$
that integrate cocycles of the form \eqref{eq:LichnerowiczCocycle} for 
integral $[\sigma] \in H^{2}(M, \R)_{\Z}$,
as well as cocycles of the form 
\eqref{eq:OtherCocycle} 
for integral classes $[C] \in H^{n-2}(M,\Z)$
that can be represented by a smooth submanifold.
\end{Remark}

\appendix

\section{A multivector field description of the de Rham complex}\label{app:multivfcartan}

It will be convenient to identify $\alpha \in \Omega^{n-k}(M)$ with the multivector field $A \in \Gamma(\wedge^kTM)$ using  the volume form $\mu$. If $\alpha = \iota_{A}\mu$, we write $\alpha=A^{\flat}$ and $A=\alpha^{\sharp}$.  Our sign conventions for  contraction of multivector fields into a form are fixed by
$\iota_{A\wedge B}\mu = \iota_{B}\iota_{A}\mu$. With this identification, the de Rham differential $d$ on $\Omega^{\bullet}(M)$ and the Leibniz bracket on $\Omega^{n-2}(M)$  give rise to a differential $\delta(A) := d(A^{\flat})^{\sharp}$ on $\Gamma(\wedge^{\bullet}TM)$ and  and a Leibniz bracket $[A,B] := [A^{\flat}, B^{\flat}]^{\sharp}$ on $\Gamma(\wedge^{2}TM)$.

\begin{Proposition}
The differential $\delta \colon \Gamma(\wedge^kTM) \rightarrow \Gamma(\wedge^{k-1}TM)$ is given by
\begin{eqnarray*}
\delta (X_1 \wedge \cdots \wedge X_k)
&=& \sum_{1\leq i < j \leq k} (-1)^{k + i + j }[X_i, X_j] \wedge X_1 \wedge \cdots  \hat{X}_i \cdots \hat{X}_j \cdots \wedge X_k\\
& &+ \sum_{i=1}^{k}(-1)^{k + i} \Div(X_i) X_1 \wedge \cdots \hat{X}_i \cdots \wedge X_k.
\end{eqnarray*}
\end{Proposition}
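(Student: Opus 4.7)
I would proceed by induction on $k$, using Cartan's magic formula $d\iota_Y = L_Y - \iota_Y d$ applied to the closed form $\mu$, together with the standard commutation relation $[L_X, \iota_Y] = \iota_{[X,Y]}$. For the base case $k=1$, one has $d(X_1^\flat) = d\iota_{X_1}\mu = L_{X_1}\mu = \Div(X_1)\mu$, so $\delta(X_1) = \Div(X_1)$, which matches the claimed formula (the bracket sum is empty and the divergence sum contains a single term with sign $(-1)^{1+1}=1$).

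For the inductive step, write $A_k := X_1 \wedge \cdots \wedge X_k$. Using the identity $\iota_{A\wedge B}\mu = \iota_B\iota_A\mu$ one gets $A_k^\flat = \iota_{X_k}A_{k-1}^\flat$, and Cartan's formula yields
\[
d(A_k^\flat) = L_{X_k}(A_{k-1}^\flat) - \iota_{X_k}\,d(A_{k-1}^\flat).
\]
Iterated application of $[L_{X_k}, \iota_{X_i}] = \iota_{[X_k, X_i]}$ together with $L_{X_k}\mu = \Div(X_k)\mu$ rewrites the first summand as
\[
\sum_{i=1}^{k-1}(-1)^{i}\bigl([X_i,X_k]\wedge X_1\wedge\cdots\hat{X_i}\cdots\wedge X_{k-1}\bigr)^\flat + \Div(X_k)\,A_{k-1}^\flat,
\]
where the sign $(-1)^i$ arises from pulling the bracket $[X_k,X_i]$ from the $i$-th slot to the front (cost $(-1)^{i-1}$) combined with the antisymmetry $[X_k,X_i] = -[X_i,X_k]$. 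For the second summand, the inductive hypothesis gives a formula for $\delta(A_{k-1})$, and $\iota_{X_k}(B^\flat) = (B\wedge X_k)^\flat$ shows that applying $\iota_{X_k}$ amounts to wedging each term of $\delta(A_{k-1})$ on the right by $X_k$.

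Subtracting the two contributions and applying $\sharp$ produces the claimed formula for $\delta(A_k)$: the summands from $L_{X_k}$ account precisely for the $j=k$ terms in the bracket sum (with sign $(-1)^i = (-1)^{k+i+k}$), while those from the inductive step account for the $j<k$ terms (with sign $-(-1)^{k-1+i+j} = (-1)^{k+i+j}$); similarly, $\Div(X_k)A_{k-1}$ provides the $i=k$ divergence term and the inductive step supplies the $i<k$ ones. The main obstacle is purely combinatorial: tracking (i) the $(-1)^{i-1}$ incurred when pulling each bracket to the front of the wedge, (ii) the extra sign from antisymmetry of the Lie bracket, and (iii) the parity shift from $(-1)^{k-1}$ (inductive level) to $(-1)^k$ (current level). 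Once these are accounted for, all signs line up with the stated expression.
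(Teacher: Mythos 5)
Your proof is correct and takes essentially the same route as the paper's: induction on $k$ via Cartan's formula $d\iota_{X_k} = L_{X_k} - \iota_{X_k}d$ applied to $\iota_{X_1\wedge\cdots\wedge X_{k-1}}\mu$, combined with $[L_{X_k},\iota_{X_i}] = \iota_{[X_k,X_i]}$ and $L_{X_k}\mu = \Div(X_k)\mu$. Your sign bookkeeping (the $(-1)^{i-1}$ from pulling brackets to the front, the antisymmetry of the Lie bracket, and the parity shift from level $k-1$ to level $k$) checks out against the stated formula.
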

\begin{proof}
The case $k=1$ is the definition of divergence. The case $k$ follows from $k-1$ using \[d \iota_{X_k} (\iota_{X_{1}\wedge \cdots \wedge X_{k-1}}\mu) = 
L_{X_k} \iota_{X_{1} \wedge \cdots \wedge X_{k-1}}\mu - \iota_{X_k} d(\iota_{X_{1} \wedge \cdots \wedge X_{k-1}}\mu)\]
and 
$
L_{X_k} \iota_{X_{1} \wedge \cdots \wedge X_{k-1}}\mu = 
\Div(X_k)\iota_{X_{1} \wedge \cdots \wedge X_{k-1}} \mu - \sum_{j=1}^{k-1}\iota_{X_{1} \wedge \ldots \wedge [X_j, X_k]\wedge  \cdots \wedge X_{k-1}}\mu .
$
\end{proof}

\begin{Corollary}\label{Corollary:deltaBivector}
If $\alpha = \iota_{X_1\wedge X_2}\mu$, then $X_{\alpha} = \delta(X_1\wedge X_2)$ is given by
\begin{align}\label{eq:xalp}
     X_{\alpha} = \Div(X_2)X_1 - \Div(X_1)X_2 - [X_1,X_2].
\end{align}
 \end{Corollary}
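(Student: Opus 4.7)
The plan is simply to apply the preceding Proposition in the special case $k=2$ and then unpack the signs.

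First I would note that by the defining identifications $A^\flat = \iota_A \mu$ and $\delta(A) = d(A^\flat)^\sharp$, the vector field $X_\alpha$ associated to $\alpha = \iota_{X_1 \wedge X_2}\mu = (X_1 \wedge X_2)^\flat$ is characterized by $\iota_{X_\alpha}\mu = d\alpha$, which translates precisely to $X_\alpha = \delta(X_1 \wedge X_2)$. So the corollary is reduced to computing $\delta(X_1 \wedge X_2)$ via the Proposition.

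Second, I would substitute $k=2$ into the displayed formula. The first sum has only the term $(i,j)=(1,2)$, contributing $(-1)^{2+1+2}[X_1,X_2] = -[X_1,X_2]$ (the empty wedge is the scalar $1$). The second sum contributes $(-1)^{2+1}\Div(X_1)\,X_2 + (-1)^{2+2}\Div(X_2)\,X_1 = -\Div(X_1)X_2 + \Div(X_2)X_1$. Adding the two gives
\[
\delta(X_1 \wedge X_2) = \Div(X_2)X_1 - \Div(X_1)X_2 - [X_1,X_2],
\]
which is exactly the claimed formula.

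There is really no main obstacle here: once the Proposition is in hand, the corollary is a one-line sign-tracking exercise. The only point that deserves care is confirming that the identification $X_\alpha = \delta(X_1 \wedge X_2)$ matches the sign convention $\iota_{A \wedge B}\mu = \iota_B \iota_A \mu$ fixed at the top of the appendix; with that convention the $\flat$-$\sharp$ correspondence is an isomorphism of graded vector spaces intertwining $d$ and $\delta$, and the $k=2$ case of the Proposition then yields the formula verbatim.
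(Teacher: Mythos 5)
Your proposal is correct and matches the paper's (implicit) derivation: the corollary is exactly the $k=2$ specialization of the preceding Proposition, and your sign bookkeeping — $(-1)^{2+1+2}=-1$ for the bracket term, $(-1)^{2+1}$ and $(-1)^{2+2}$ for the divergence terms — together with the identification $\iota_{X_\alpha}\mu = d\alpha = \iota_{\delta(X_1\wedge X_2)}\mu$ gives the stated formula. Nothing further is needed.
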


Similarly, the Leibniz bracket on $\Gamma(\wedge^2TM)$ takes the following form.  
\begin{Proposition}\label{Prop:bracketbivectors}
The Leibniz bracket on $\Gamma(\wedge^2TM)$ is given by 
\begin{equation*}
[X_1\wedge X_2, Y_1 \wedge Y_2] = [\delta(X_1 \wedge X_2), Y_1]\wedge Y_2 + Y_1 \wedge [\delta(X_1 \wedge X_2), Y_2],
\end{equation*}
or, equivalently, by
\begin{eqnarray*}
[X_1\wedge X_2, Y_1 \wedge Y_2] 
&=&- [[X_1,X_2], Y_1]\wedge Y_2 - Y_1 \wedge [[X_1,X_2], Y_2]\\
& & - \Div(X_1) [X_2,Y_1]\wedge Y_2 - \Div(X_1) Y_1 \wedge [X_2,Y_2]\\
& & + (L_{Y_1}\Div(X_1)) X_2 \wedge Y_2 + (L_{Y_2}\Div(X_1))Y_1 \wedge X_2\\
& & + \Div(X_2) [X_1,Y_1]\wedge Y_2 + \Div(X_2) Y_1 \wedge [X_1,Y_2]\\
& & - (L_{Y_1}\Div(X_2)) X_1 \wedge Y_2 - (L_{Y_2}\Div(X_2)) Y_1 \wedge X_1.
\end{eqnarray*}
\end{Proposition}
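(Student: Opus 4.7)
The plan is to deduce both formulas directly from the definition of the Leibniz bracket on $\Omega^{n-2}(M)$ via the identification $\iota_A \mu \leftrightarrow A$. By definition of $\delta$, writing $\alpha = (X_1 \wedge X_2)^\flat$ gives $X_\alpha = \delta(X_1 \wedge X_2)$; and because $d\alpha$ is exact, $\iota_{X_\alpha} \mu = d\alpha$ is closed, so $L_{X_\alpha}\mu = 0$, i.e.\ $X_\alpha$ is divergence-free. This is the key observation that will simplify all subsequent formulas.

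For the first identity, I would compute
\[
[X_1 \wedge X_2, Y_1 \wedge Y_2]^\flat = L_{X_\alpha}\iota_{Y_1 \wedge Y_2}\mu
\]
using the general Cartan-type identity $L_X \iota_A \mu = \iota_{[X,A]_{\mathrm{SN}}}\mu + \iota_A L_X \mu$ for the Schouten--Nijenhuis bracket. Since $L_{X_\alpha}\mu = 0$, only the first term survives, and the Schouten--Nijenhuis bracket of a vector field with a bivector decomposes by the Leibniz rule as $[X_\alpha, Y_1] \wedge Y_2 + Y_1 \wedge [X_\alpha, Y_2]$. Applying $(\,\cdot\,)^\sharp$ then yields the claimed formula.

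For the second identity I would just substitute the explicit expression $\delta(X_1 \wedge X_2) = \Div(X_2) X_1 - \Div(X_1) X_2 - [X_1, X_2]$ from Corollary~\ref{Corollary:deltaBivector} into the first formula. Expanding each commutator using the standard Leibniz rule $[fX, Y] = f[X,Y] - (L_Y f)\,X$ produces ten terms (five in $[\delta(X_1 \wedge X_2), Y_1]\wedge Y_2$ and five in $Y_1 \wedge [\delta(X_1 \wedge X_2), Y_2]$). Matching these against the ten summands in the stated formula is then a direct check, with every sign coming from a single application of the Leibniz rule.

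The only subtle point is sign bookkeeping, especially the sign in the $-(L_Y f) X$ term and the placement of $X_1, X_2$ versus $Y_1, Y_2$ in the wedge products; otherwise the proof is purely mechanical, and no new ideas beyond Corollary~\ref{Corollary:deltaBivector} and the divergence-free property of $X_\alpha$ are required.
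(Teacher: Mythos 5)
Your proposal is correct and follows essentially the same route as the paper: both exploit that $X_\alpha=\delta(X_1\wedge X_2)$ is divergence-free so that $L_{X_\alpha}\mu=0$, derive the first formula from the commutation of $L_{X_\alpha}$ with the contractions (your Schouten--Nijenhuis identity is exactly the paper's twofold application of $L_{X}\iota_{Y_j}=\iota_{[X,Y_j]}+\iota_{Y_j}L_{X}$), and then obtain the second formula by substituting Corollary~\ref{Corollary:deltaBivector} and expanding with $[fX,Y]=f[X,Y]-(L_Yf)X$. The ten-term bookkeeping you describe matches the stated expression, so no gap remains.
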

\begin{proof}
If $\alpha = (X_1 \wedge X_2)^{\flat}$ and $\beta = (Y_1 \wedge Y_2)^{\flat}$, then 
$[\alpha, \beta] = L_{X_{\alpha}}\beta = L_{X_{\alpha}} \iota_{Y_2}\iota_{Y_1}\mu$.
Using $L_{X_{\alpha}} \iota_{Y_{j}} = \iota_{[X_{\alpha}, Y_{j}]} + \iota_{Y_{j}}L_{X_{\alpha}}$ for $j\in \{1,2\}$ and $L_{X_{\alpha}} \mu = 0$, we find 
\[
	[\alpha, \beta] = \iota_{[X_{\alpha}, Y_1]\wedge Y_2 + Y_1 \wedge [X_{\alpha}, Y_2]}\mu.
\]
Substituting \eqref{eq:xalp} then yields the required result.
\end{proof}
In particular, if $\alpha = \iota_{X_1 \wedge X_2}\mu$ and $\beta = \iota_{Y_1 \wedge Y_2}\mu$, then $[\alpha,\beta]$ is obtained 
by inserting $[X_1\wedge X_2, Y_1 \wedge Y_2]$ into~$\mu$.

\section{A compactly supported Poincaré Lemma with parameters}\label{sec:appendixParameterPoincare}

In the proof of Lemma \ref{lem:cubesquares}, we needed a version of the compactly supported Poincaré Lemma with additional parameters. While the existence of such a Poincaré Lemma is intuitively clear, we could not find any reference with a compactly supported version, so we provide a proof here for the sake of completeness. We supply here an elementary geometric proof for a cube, an alternative approach would be to construct a parametrized version of the support-preserving Poincaré Lemma of \cite{Bogovskii1980SolutionOS} (cf. also \cite{nutzi2024supportpreservinghomotopyrham}).

Let $M,N$ be manifolds. We consider $X=M\times N$ with the foliation $F=TM\times 0_N$. The foliated longitudinal forms $\Omega^{\bullet,0}(X)=\Gamma(X,\Lambda^\bullet F^*)$ are exactly the complex of differential forms along $M$, and their differential is the de Rham differential $d_M$ in $M$ direction, with $N$ being treated as a parameter. We denote its cohomology $H^{\bullet,0}(X)=H^{\bullet,0}(M\times N)$. 
Alternatively we could see $F$ as a Lie algebroid (with the inclusion as the anchor) and the above cohomology is just the Chevalley-Eilenberg cohomology of this Lie algebroid. When $M$  admits a finite good open cover, the K\"unneth theorem for Lie algebroids (\cite[Theorem 6.6]{JotzMarchesini2024}) implies:
\begin{align*}
    H^{\bullet,0}(M\times N)=H^\bullet_{dR}(M)\otimes C^\infty(N).
\end{align*}
In the context of foliations, this statement goes back at least to \cite{ElKacimiAlaoui1983} (cf. also \cite{Bertelson2011}). This allows us to prove the following:
\begin{Lemma}\label{lem:PoincParam}
  Let  us consider a precompact cube $U\times V\subset \mathbb R^k\times \mathbb R^{n}$ for $k\geq 2$ and $n\geq 1$.   Given a  form $\alpha\in\Omega^{k,0}(U\times V)$ such that
\begin{itemize}
    \item $supp(\alpha)\subset U \times V$ is compact
    \item $\int_U\alpha=0$ as an element of $C^\infty(V)$,
\end{itemize} 

there exists a compactly supported form $\delta\in \Omega^{k-1,0}(U\times V)$ with $d_M\delta=\alpha$.
\end{Lemma}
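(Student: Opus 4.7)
The natural approach is induction on $k$, doing all constructions with the parameters in $V$ simply carried along. Concretely, I would write $U = U' \times (a,b) \subset \mathbb{R}^{k-1} \times \mathbb{R}$, so the longitudinal coordinates are $x = (x', x_k)$, and $\alpha = f(x', x_k, y)\, dx_1 \wedge \cdots \wedge dx_k$ for some $f \in C^\infty_c(U \times V)$ with $\int_U \alpha(\cdot, y) = 0$ for every $y \in V$. The plan is to split $\alpha$ into a piece that is a ``product along $x_k$'' plus a piece whose $x_k$-integral vanishes identically, handling them by the inductive hypothesis and by direct integration, respectively.

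The base case is $k=1$: set $F(x_1,y) := \int_a^{x_1} f(s,y)\, ds$. Because $\int_a^b f(\cdot,y)\, ds = 0$ and $f$ is compactly supported in $U \times V$, the function $F$ is smooth, compactly supported in $U \times V$, and satisfies $d_M F = \alpha$. For the inductive step $k \geq 2$, fix a bump $\phi \in C^\infty_c((a,b))$ with $\int \phi = 1$, and set
\begin{equation*}
  g(x',y) := \int_a^b f(x', x_k, y)\, dx_k .
\end{equation*}
Then $g \in C^\infty_c(U' \times V)$ and $\int_{U'} g(\cdot, y)\, dx' = 0$ by Fubini. The inductive hypothesis (applied on $U' \times V \subset \mathbb{R}^{k-1} \times \mathbb{R}^n$) gives $\eta \in \Omega^{k-2,0}(U' \times V)$ with compact support such that $d_{M'}\eta = g\, dx_1 \wedge \cdots \wedge dx_{k-1}$; pulled back to $U \times V$, it satisfies $d_M(\eta \wedge \phi(x_k)\, dx_k) = g(x', y)\, \phi(x_k)\, dx_1 \wedge \cdots \wedge dx_k$. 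The residual form $h := f - g\phi$ now has $\int_a^b h(x', x_k, y)\, dx_k = 0$ for all $(x', y)$, so $H(x,y) := \int_a^{x_k} h(x', t, y)\, dt$ is smooth and compactly supported in $U \times V$, and a direct computation gives $d_M\bigl((-1)^{k-1} H\, dx_1 \wedge \cdots \wedge dx_{k-1}\bigr) = h\, dx_1 \wedge \cdots \wedge dx_k$. Adding the two primitives yields the required compactly supported $\delta$.

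The only real thing to check along the way is that compact support genuinely propagates through the construction: the $V$-support is inherited trivially from $\alpha$ at each stage, while the $U$-support of $H$ uses precisely the fibrewise vanishing $\int h\, dx_k \equiv 0$, and the $U$-support of $\eta$ is granted by the induction hypothesis via $\int g\, dx' \equiv 0$. Smoothness in the parameter $y$ is automatic since all operations (multiplication by $\phi$, fibrewise integration, and the inductive primitive) preserve $C^\infty$-dependence on $y$. No serious obstacle arises; the argument is the standard compactly supported Poincaré lemma on a cube with the parameter $y \in V$ treated as an inert spectator.
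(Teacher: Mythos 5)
Your proof is correct, and it takes a genuinely different route from the one in the paper. You run the classical Bott--Tu induction for the compactly supported Poincar\'e lemma in top degree, splitting $U$ as $U'\times(a,b)$, peeling off the fibre integral $g=\int_a^b f\,dx_k$ (handled by the inductive hypothesis in $k-1$ longitudinal variables), and integrating the zero-mean residual $h=f-g\phi$ explicitly in $x_k$; the parameter $y\in V$ is carried along inertly, and your support bookkeeping (compact support of $F$ and $H$ forced by the vanishing of the full fibre integral, $V$-support inherited from $\alpha$) is exactly the point that needs checking and is checked correctly. The paper instead compactifies $\mathbb{R}^k$ to $S^k$, invokes a K\"unneth theorem for longitudinal (Lie algebroid) cohomology to get $H^{\bullet,0}(S^k\times V)=H^\bullet_{\dR}(S^k)\otimes C^\infty(V)$, produces a primitive on $S^k\times V$, and then corrects it by a cutoff argument on the complement of a smaller cube to regain compact support. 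Your argument is more elementary and self-contained --- it needs nothing beyond Fubini and differentiation under the integral sign, and it yields an explicit primitive by iterated fibre integration --- whereas the paper's argument is shorter modulo the external K\"unneth input and is the kind of reasoning that generalises to other foliated settings. One small remark: your induction bottoms out at $k=1$, a case not covered by the statement of the lemma (which assumes $k\geq 2$), but you prove that base case directly and correctly, so there is no gap.
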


\begin{proof}
We start by considering a slightly smaller cube $U''\times V'$ with $supp(\alpha)\subset U''\times V'$ such that $U''\subset U'\subset U$ and $V'\subset V$ are relatively compact sets. At the same time we see $\mathbb R^k$ as $S^k- N$, where $N$ is the north pole of the $k$-sphere. 
\begin{itemize}
\item The form $\alpha$ extends by zero to a form $\tilde \alpha\in \Omega^{k,0}(S^k\times V)$. The class of $\tilde \alpha$ is trivial since $\int_{S^k}\tilde\alpha=\int_U\alpha=0$. (Here we implicitly use  $H^{\bullet,0}(S^k\times V)=C^\infty(V)$ as follows from the above K\"unneth theorem.) Let $\beta\in \Omega^{k-1,0}(S^k\times V)$ be a primitive of $\tilde \alpha$.
\item Let  $\tilde\beta=\beta|_{S^k \backslash \overline{U''}\times V}$. We have $d_M\beta=\tilde\alpha|_{S^k \backslash \overline{U''}\times V}=0$. Since $H^{k-1,0}(S^k \backslash \overline{U''}\times V)=H^{k-1}(S^k \backslash \overline{U''})\otimes C^\infty(V)=0$, the form $\tilde\beta$ has a $d_M$-potential $\gamma\in \Omega^{k-2,0}(S^k \backslash\overline{ U''}\times V)$.

\item  Let $\rho_1\in C^{\infty}_c(S^k\backslash\overline{ U''})$ have compact support and be constantly 1 on $S^k\backslash U'$. Similarly let $\rho_2\in C^{\infty}_c(V)$ have compact support and be constantly 1 on a neighborhood of $V'$. We set $\delta=\rho_2\cdot(\beta-d_M(\rho_1\gamma))$.
\end{itemize}
By construction $d_M\delta=\rho_2d(\beta-d_M(\rho_1\gamma))=\rho_2\tilde \alpha=\tilde \alpha$, so we only have to understand why $\delta$ is compactly supported in $U\times V$. The term $(\beta-d_M(\rho_1\gamma))$ is supported in $U'\times V$ (since we have $(d_M\rho_1\gamma)|_{S^k\backslash \overline{U'}}=\beta|_{S^k\backslash \overline{U'}}$). Consequently $\delta$ is supported in $U'\times V'$ which is precompact in $U\times V$.
\end{proof}

\section{A Poincaré Lemma for differential operators}\label{Appendix:PoincareLemma}

\label{appendix:poincare}
In this section we will establish a Poincaré type Lemma for differential operators, which we need for the proof of Theorem \ref{thm:local}. First, we briefly recall Peetre's theorem for support-decreasing linear operators. 
For partial derivatives, we will use the notation 
$\partial_{\vec{\sigma}}f := (\frac{\partial}{\partial x_1})^{\sigma_1} \cdots (\frac{\partial}{\partial x_n})^{\sigma_n}f$
with $\vec{\sigma} = (\sigma_1, \ldots, \sigma_n) \in \N^{n}$ as usual.

\begin{Theorem}[Peetre \cite{peetreRectificationArticleCaracterisation1960}] \label{thm:peetre} Let $E,F$ be vector bundles over $M$ and let $P:\Gamma_c(E)\to \Gamma_c(F)'$ be a support-decreasing linear map. Then there exists a discrete set $\Lambda\subset M$ such that $P|_{M\backslash \Lambda}$ is continuous. 
Moreover, the restriction of $P$ to $M\setminus \Lambda$ is a differential operator of locally finite order: for any $p\in M\setminus \Lambda$, there 
exists a chart $(U,x)$ and frame $\{e_i\}$ of $E_U$ and finitely many nonzero distributions $T^{\vec{\sigma}}_{i}\in \Gamma_c(F|_U)'$
such that 
\[
P(s) =\sum_{i=1}^{\mathrm{rank}(E)} \sum_{\vec{\sigma} \in \mathbb{N}^{n}}
\left(\partial_{\vec{\sigma}} s^{i}\right) 
T^{\vec{\sigma}}_{i}
\]
for all $s=\sum s^ie_i \in \Gamma_{c}(E|_{U})$. 
\end{Theorem}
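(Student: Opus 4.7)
The statement is essentially a bundle-valued, distribution-target version of Peetre's classical theorem (\cite{peetreRectificationArticleCaracterisation1960}), so the plan is to reduce it to the scalar, function-to-function case and then invoke (or replicate) the standard argument. First I would localize: pick a point $p \in M$, choose coordinates $(U, x)$ around $p$ with $U \cong \mathbb{R}^n$ together with local frames $\{e_i\}$ of $E|_U$ and $\{f_j\}$ of $F|_U$. A section $s \in \Gamma_c(E|_U)$ is then encoded by its components $s^i \in C^\infty_c(U)$, and a distribution $T \in \Gamma_c(F|_U)'$ by the components $T_j \in C^\infty_c(U)'$ obtained from pairing with the dual frame. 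In these coordinates $P$ becomes a family of linear maps $P_{ij} \colon C^\infty_c(U) \to C^\infty_c(U)'$, each of which is support-decreasing because $P$ is. It therefore suffices to prove the theorem for a single support-decreasing linear map $Q \colon C^\infty_c(\mathbb{R}^n) \to C^\infty_c(\mathbb{R}^n)'$.

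The core of the argument is Peetre's dichotomy: at each $p \in M$, either there exists a neighborhood $V \ni p$ and an integer $k$ such that $Q(s)|_V$ depends only on the $k$-jet of $s|_V$ for every $s \in C^\infty_c(V)$ (the \emph{locally of finite order} case), or every neighborhood of $p$ contains a compactly supported function whose $k$-jet at $p$ vanishes for arbitrarily large $k$, yet is still sent by $Q$ to a distribution with nonzero mass near $p$. The classical construction, which I would reproduce, manufactures in the second case a sequence of test functions $\varphi_m$ with disjoint supports shrinking towards $p$ whose images $Q(\varphi_m)$ pair nontrivially with fixed test sections $\psi_m$; summing these then produces a single compactly supported $\varphi = \sum c_m \varphi_m$ on which $Q$ fails to be even well-defined as a distribution, because the support-decreasing property forces $Q(\varphi)$ to contain each $Q(c_m \varphi_m)$ as a local piece. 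Hence $Q$ must be locally of finite order away from the exceptional set
\[
\Lambda := \{p \in M \,;\, Q \text{ is not of finite order on any neighborhood of } p\}.
\]
The discreteness of $\Lambda$ comes from the same mechanism: if two such bad points accumulated, one could chain together the sequences $\varphi_m$ attached to different accumulation points into one global test function producing a contradiction.

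Once locally finite order is established on $M \setminus \Lambda$, the representation
\[
P(s) = \sum_{i} \sum_{\vec\sigma \in \mathbb{N}^n} (\partial_{\vec\sigma} s^i) \, T^{\vec\sigma}_i
\]
is obtained by a standard Hahn--Banach / density argument: for each multi-index $\vec\sigma$ of order at most the local order, set $T^{\vec\sigma}_i(\psi) := \frac{1}{\vec\sigma!}P(x^{\vec\sigma} e_i \chi)(\psi)$ for $\chi$ a cutoff equal to $1$ on $\mathrm{supp}(\psi)$; this is independent of $\chi$ by support-decreasingness, and agreeing with $P$ on polynomials forces agreement on all smooth sections by Borel's theorem combined with the finite-order property. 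Continuity of $P$ on $M \setminus \Lambda$ follows automatically from the finite-order local expression, since each $T^{\vec\sigma}_i$ is a distribution and a locally finite sum of continuous operators is continuous.

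The main obstacle is genuinely the first step — executing Peetre's contradiction argument carefully in the distributional target setting, where the usual formulation assumes the target is $C^\infty$. The adjustment is that one must test $Q(\varphi)$ against a compactly supported section rather than evaluate it at a point, but the combinatorial skeleton of Peetre's construction (disjoint supports, rapidly decreasing coefficients $c_m$ beating any prescribed sequence of seminorm bounds) adapts verbatim, since only the \emph{existence} of a failure pairing is needed to derive the contradiction.
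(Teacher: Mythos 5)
Your reduction is exactly the paper's: localize over a trivializing chart, encode $P$ by the component maps $P_{ij}(f)(g) := P(f e_i)(g f_j)$, note that each is a support-decreasing map $C^\infty_c(V)\to C^\infty_c(V)'$, and reassemble the resulting local expansions; the paper then simply cites Peetre's rectification article for the scalar statement (discrete exceptional set included), whereas you additionally sketch its proof. Two small points on that extra material. First, the coefficient formula $T^{\vec{\sigma}}_i(\psi) := \frac{1}{\vec{\sigma}!}P(x^{\vec{\sigma}}e_i\chi)(\psi)$ is not correct as written: once the finite-order expansion is known to exist one has
\[
P\big(x^{\vec{\tau}}e_i\chi\big)(\psi) \;=\; \sum_{\vec{\sigma}\le\vec{\tau}}\frac{\vec{\tau}!}{(\vec{\tau}-\vec{\sigma})!}\,T^{\vec{\sigma}}_{i}\big(x^{\vec{\tau}-\vec{\sigma}}\psi\big),
\]
so the $T^{\vec{\sigma}}_i$ are recovered by inverting this triangular system (equivalently, by testing on monomials centred at the relevant point), not read off directly. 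Second, for the global statement you should note explicitly that the exceptional set $\Lambda=\bigcup_{V,i,j}\Lambda^{V}_{ij}$ remains discrete because each chart contributes only finitely many discrete sets and the cover is locally finite; the paper records this step. Neither point affects the overall correctness of your approach.
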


The original result \cite{peetreRectificationArticleCaracterisation1960} was stated for open subsets of $\R^n$ and for trivial line bundles, 
but the above version easily reduces to this because the statement is local. In detail:
\begin{proof}
Let $\mathcal{V}$ be a locally finite cover of $M$ such that $E$ and $F$ trivialise over every $V\in \mathcal{V}$.
Let $e_i$ and $f_j$ be the corresponding $C^{\infty}(V)$-bases of $\Gamma(E|_{V})$ 
 and $\Gamma(F|_{V})$, and write $s = \sum_i s^i e_i$ and $t = \sum_{j} t^j e_{j}$ for sections of $E|_{V}$ and $F|_{V}$, respectively.
 Then $P_{ij}(f)(g) := P(fe_i)(gf_j)$ is a support-decreasing linear map $P_{ij} \colon C_{c}^{\infty}(V) \rightarrow C^{\infty}_{c}(V)'$. By the original result
 \cite{peetreRectificationArticleCaracterisation1960}, there exists a discrete set $\Lambda^{V}_{ij}\subseteq V$ such that 
 $P_{ij}$ is continuous on $V\setminus \Lambda^{V}_{ij}$, and every $p\in V\setminus \Lambda^{V}_{ij}$ admits 
 a neighbourhood $U_{ij}$ such that 
 \[P_{ij}(f)(g) = \sum_{\vec{\sigma}\in \N^{n}} \left(\partial_{\vec{\sigma}}f\right) T^{\vec{\sigma}}_{ij}(g)\] 
 for all $f, g \in C^{\infty}_{c}(U_{ij}\setminus \Lambda^{V}_{ij})$. Then $P$ is continuous on $M\setminus \Lambda$ for 
 the discrete set $\Lambda = \bigcup \Lambda^{V}_{ij}$. For $p\in M\setminus \Lambda$ we can find a coordinate neighbourhood $U = \bigcap U_{ij}$ such that
 with $P(s)(t) = \sum_{ij}P_{ij}(s^i)(t^j)$ for all $s\in \Gamma_{c}(E|_{U})$ and $t\in \Gamma_{c}(F|_{U})$, and the result follows with 
 $T^{\vec{\sigma}}_{i}(t) = \sum_{j}T^{\vec{\sigma}}_{ij}(t^{j})$ for $t \in \Gamma_{c}(F|_{U})$.
\end{proof}

The above characterization allows us to formulate a coordinate-free description of (distribution-valued) differential operators:

\begin{Definition} 
We call a support-decreasing linear map $P:\Gamma_c(E)\to \Gamma_c(F)'$ a differential operator (of locally finite order), if $\Lambda=\emptyset$, i.e. if  $P$ is continuous.
\end{Definition}

Any support-decreasing continuous operator $P:\Gamma(E)\to \Gamma_c(F)'$ induces a differential operator by restriction to $\Gamma_c(E)\subset \Gamma(E)$. However for differential operators the opposite is also true:

\begin{Lemma}\label{lem:sheaf} Let $P:\Gamma_c(E)\to \Gamma_c(F)'$ be a differential operator. Then $P$ induces a continuous morphism of sheaves $\mathcal E_E\to \mathcal D_F'$, where $\mathcal E_E(U)=\Gamma(E|_U)$ and $\mathcal D_F'(U)=\Gamma_c(F|_U)'$. In particular it induces a continuous map $\Gamma(E)\to \Gamma_c(F)'$.
\end{Lemma}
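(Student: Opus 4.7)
The main idea is to define the extension by a cutoff construction using the support-decreasing property of $P$. For an open $U\subseteq M$, a section $s\in\Gamma(E|_U)$, and a test section $t\in\Gamma_c(F|_U)$ with compact support $K=\supp(t)\subseteq U$, choose $\chi\in C^\infty_c(U)$ such that $\chi\equiv 1$ on $K$. Then $\chi s$ (extended by zero outside $U$) lies in $\Gamma_c(E)$, and I will set
\[
P_U(s)(t) := P(\chi s)(t).
\]
The central lemma is that this is independent of $\chi$: if $\chi,\chi'$ are two such cutoffs, then $(\chi-\chi')s$ vanishes on $K$, hence its support is disjoint from $K$; since $P$ is support-decreasing, $\supp P((\chi-\chi')s)\cap K=\emptyset$, so $P((\chi-\chi')s)(t)=0$. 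Linearity in $t$ (and $s$) is immediate.

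Next I would verify that $P_U(s)$ is a continuous linear functional on $\Gamma_c(F|_U)$. By definition of the LF-topology, it suffices to check continuity on each subspace $\Gamma_K(F|_U)$ of sections supported in a fixed compact $K\subseteq U$. Fixing a single cutoff $\chi_K\in C^\infty_c(U)$ equal to $1$ on $K$, the formula $P_U(s)(t)=P(\chi_K s)(t)$ holds uniformly for all $t\in\Gamma_K(F|_U)$ by the well-definedness above, and the right-hand side is continuous in $t$ because $P(\chi_K s)\in\Gamma_c(F)'$ is a continuous functional and the inclusion $\Gamma_K(F|_U)\hookrightarrow\Gamma_c(F)$ is continuous.

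For the sheaf property, if $V\subseteq U$ is open and $t\in\Gamma_c(F|_V)$ has $\supp(t)=K\subseteq V$, I can simply take the cutoff $\chi$ to be supported in $V$; then $\chi s=\chi(s|_V)$, which shows $P_U(s)(t)=P_V(s|_V)(t)$, i.e. the family $\{P_U\}$ is compatible with restrictions. Continuity of $P_U\colon\Gamma(E|_U)\to\Gamma_c(F|_U)'$ in $s$ follows by the same fixed-compactum argument: for every compact $K\subseteq U$, the map $s\mapsto\chi_K s$ is continuous $\Gamma(E|_U)\to\Gamma_c(E)$ (multiplication by a fixed smooth compactly supported function is continuous between these Fréchet / LF-spaces), and composition with the continuous map $P\colon\Gamma_c(E)\to\Gamma_c(F)'$ followed by restriction to $\Gamma_K(F|_U)'$ gives the required continuity on each piece of the strict inductive limit.

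The only slightly subtle point—and the place where the support-decreasing hypothesis is essential—is the cutoff-independence step; beyond that, the proof is a matter of packaging. Taking $U=M$ and noting that $P_M$ agrees with $P$ on $\Gamma_c(E)$ (choose $\chi\equiv1$ on $\supp(s)$) gives the last sentence of the lemma.
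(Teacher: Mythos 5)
Your construction is correct and matches the paper's own proof: both define the extension by $P_U(s)(t)=P(\chi s)(t)$ for a cutoff $\chi$ adapted to $\mathrm{supp}(t)$, use the support-decreasing property to show independence of the cutoff, and deduce continuity from the strict inductive limit structure of $\Gamma_c(F|_U)=\varinjlim\Gamma_K(F|_U)$. One micro-remark: for the assertion ``the support of $(\chi-\chi')s$ is disjoint from $K$'' to be literally true you should take cutoffs equal to $1$ on a \emph{neighbourhood} of $K$ rather than on $K$ itself (otherwise the support can meet $\partial K$); with that harmless adjustment the argument is airtight.
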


\begin{proof}
This is stated in \cite{peetreRectificationArticleCaracterisation1960}, however we will provide here a short explanation. First of all for any $U\subset M$, $P$ can be restricted to a continuous operator
$$P_U:\Gamma_c(E|_U)\to \Gamma_c(F|_U)'$$
by the natural extension $\Gamma_c(E|_U)\to \Gamma_c(E)$ and restriction $\Gamma_c(F)'\to \Gamma_c(F|_U)'$. The resulting operator is still support-decreasing, hence we only need to show extendibility from $\Gamma_c(E|_U)$ to $\Gamma(E|_U)$. 

Given $s \in  \Gamma(E|_U)$ and compact $K \subseteq U$,
any $f_{K} \in C^{\infty}_{c}(U)$ with $f_K|_{K} = 1$ yields the same continuous linear functional 
$P_U(f_{K}s) \in \Gamma_K(F|_U)'$, where $\Gamma_K$ denotes sections with support in $K$.
This defines an element  in the continuous linear 
dual of the locally convex injective limit $\Gamma_{c}(F|_U) = \varinjlim \Gamma_{K}(F|_U)$, denoted again by $P_U(s)$. 
We get a support-decreasing operator
$P_U \colon \Gamma(E|_U) \rightarrow  \Gamma_c(F|_U)'$ that extends the originally given one. It is continuous because for every compact $K\subseteq M$, the composition of $P_U$ with the projection $\Gamma_{c}(F|_U)' \rightarrow \Gamma_{K}(F|_U)'$ is continuous.
\end{proof}
We can now turn to the Poincaré Lemma for differential operators:
\begin{Lemma}\label{Lemma:exactDO}
    Let $U\subset \mathbb R^n$ be connected and open, and let $E\to U$ be a vector bundle. For $k \geq 1$, let $D:\Omega^k_c(U)\to \mathcal E=\Gamma_c(E)'$ be a differential operator (of locally finite order) with $Dd=0$. 
 Then there exists a differential operator $Q:\Omega^{k+1}_c(U)\to\mathcal{E}$ such that $D=Qd$,
\[\begin{tikzcd}[column sep=large]
	\Omega^{k+1}_{c}(U)\arrow[dr, dashed, "Q"] & \\
	\Omega^{k+1}_{c}(U)\arrow[u, "d"]\arrow[r, "D"] & \Gamma_{c}(E)\\
	\Omega^{k-1}_{c}(U) \arrow[u, "d"]  \arrow[ur, "0"].&
\end{tikzcd}\]       
\end{Lemma}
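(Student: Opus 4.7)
My strategy is to extend $D$ to non-compactly supported forms using the sheaf structure of differential operators, invert $d$ at the symbol level via a Koszul-type homotopy on formal jets, and patch local constructions into a global $Q$.

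\emph{Extension.} By Lemma~\ref{lem:sheaf}, $D$ extends canonically to a continuous support-decreasing sheaf morphism $\tilde D\colon \Omega^k(\,\cdot\,)\to \Gamma_c(E|_{\,\cdot\,})'$. The hypothesis $D\circ d = 0$ upgrades to $\tilde D\circ d = 0$ on $\Omega^{k-1}(U)$ via a cutoff argument: for $\beta\in \Omega^{k-1}(U)$ and compact $K\subseteq U$, choose $f_K\in C^\infty_c(U)$ equal to $1$ on a neighbourhood of $K$, so that
\[
\tilde D(d\beta)|_K = D(d(f_K\beta))|_K - D(df_K\wedge\beta)|_K = 0,
\]
where the first term vanishes by hypothesis and the second by support-decrease, since $df_K$ vanishes on $K$.

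\emph{Local inversion via Koszul.} In a chart $(V, x_1,\ldots,x_n)\subseteq U$ where $E$ trivializes, Theorem~\ref{thm:peetre} writes $\tilde D(\alpha) = \sum_{I,\sigma} T^{I,\sigma}\,\partial^\sigma\alpha_I$ with locally finite distributional coefficients $T^{I,\sigma}\in \Gamma_c(E|_V)'$. On formal polynomial forms centred at any $x\in V$, the Koszul homotopy $h := \iota_{E_x}/m$ associated to the Euler vector field $E_x = \sum (y_i - x_i)\partial_{y_i}$ (acting on polynomial degree $m\geq 1$, zero on constants) satisfies $hd + dh = \mathrm{id} - \pi_0$, where $\pi_0$ projects onto the constant-coefficient part. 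Since $\tilde D$ is a differential operator, it factors through jets; composing its jet symbol with $h$ produces the symbol of a continuous, support-decreasing operator $Q_V\colon \Omega^{k+1}(V)\to \Gamma_c(E|_V)'$, with coefficients $S^{J,\tau}$ obtained from $T^{I,\sigma}$ by explicit combinatorics. Direct computation then gives
\[
Q_V \circ d \;=\; \tilde D\circ(\mathrm{id} - \pi_0 - dh) \;=\; \tilde D|_V - \tilde D\circ \pi_0,
\]
and the defect vanishes because, for $k\geq 1$, every constant-coefficient $k$-form $c\,dx^I$ equals $d(c\,x_{i_0}\,dx^{I\setminus\{i_0\}})$ with $i_0 = \min I$, hence lies in $d\Omega^{k-1}$, and $\tilde D$ annihilates exact forms. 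So $Q_V\circ d = \tilde D|_V$ on $\Omega^k(V)$.

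\emph{Gluing.} Cover $U$ by such charts $\{V_i\}$. On each overlap $V_i\cap V_j$ the difference $Q_{V_i} - Q_{V_j}$ is a differential operator satisfying $(Q_{V_i} - Q_{V_j})\circ d = 0$, so it factors through the cokernel $\Omega^{k+1}(V_i\cap V_j)/d\Omega^k(V_i\cap V_j)$. Using that $\Gamma_c(E|_{\,\cdot\,})'$ is a fine sheaf (it is a sheaf of $C^\infty$-modules), a partition-of-unity argument produces a \v{C}ech-cochain correction that assembles the $Q_{V_i}$'s into a global differential operator $Q\colon \Omega^{k+1}(U)\to \Gamma_c(E)'$ with $Q\circ d = \tilde D$. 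Restriction to $\Omega^{k+1}_c(U)$ yields $Q\circ d = D$ as required.

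\textbf{Main obstacle.} The Koszul homotopy depends on the choice of origin $x$, as the Euler vector field is not chart-independent, so the local operators $Q_{V_i}$ constructed this way do not automatically agree on overlaps. The heart of the proof lies in showing that the resulting \v{C}ech 1-cocycle $Q_{V_i} - Q_{V_j}$, which annihilates exact $(k+1)$-forms, is a coboundary in the appropriate sheaf of differential operators; this combines the fineness of $\mathcal{D}'_E$ with the local Poincaré property used in the symbol inversion above.
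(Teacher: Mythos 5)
Your core mechanism is the same as the paper's: contract with an Euler vector field, use Cartan's formula together with $\tilde D\circ d=0$ to invert $d$ degree by degree on polynomial coefficients, and kill the constant part because constant-coefficient $k$-forms are exact for $k\geq 1$. The extension step via Lemma~\ref{lem:sheaf} and the observation that $\tilde D$ annihilates all of $d\Omega^{k-1}(U)$ also match the paper (which gets this by density of $\Omega^{k-1}_c(U)$ rather than by a cutoff, but both work).

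The genuine gap is your gluing step, and it is a self-inflicted one. Since $U$ is already an open subset of $\R^n$, there is a single global coordinate system and a single global Euler vector field $E=\sum x^\mu\partial_\mu$; the paper runs the homotopy argument once, globally on $U$, by inductively defining $D_{\ell+1}=D_\ell+\tfrac{1}{k+\ell+1}[(D-D_\ell)\iota_E]^{\le\ell}d$ and $Q_{\ell+1}=Q_\ell+\tfrac{1}{k+\ell+1}[(D-D_\ell)\iota_E]^{\le\ell}$, so that no chart-by-chart construction and no \v{C}ech correction ever arise. By localizing the Koszul homotopy to charts with varying origins you create a 1-cocycle $Q_{V_i}-Q_{V_j}$ of operators annihilating exact forms, and you then assert — but do not prove — that it is a coboundary, explicitly flagging this as ``the heart of the proof.'' As stated this is a gap: the fineness you invoke is that of $\mathcal D'_E$, whereas the relevant object is the sheaf of continuous support-decreasing operators $\Omega^{k+1}(\cdot)\to\Gamma_c(E|_\cdot)'$ annihilating exact forms (which is indeed fine under post-multiplication by a partition of unity, so the argument could be completed, but the work is not done). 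Separately, the issue that the paper's truncations $[\cdot]^{\le\ell}$ and the stabilization argument on precompact sets actually address — namely that $D$ has only \emph{locally} finite order, so $D\iota_E$ rescaled by polynomial degree is not a single finite-order operator — is compressed in your write-up into ``explicit combinatorics'' at the jet level; on precompact charts this is harmless, but in the global, chart-free version it is precisely the point that needs the inductive truncation and the observation that $Q_{\ell+i}|_V=Q_\ell|_V$ stabilizes on each $V\Subset U$.
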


\begin{proof}
We start by noting that $D$ uniquely extends from $\Omega^k_c(U)$ to $\Omega^k(U)$ by Lemma \ref{lem:sheaf}. We will work with this extension and note that it vanishes on all exact forms $d\Omega^{k-1}(U)$, since $\Omega^{k-1}_c(U)$ is dense in $\Omega^{k-1}(U)$.

Let $\Omega^k_{\ell}(U)$ denote the space of differential $k$-forms with polynomial coefficients of degree $\ell$ and  $\Omega^k_{\le\ell}(U)$ the space of differential $k$-forms with polynomial coefficients of degree $\le\ell$. We construct inductively differential operators $D_\ell:\Omega^{k}(U)\to \mathcal E$ of order $\ell$ and $Q_\ell:\Omega^{k+1}(U)\to \mathcal E$ of order $\ell-1$  such that 
    \begin{enumerate}
    \item $D_\ell=Q_\ell d$ 
    \item $D-D_\ell$ vanishes on $\Omega^k_{\le\ell}(U)$.
\end{enumerate}
We start with $D_0=0$ and correspondingly $Q_0=0$, because $D$ vanishes on differential $k$-forms with constant coefficients: $\Omega^k_{\le 0}(U)=\Omega^k_0(V)\subseteq d\Omega^{k-1}_1(U)$. 

Let $E = \sum_{\mu=1}^{n}x^\mu\partial_{\mu}$ be the Euler vector field.
Given a differential operator $D_\ell$ of order $\ell$ which satisfies 1.\ and 2., we define the following differential operators of order $\ell+1$:
    \begin{align*}
        D_{\ell+1}&:=D_{\ell}+\frac{1}{k+\ell+1}[(D-D_\ell)\iota_E]^{\le\ell}d.\\
        Q_{\ell+1}&:=Q_\ell+\frac{1}{k+\ell+1}[(D-D_\ell)\iota_E]^{\le\ell}.
    \end{align*}
Here $[A]^{\leq \ell}$ denotes the part of the differential operator $A$ which is of degree at most $\ell$.\footnote{This part can be extracted from the original differential operator coefficient by coefficient, by evaluating the operator on all monomials of degree $\leq l$.} This notion is not coordinate-invariant, but that does not cause any problem for the proof, since we work in a fixed coordinate system. By construction and induction hypothesis, Property 1., i.e. $D_{\ell+1}=Q_{\ell+1}d$, is satisfied. 
    
Let us verify Property 2. If $\alpha\in\Omega^{k}_{\le\ell+1}(U)$, then $d\alpha\in\Omega^{k+1}_{\le\ell}(U)$ and $\frac{1}{k+\ell+1}[(D-D_\ell)\iota_E]^{>\ell}d\alpha=0$. This means that $[(D-D_\ell)\iota_E]^{\le\ell}d\alpha$ is equal to $(D-D_\ell)\iota_E d\alpha$. Thus: 
$$(D-D_{\ell+1})\alpha=(D-D_\ell)\alpha-\frac{1}{k+\ell+1}(D-D_\ell)\iota_Ed\alpha=(D-D_\ell)\alpha-\frac{1}{k+\ell+1}(D-D_\ell)L_E\alpha,$$
where in the last equality we use the fact that $(D-D_\ell)d=Dd-D_{\ell}d=0$. Now Property 2 holds:
\begin{itemize}
	\item for $\alpha\in\Omega^k_{\ell+1}(U)$ because 
	$L_E\alpha=(k+\ell+1)\alpha$,
	\item and for $\alpha\in\Omega^k_{\le\ell}(U)$ because $L_E\alpha\in\Omega^k_{\le\ell}(U)$ and $D-D_\ell$ vanishes on the subspace $\Omega^k_{\le\ell}(U)$. 
\end{itemize} 

On any precompact open set $V\Subset U$ the order of $D$ is bounded by some $\ell$. There we have $D|_V=D_\ell|_V$, since a differential operator of order $\ell$ is completely determined by what it does on polynomials of degree $\leq l$. Consequently $Q_{\ell+i}|_V=Q_\ell|_V$ for all $i\geq 0$. This already implies that $Q=\lim_{\ell\to \infty} Q_\ell|_{\Omega^{k+1}_c(U)}$ is a support-decreasing operator $\Omega^{k+1}_c(U)\to \mathcal E$. Moreover $Q$ is continuous, since it is locally continuous, i.e. it is a differential operator.
\end{proof}

\bibliographystyle{plain}
\bibliography{bibliography}
\end{document}